\newtheorem{theorem}{Theorem}[section]
\newtheorem{lemma}[theorem]{Lemma}
\newtheorem{proposition}[theorem]{Proposition}
\newtheorem{corollary}[theorem]{Corollary}
\newtheorem{maintheorem}{Theorem}
\theoremstyle{definition}
\newtheorem{definition}[theorem]{Definition}
\newtheorem{remark}[theorem]{Remark}
\newcommand{\Q}{\mathbb{Q}}
\newcommand{\Qp}{\mathbb{Q}_p}
\newcommand{\Cp}{\mathbb{C}_p}
\newcommand{\Zp}{\mathbb{Z}_p}
\newcommand{\N}{\mathbb{N}}
\newcommand{\R}{\mathbb{R}}
\newcommand{\Z}{\mathbb{Z}}
\newcommand{\dd}{\mathrm{d}}
\newcommand{\ii}{\mathrm{i}}
\newcommand{\M}{\mathcal{M}}
\newcommand{\F}{\mathcal{F}}
\newcommand{\sphere}{\mathrm{S}^2_p}
\newcommand{\tr}{^{\mathrm{T}}}
\newcommand{\ocal}{\mathcal{O}}
\newcommand{\letnpos}{Let $n$ be a positive integer}
\newcommand{\letpprime}{Let $p$ be a prime number}
\renewcommand{\le}{\leqslant}
\renewcommand{\ge}{\geqslant}
\DeclareMathOperator{\ord}{ord}
\DeclareMathOperator{\DSq}{DSq}
\numberwithin{equation}{section}
\newenvironment{enumerate-roman}{\begin{enumerate}
		
	}{\end{enumerate}}
\newenvironment{enumerate-alph}{\begin{enumerate}
		
	}{\end{enumerate}}
\title{$p$-adic angular momentum coupling in symplectic geometry}
\author[Luis Crespo, \'Alvaro Pelayo]{Luis Crespo\,\,\,\,\,\, \'Alvaro Pelayo}
\address{Luis Crespo,
	Departamento de Matem\'{a}ticas, Estad\'{i}stica y Computaci\'{o}n, Universidad de Cantabria, Av.~de Los Castros 48, 39005 Santander, Spain}
\email{luis.cresporuiz@unican.es}
\address{\'Alvaro Pelayo,
	Facultad de Ciencias Matem\'aticas,
	Universidad Complutense de Madrid, 28040 Madrid, Spain, and Real Academia de Ciencias Exactas, F\'isicas y Naturales, Madrid, Spain}
\email{alvpel01@ucm.es}
\begin{document}
	
\begin{abstract}
	The coupled angular momentum is an integrable system with two degrees of freedom which is fundamental in physics and the theory of integrable systems. It is obtained by coupling two angular momenta. We construct a $p$-adic analog of this system for any prime number $p$ and describe its symplectic normal forms at the critical points. This analog has a rich singularity theory with up to thirteen non-equivalent symplectic normal forms, which stands in contrast with the real case where there are exactly three normal forms.
\end{abstract}

\maketitle

\section{Introduction}

The most interesting features of integrable systems are encoded in their critical points. The study of these points is also essential in the global symplectic classifications in terms of symplectic invariants of toric \cite{Atiyah,Delzant,GuiSte} and semitoric integrable systems \cite{PPT,PelVuN-semitoric,PelVuN-construct} as well as in the systematic study of convexity \cite{RWZ}. In the present paper we use symplectic geometry \emph{but with coefficients in the field of $p$-adic numbers $\Qp$}, where $p$ is any prime number, in order to study the critical points of the $p$-adic analog of a fundamental integrable system: the coupled angular momentum.
We recommend Lurie's lecture \cite{Lurie} and the book by Scholze-Weinstein \cite{SchWei} for introductions to $p$-adic geometry and the books by Hofer-Zehnder \cite{HofZeh} and McDuff-Salamon \cite{McDSal} for introductions to symplectic geometry.

The coupled angular momentum system consists of a pair of spin systems with an energy component which depends on a parameter $t$. This parameter intuitively specifies the degree of coupling of both systems. The same can be said in the $p$-adic case: we have two $p$-adic spin systems, which in some sense represent the $p$-adic equivalent of rotation, and which have a coupled energy in the $p$-adic space. Next we give, for any prime number $p$, a construction of the $p$-adic coupled angular momentum, and state our main theorems. These results give a full description of the symplectic singularity theory of this system.

\subsection{The $p$-adic coupled angular momentum: basic properties}

The \emph{(real) coupled angular momentum} is an integrable system depending on three parameters $t,R_1,R_2\in\R$, with $0\le t\le 1$ and $0<R_1<R_2,$ which results from coupling, in a non-trivial fashion, two angular momenta. It has been studied by Sadovskii and Zhilinskii \cite{SadZhi} and by Le Floch-Pelayo \cite{LeFPel} among other authors, and it is fundamental in physics and in symplectic geometry (see Figure \ref{fig:real-angular}). The following definition gives the construction of its $p$-adic analog, for any prime number $p$. Recall that a \emph{$p$-adic analytic integrable system} on a $p$-adic analytic symplectic manifold $(M,\omega)$ of dimension $4$ is a $p$-adic analytic map $F=(f_1,f_2):(M,\omega)\to(\Qp)^2$ such that $\{f_1,f_2\}=0$ and the set where $\dd f_1$ and $\dd f_2$ are linearly independent is a dense subset of $M$.

\begin{definition}[$p$-adic coupled angular momentum]\label{def:angular}
	\letpprime. Let $R_1,R_2\in\Qp\setminus\{0\}$. Let $\sphere$ be the $p$-adic sphere, that is,
	\begin{equation}\label{eq:sphere}
		\sphere=\Big\{(x,y,z)\in(\Qp)^3:x^2+y^2+z^2=1\Big\}.
	\end{equation}
	Let $(x_1,y_1,z_1,x_2,y_2,z_2)$ be the standard $p$-adic coordinates on the product $\sphere\times\sphere$ and let $\omega_1$ and $\omega_2$ be the standard $p$-adic analytic area forms on the first and second factor of this product, respectively, that is,
	$\omega_1=\frac{1}{z_1}\dd x_1\wedge\dd y_1(=\frac{1}{y_1}\dd z_1\wedge\dd x_1=\frac{1}{x_1}\dd y_1\wedge\dd z_1)$
	and $\omega_2=\frac{1}{z_2}\dd x_2\wedge\dd y_2$. Endow $\sphere\times\sphere$ with the symplectic form $R_1\omega_1\oplus R_2\omega_2$. Let
	\begin{equation}\label{eq:F}
		\mathcal{F}=\Big\{(t,R_1,R_2)\in\Zp\times(\Qp)^2:|R_2|_p>|R_1|_p>0\Big\}.
	\end{equation}
	Let $(t,R_1,R_2)\in\F$. The \emph{$p$-adic coupled angular momentum corresponding to the triple $(t,R_1,R_2)$} is the $p$-adic analytic integrable system $F_{t,R_1,R_2}:\sphere\times\sphere\to(\Qp)^2$ given by
	\[F_{t,R_1,R_2}(x_1,y_1,z_1,x_2,y_2,z_2)=\Big(J_{t,R_1,R_2}(x_1,y_1,z_1,x_2,y_2,z_2),H_{t,R_1,R_2}(x_1,y_1,z_1,x_2,y_2,z_2)\Big),\]
	where
	\begin{equation}\label{eq:angular}
		\left\{\begin{aligned}
			J_{t,R_1,R_2}(x_1,y_1,z_1,x_2,y_2,z_2) & =R_1z_1+R_2z_2; \\
			H_{t,R_1,R_2}(x_1,y_1,z_1,x_2,y_2,z_2) & =(1-t)z_1+t(x_1x_2+y_1y_2+z_1z_2).
		\end{aligned}\right.
	\end{equation}
	For brevity we refer to $F_{t,R_1,R_2}:\sphere\times\sphere\to(\Qp)^2$ as the \emph{$p$-adic coupled angular momentum}, without specifying the values of the parameters; with the understanding that for each $(t,R_1,R_2)$ we have a single $p$-adic analytic integrable system. We refer to Figure \ref{fig:padic-spheres} for a symbolic representation of the phase space of this system.
\end{definition}

\begin{figure}
	\includegraphics[width=0.7\linewidth,trim=5cm 3cm 5cm 3cm,clip]{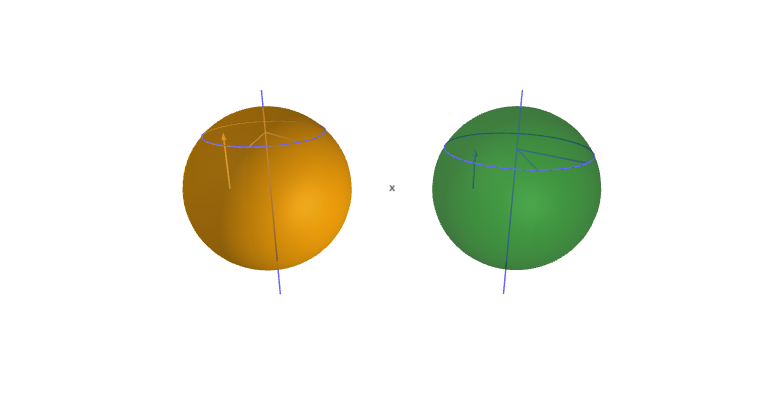}
	\caption{In the real case, the coupled angular momentum has four rank zero non-degenerate critical points corresponding to pairing the north and south poles of each of the two spheres. Three of these points are of elliptic-elliptic type, and the remaining one is of focus-focus type.}
	\label{fig:real-angular}
\end{figure}

In the theory of integrable systems (both real and $p$-adic) the most interesting points are the critical points of the system. Among them, we understand the best those which are non-degenerate in the following sense.

\begin{definition}[$p$-adic non-degenerate critical point {\cite[Definition 10.2]{CrePel-williamson}}]
	\letpprime. Let $(M,\omega)$ be a $p$-adic analytic symplectic manifold of dimension $4$, $F=(f_1,f_2):M\to(\Qp)^2$ a $p$-adic analytic integrable system and $m$ a critical point of $F$. Let $\Omega$ be the matrix of $\omega_m$. If $m$ has rank $0$, we say that $m$ is \emph{non-degenerate} if for every generic choice of $a_1,a_2\in\Qp$ the $p$-adic matrix
	$\Omega^{-1}(a_1\dd^2f_1+a_2\dd^2 f_2)$
	has $4$ different eigenvalues. If $m$ has rank $1$ and $\dd f_1=0$, we consider local linear symplectic coordinates $(x,\xi,y,\eta)$ centered at $m$ such that $\dd f_1=\dd \eta$. We say that $m$ is \emph{non-degenerate} if the first two rows and columns of $\Omega^{-1}\dd^2f_1$ form a nonsingular matrix.
\end{definition}

The following result gives a summary of the structure of the set of critical points of the $p$-adic coupled angular momentum system in \eqref{eq:angular}.

\begin{maintheorem}[Basic properties of the critical points of the $p$-adic coupled angular momentum]\label{thm:new-main}
	\letpprime. Let $\F$ be the parameter set in \eqref{eq:F} and let $(t,R_1,R_2)\in\F$. The $p$-adic coupled angular momentum $F_{t,R_1,R_2}:\sphere\times\sphere\to(\Qp)^2$ given in Definition \ref{def:angular}
	is indeed a $p$-adic analytic integrable system. For every choice of the parameters except at most two concrete values of $t$ (which depend on $R_1$ and $R_2$), every critical point of $F_{t,R_1,R_2}$ is non-degenerate. For every $(t,R_1,R_2)\in\F$, it holds that $F_{t,R_1,R_2}$ has exactly four rank $0$ critical points:
	\[\mathcal{Z}=\Big\{(0,0,1,0,0,1),(0,0,-1,0,0,1),(0,0,1,0,0,-1),(0,0,-1,0,0,-1)\}.\]
	For each $m\in\mathcal{Z}$ we define the set
	\[\mathcal{N}_m=\Big\{(g_1,g_2)\mid\exists(t,R_1,R_2)\in\F,(g_1,g_2)\text{ is local normal form of $F_{t,R_1,R_2}$ at $m$}\Big\}\]
	Let $\sim$ be the equivalence relation on $\mathcal{N}_m$ given by local symplectomorphism centered at $m$. Then the quotient set $\mathcal{N}_m/\sim$ has cardinality at most $9$. Furthermore:
	\begin{enumerate-roman}
		\item if $p=2$ then $|\mathcal{N}_m|=1$ for every $m\in\mathcal{Z}$;
		\item if $p\equiv 1\mod 4$ then for each $j\in\{1,5\}$ there is $m_j\in\mathcal{Z}$ with $|\mathcal{N}_{m_j}|=j$;
		\item if $p\equiv 3\mod 4$ then for each $j\in\{3,9\}$ there is $m_j\in\mathcal{Z}$ with $|\mathcal{N}_{m_j}|=j$.
	\end{enumerate-roman}
	Moreover, for every $(t,R_1,R_2)\in \F$, the rank $1$ critical points of $F_{t,R_1,R_2}$ are non-degenerate and have two degrees of freedom. Their images by $F_{t,R_1,R_2}$ form a curve $\gamma_{t,R_1,R_2}:\Qp\to(\Qp)^2$ which depends on $t$, $R_1$ and $R_2$. Furthermore, given any rank $1$ $p$-adic local normal form $(g_1,g_2)$, there exists a triple of parameters $(t,R_1,R_2)\in\F$ and a rank $1$ critical point of the system $F_{t,R_1,R_2}$ whose local normal form is $(g_1,g_2)$.
\end{maintheorem}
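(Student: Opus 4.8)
The plan is to treat the assertions of the theorem in the order in which they build on one another, starting from the differential geometry and ending with the arithmetic classification. \emph{Integrability and the rank $0$ points.} First I would check that $F_{t,R_1,R_2}$ is a $p$-adic analytic integrable system: analyticity is immediate since $J$ and $H$ are restrictions of polynomials and the forms $\omega_1,\omega_2$ are analytic on the three standard charts covering $\sphere$ (using the equivalent expressions for $\omega_i$ given in Definition \ref{def:angular}). I would compute the Poisson bracket $\{J,H\}$ directly on a chart; its vanishing reflects the fact that $J$ generates the diagonal rotation of the two spheres about the $z$-axis while $H$ is invariant under it. Linear independence of $\dd J,\dd H$ on a dense set then follows because the locus where it fails is a proper closed analytic subvariety. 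To locate the rank $0$ points I would work in coordinates $(x_i,y_i)$ with $z_i=\sqrt{1-x_i^2-y_i^2}$: since $R_1,R_2\neq0$, the equation $\dd J=0$ already forces $x_i=y_i=0$, so both factors sit at poles, and one checks $\dd H=0$ holds automatically there. This produces exactly the four points of $\mathcal Z$.

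\emph{Non-degeneracy and normal-form count at rank $0$.} At each $m\in\mathcal Z$ I would write the matrix $\Omega$ of the symplectic form together with the Hessians $\dd^2J,\dd^2H$, form the pencil $\Omega^{-1}(a_1\dd^2J+a_2\dd^2H)$, and compute its characteristic polynomial. Its four roots are distinct for generic $(a_1,a_2)$ away from the zero locus of a discriminant, and this discriminant degenerates only for at most two values of $t$ (depending on $R_1,R_2$), which is the claimed exceptional set. For the counting I would invoke the $p$-adic Williamson classification of \cite{CrePel-williamson}: the symplectic type at $m$ is pinned down by the eigenvalue data together with square-class invariants in $\Qp^\times/(\Qp^\times)^2$. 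Translating the eigenvalues computed above into these invariants, and using that this group has order $4$ for odd $p$ (with $-1$ a square precisely when $p\equiv1\bmod4$) and order $8$ for $p=2$, should yield the cardinalities $1,3,5,9$ for $|\mathcal N_m|$ asserted in (i)--(iii) and the bound $|\mathcal N_m/\!\sim|\le9$ for the quotient.

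\emph{Rank $1$ points.} For the final assertion I would describe the rank $1$ critical set as the locus where $\dd J\wedge\dd H=0$ but $(\dd J,\dd H)\neq(0,0)$; eliminating variables shows that away from $\mathcal Z$ this is a one-dimensional analytic set, and applying $F_{t,R_1,R_2}$ to it produces the curve $\gamma_{t,R_1,R_2}$, whose dependence on $(t,R_1,R_2)$ one exhibits from the explicit image. Non-degeneracy I would verify with the rank $1$ criterion of the definition: pass to local symplectic coordinates $(x,\xi,y,\eta)$ with $\dd f_1=\dd\eta$ and confirm that the upper-left $2\times2$ block of $\Omega^{-1}\dd^2f_1$ is nonsingular along the locus, which holds for every parameter value (unlike the rank $0$ case). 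For realizability I would compute the rank $1$ normal form, whose invariant is again a square class attached to the transverse Hessian, as an explicit function of $(t,R_1,R_2)$ and of position along the critical curve, and then prove that this assignment surjects onto the list of admissible rank $1$ $p$-adic normal forms.

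\emph{Main obstacle.} I expect the crux to lie in the two arithmetic steps. For rank $0$, the difficulty is organizing the eigenvalue and square-class data so that the eigenvalue collisions producing the exact counts $1,3,5,9$ are accounted for, and verifying that the collapse to a single class when $p=2$ is genuine rather than an artifact. For the rank $1$ statement, the hard part is the surjectivity claim: one must show that every admissible transverse square class is actually attained by some $(t,R_1,R_2)\in\F$ and some point of $\gamma_{t,R_1,R_2}$, which reduces to solving a system of $p$-adic square-representability conditions in the three parameters.
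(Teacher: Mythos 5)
Your opening steps (integrability via a direct Poisson bracket computation, and locating the rank $0$ points by observing that $\dd J=0$ forces $x_i=y_i=0$, hence poles) match the paper's Propositions \ref{prop:poisson-cam} and \ref{prop:rank0}, and your pencil/discriminant idea for non-degeneracy is sound in principle. But the core of the theorem is the exact counts $1,5$ (for $p\equiv 1\bmod 4$), $3,9$ (for $p\equiv 3\bmod 4$) and $1$ (for $p=2$), and here your proposal has a genuine gap: the counting mechanism you describe cannot produce these numbers. First, you treat the four points of $\mathcal Z$ uniformly, whereas the entire structure of the answer comes from the dichotomy between $P,Q$ (where $z_2=1$) and $S,T$ (where $z_2=-1$). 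At $P,Q$ the discriminant of the relevant characteristic polynomial is dominated by $-(k+tz_1)^2$, so the eigenvalues always lie in $\Qp[\ii]$ with $\lambda^2,\mu^2\in\Qp$, and only class (1) forms occur (giving $1$ or $3$ forms). At $S,T$ the competing term $-4kt(1-t)z_1$ forces a decomposition of parameter space into outer, inner and limit regions (Definition \ref{def:regions}); in the inner and limit regions the eigenvalues move into $\Qp[\ii,\sqrt{k}]$ or $\Qp[\ii,\sqrt{\Delta}]$, which is what makes class (2) and class (3) forms appear and yields the larger counts. Without this decomposition the asymmetry between the points, and the collapse at $p=2$ (which happens precisely because $\ord_2(2t-1)=0$ puts all parameters in the outer region), cannot be explained; note that your appeal to the order of $\Qp^\times/(\Qp^\times)^2$ points the wrong way for $p=2$, where the group is \emph{largest} (order $8$) yet the count is $1$. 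Second, the sets $\mathcal N_m$ range over \emph{all} $(t,R_1,R_2)\in\F$, so they contain the normal forms at the degenerate parameter values as well: the counts $5$ and $9$ are $4+1$ and $8+1$, where the extra form is the degenerate type R3 (for $p\equiv1\bmod4$) or I3$(-1)$ (for $p\equiv3\bmod4$) arising when $\Delta(k,t,z_1)=0$ (Lemma \ref{lemma:eigenvectors4-degenerate}). A square-class count of non-degenerate types alone, as you propose, tops out at $4$ and $8$.

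For the rank $1$ surjectivity you correctly identify the difficulty but offer no mechanism to resolve it, and this step does not reduce to abstract ``square-representability conditions'': the paper's proof (Proposition \ref{prop:rank1-universal}) requires an explicit asymptotic choice of parameters --- $c$ of $p$-adic order $3$ or $4$, $t$ of order $2$, and $k$ of very negative order --- under which the invariant $f(c,k,t)$ is, up to squares, $c/2$, so that varying $c$ over orders $3$ and $4$ sweeps all square classes; one then fixes $k$ and varies $R_1$ to realize both elements of $X_p$ within each square class via the condition $rR_1(z_1^2-1)\in\DSq(\Qp,c')$. Your plan, as written, would stall exactly at the two places you flag as the crux; to complete it you would need to rediscover the region decomposition, the degenerate limit forms, and the explicit parameter scaling above, which together constitute the actual content of Sections \ref{sec:ST-outer}--\ref{sec:rank1} of the paper.
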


In contrast with Theorem \ref{thm:new-main}, in the real case the coupled angular momentum has four rank zero isolated singularities. Their type depends on the values of parameters but can only be of focus-focus type
$(x,\xi,y,\eta)\longmapsto(x\eta-y\xi,x\xi+y\eta)$
or of elliptic-elliptic type
$(x,\xi,y,\eta)\longmapsto\left(\frac{x^2+\xi^2}{2},\frac{y^2+\eta^2}{2}\right)$. It also has infinitely many non-isolated rank one singularities. See Figure \ref{fig:real-angular} for a representation.

Theorem \ref{thm:new-main} is a consequence of the stronger results Theorems \ref{thm:number}, \ref{thm:main}, \ref{thm:number1}, \ref{thm:main1}. In these results the cardinality of $\mathcal{N}_m$ is computed more precisely, and we give the explicit expression of the normal forms, which also makes the statements more technical.

\subsection{Classification of rank $0$ singularities}

In order to formulate our results first we need to recall the $p$-adic Weierstrass-Williamson classification in dimension $4$ \cite[Theorem A]{CrePel-williamson}. For each prime number $p$, let $c_0$ be the smallest quadratic non-residue modulo $p$. Following \cite[Definition 1.1]{CrePel-williamson}, we consider the sets:
\[X_p=\begin{cases}
	\{1,c_0,p,c_0p,(c_0)^2p,(c_0)^3p,c_0p^2\} & \text{if }p\equiv 1\mod 4; \\
	\{1,-1,p,-p,p^2\} & \text{if }p\equiv 3\mod 4; \\
	\{1,-1,2,-2,3,-3,6,-6,12,-18,24\} & \text{if }p=2.
\end{cases}\]
\[Y_p=\begin{cases}
	\{c_0,p,c_0p\} & \text{if }p\equiv 1\mod 4; \\
	\{-1,p,-p\} & \text{if }p\equiv 3\mod 4; \\
	\{-1,2,-2,3,-3,6,-6\} & \text{if }p=2,
\end{cases}\]
and the functions $\mathcal{C}_i^k:Y_p\times(\Qp)^4\to\Qp$ and $\mathcal{D}_i^k:Y_p\times(\Qp)^4\to\Qp$, for $k\in\{1,2\}$, $i\in\{0,1,2\}$, defined by:
\begin{align*}
	&\mathcal{C}_0^1(c,t_1,t_2,a,b)=\frac{ac}{2(c-b^2)},
	&&\mathcal{C}_1^1(c,t_1,t_2,a,b)=\frac{b}{b^2-c}, \\
	&\mathcal{C}_2^1(c,t_1,t_2,a,b)=\frac{1}{2a(c-b^2)},
	&&\mathcal{C}_0^2(c,t_1,t_2,a,b)=\frac{abc}{2(b^2-c)},\\
	&\mathcal{C}_1^2(c,t_1,t_2,a,b)=\frac{c}{c-b^2},
	&&\mathcal{C}_2^2(c,t_1,t_2,a,b)=\frac{b}{2a(b^2-c)},\\
	&\mathcal{D}_0^1(c,t_1,t_2,a,b)=-\frac{t_1+bt_2}{2a},
	&&\mathcal{D}_1^1(c,t_1,t_2,a,b)=-bt_1-ct_2,\\
	&\mathcal{D}_2^1(c,t_1,t_2,a,b)=-\frac{ac(t_1+bt_2)}{2},
	&&\mathcal{D}_0^2(c,t_1,t_2,a,b)=-\frac{bt_1+ct_2}{2a},\\
	&\mathcal{D}_1^2(c,t_1,t_2,a,b)=-c(t_1+bt_2),
	&&\mathcal{D}_2^2(c,t_1,t_2,a,b)=-\frac{ac(bt_1+ct_2)}{2}.
\end{align*}

\begin{definition}[Class (1), (2) or (3) normal forms]\label{def:classes}
	By \cite[Theorem A]{CrePel-williamson}, the local normal form of a $p$-adic analytic integrable system on a $4$-dimensional $p$-adic analytic symplectic manifold at a rank $0$ non-degenerate critical point is of the form $(g_1,g_2)$, where $g_1$ and $g_2$ can take one of the following three possible forms:
	\begin{enumerate}
		\item $g_1(x,\xi,y,\eta)=x^2+c_1\xi^2,g_2(x,\xi,y,\eta)=y^2+c_2\eta^2$, for some $c_1,c_2\in X_p$;
		\item $g_1(x,\xi,y,\eta)=x\eta+cy\xi,g_2(x,\xi,y,\eta)=x\xi+y\eta$, for some $c\in Y_p$;
		\item
		\[g_k(x,\xi,y,\eta)=\sum_{i=0}^{2}\mathcal{C}_i^k(c,t_1,t_2,a,b)x^iy^{2-i}+\sum_{i=0}^{2}\mathcal{D}_i^k(c,t_1,t_2,a,b)\xi^i\eta^{2-i},\]
		for $k\in\{1,2\}$, where $c$, $t_1$, $t_2$, $a$ and $b$ can take a fixed set of possible values (see \cite[Table 1]{CrePel-williamson} for the full specification).
	\end{enumerate}
	Here $(x,\xi,y,\eta)$ are $p$-adic local symplectic coordinates centered at the critical point of the $p$-adic analytic integrable system.
	We say that each of these local normal forms at a rank $0$ non-degenerate critical point are of \emph{class (1), (2) or (3)} respectively, depending on which of the three cases above they belong to. We denote each one of these local normal forms by its parameters: those of class (1) are denoted by $(c_1,c_2)$, those of class (2) by $c$, and those of class (3) by $(c,t_1,t_2,a,b)$.
\end{definition}

In our paper \cite{CrePel-williamson} we spoke of linear symplectic coordinates, but in view of our later paper \cite{CrePel-Darboux} our statements therein are also valid in a neighborhood of the point. Hence why in Theorem \ref{thm:new-main} and Definition \ref{def:classes} we speak of local symplectic coordinates and not of linear symplectic coordinates. We make this precise in Lemma \ref{lemma:darboux}.

\begin{definition}[Class R3 and I3$(c)$ normal forms]\label{ref:classes-deg}
	By \cite[Theorem 10.6]{CrePel-williamson}, the local normal form of a $p$-adic analytic integrable system on a $4$-dimensional $p$-adic analytic symplectic manifold at a rank $0$ degenerate critical point is of the form $(g_1,g_2)$, where $g_1$ and $g_2$ can take several possible forms. The ones which are interesting to us for the present paper are the class R3, given by $(x\xi+y\eta,y\xi)$, and the class I3$(c)$, where $c\in Y_p$, given by
	\[\left(x\eta+cy\xi+\frac{(1+c)y^2}{2},\frac{x^2-c\xi^2}{2}\right).\]
\end{definition}

\begin{definition}[$p$-adic local symplectomorphism {\cite[Definition 9.1]{CrePel-williamson}}]
	\letpprime. Let $(M_1,\omega_1)$ and $(M_2,\omega_2)$ be $p$-adic analytic symplectic manifolds. Let $m\in M_1$. A \emph{$p$-adic local symplectomorphism} $\phi:U_1\to U_2$ \emph{centered at $m$} is a $p$-adic analytic symplectomorphism between some open sets $U_1\subset M_1$ and $U_2\subset M_2$, such that $m\in U_1$.
\end{definition}

Now we are ready to state our main results about the rank $0$ singularities of coupled angular momenta.

\begin{maintheorem}[Number of normal forms at rank $0$ singularities]\label{thm:number}
	\letpprime. Let $\F$ be the parameter set in \eqref{eq:F} and let $(t,R_1,R_2)\in\F$. The $p$-adic coupled angular momentum $F_{t,R_1,R_2}=(J_{t,R_1,R_2},H_{t,R_1,R_2}):\sphere\times\sphere\to(\Qp)^2$ given by \eqref{eq:angular}
	is a $p$-adic analytic integrable system with exactly four rank $0$ critical points:
	\begin{equation}\label{eq:PQST}
		\left\{\begin{aligned}
			P & =(0,0,1,0,0,1); \\
			Q & =(0,0,-1,0,0,1); \\
			S & =(0,0,1,0,0,-1); \\
			T & =(0,0,-1,0,0,-1),
		\end{aligned}\right.
	\end{equation}
	which are non-degenerate for almost all values of the parameters. Furthermore:
	\begin{enumerate-roman}
		\item \textup{(Number of non-equivalent local normal forms, $p\not\equiv 3\mod 4$ at $P$ or $Q$)} If $p\not\equiv 3\mod 4$, the local normal form of $F_{t,R_1,R_2}$ at $P$ is non-degenerate and unique up to $p$-adic local symplectomorphisms centered at the critical point, independently of the parameters $t$, $R_1$ and $R_2$. The same statement holds when replacing $P$ by $Q$.
		\item \textup{(Number of non-equivalent local normal forms, $p\equiv 3\mod 4$ at $P$ or $Q$)} If $p\equiv 3\mod 4$, there are exactly three non-degenerate local normal forms of $F_{t,R_1,R_2}$ at $P$ up to $p$-adic local symplectomorphisms centered at the critical point. The same statement holds when replacing $P$ by $Q$.
		\item \textup{(Number of non-equivalent local normal forms, $p=2$ at $S$ or $T$)} If $p=2$, the local normal form of $F_{t,R_1,R_2}$ at $S$ is non-degenerate and unique up to $p$-adic local symplectomorphisms centered at the critical point, independently of the parameters $t$, $R_1$ and $R_2$. The same statement holds when replacing $S$ by $T$.
		\item \textup{(Number of non-equivalent local normal forms, $p\equiv 1\mod 4$ at $S$ or $T$)} If $p\equiv 1\mod 4$, there are exactly $4$ non-degenerate local normal forms and one degenerate local normal form of $F_{t,R_1,R_2}$ at $S$ up to $p$-adic local symplectomorphisms centered at the critical point. The same statement holds when replacing $S$ by $T$.
		\item \textup{(Number of non-equivalent local normal forms, $p\equiv 3\mod 4$ at $S$ or $T$)} If $p\equiv 3\mod 4$, there are exactly $8$ non-degenerate local normal forms and one degenerate local normal form of $F_{t,R_1,R_2}$ at $S$ up to $p$-adic local symplectomorphisms centered at the critical point. The same statement holds when replacing $S$ by $T$.
	\end{enumerate-roman}
\end{maintheorem}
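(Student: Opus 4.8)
The plan is to reduce the classification at each of the four critical points to a spectral problem for a single $2\times2$ matrix, read off the class and invariants from the $p$-adic Weierstrass--Williamson classification of \cite{CrePel-williamson} recalled in Definitions \ref{def:classes} and \ref{ref:classes-deg}, and then track how the relevant $p$-adic square classes vary over $\F$. First I would localize: using Lemma \ref{lemma:darboux} I introduce $p$-adic Darboux coordinates near each pole-pair, writing $(x_i,y_i)$ as local coordinates and $z_i = s_i\sqrt{1-x_i^2-y_i^2}$ with $s_i=\pm1$ recording the pole, so that $\omega_i$ becomes $s_i\,\dd x_i\wedge\dd y_i$ to leading order. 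Expanding $J$ and $H$ then yields the Hessians $\dd^2 J$, $\dd^2 H$ and the symplectic matrix $\Omega$. Reordering the coordinates as $(x_1,x_2,y_1,y_2)$ makes both Hessians block-diagonal with two identical $2\times2$ blocks, so the whole pencil is governed by the matrices $M_J := D^{-1}B_J$ and $M_H := D^{-1}B_H$, where $D=\mathrm{diag}(s_1R_1,s_2R_2)$ and $B_J,B_H$ are the common position blocks. A short computation gives $M_J=-\mathrm{Id}$ at all four points, while
\[\det M_H=\frac{s_1\,t(1-t)}{R_1R_2},\qquad \mathrm{tr}\,M_H=\frac{-(1-t)-ts_2}{R_1}-\frac{ts_1}{R_2}.\]

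Next I classify. Since $M_J$ is scalar, a generic member $a_1\,\dd^2 J+a_2\,\dd^2 H$ of the pencil has eigenvalues $\pm\mu_j\sqrt{-1}$ with $\mu_j=-a_1+a_2\nu_j$ and $\nu_j$ the eigenvalues of $M_H$; hence non-degeneracy holds exactly when $M_H$ has distinct eigenvalues, i.e.\ when the discriminant $\Delta_\nu$ of $\det(B_H-\nu D)$ is nonzero, which fails for at most two values of $t$, as in Theorem \ref{thm:new-main}. The dichotomy between classes (1) and (2) of Definition \ref{def:classes} is decided by whether $\Delta_\nu$ is a square in $\Qp$ (equivalently whether $\nu_1,\nu_2\in\Qp$), while the split between elliptic and hyperbolic blocks is governed by whether $\sqrt{-1}\in\Qp$, that is by $p\bmod 4$. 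The structural fact separating $\{P,Q\}$ from $\{S,T\}$ is a valuation estimate: at $P$ and $Q$ the leading term of $\Delta_\nu$ is $(R_2\mp tR_1)^2$, whose valuation strictly dominates that of the remaining term because $|R_2|_p>|R_1|_p$ and $t\in\Zp$; hence for odd $p$ the quantity $\Delta_\nu$ is automatically a square and never vanishes, pinning these points to class (1). At $S$ and $T$ the coefficient $(1-2t)$ of $R_2$ may be a non-unit, so $\Delta_\nu$ realizes several square classes --- including non-squares, which produce class (2) focus--focus forms, and, for odd $p$, a value of $t$ at which $\Delta_\nu$ vanishes, producing a degenerate form of class R3 or I3$(c)$ from Definition \ref{ref:classes-deg}.

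Finally I count. For each point I compute the block invariants $c_1,c_2\in X_p$ (class (1)) or the focus--focus invariant $c\in Y_p$ (class (2)) as explicit functions of $(t,R_1,R_2)$, reduce them to the distinguished representatives of \cite{CrePel-williamson}, and determine exactly which representatives occur as $(t,R_1,R_2)$ ranges over $\F$. Combining the square-class computation with the value of $p\bmod 4$ gives the stated cardinalities: at $P$ and $Q$ the invariants are forced to a single value when $p\not\equiv 3\pmod 4$ and to three values when $p\equiv 3\pmod 4$; at $S$ and $T$ one obtains a single form for $p=2$, four non-degenerate forms plus one degenerate form for $p\equiv 1\pmod 4$, and eight non-degenerate forms plus one degenerate form for $p\equiv 3\pmod 4$.

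I expect the main obstacle to be precisely this last surjectivity-type analysis. Showing which elements of $X_p$ and $Y_p$ are attained amounts to solving, under the constraints $|R_2|_p>|R_1|_p>0$ and $t\in\Zp$, a family of $p$-adic square-class equations: one must both exhibit parameters realizing each prescribed class and rule out the remaining classes by valuation and Hensel-type obstructions. The prime $p=2$ is the most delicate, since square classes in $\mathbb{Q}_2$ are detected only modulo $8$ and the sets $X_2$, $Y_2$ are correspondingly larger, so the Hensel lifting used for odd $p$ must be replaced by explicit residue computations; this is the mechanism by which the count collapses to a single normal form for $p=2$.
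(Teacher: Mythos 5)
Your reduction to the $2\times 2$ block $M_H=D^{-1}B_H$ is correct and is equivalent to the paper's setup: your trace and determinant formulas match Proposition \ref{prop:hessians} and Lemma \ref{lemma:eigenvectors} (your $\nu_j$ are the paper's $\lambda,\mu$ divided by $\ii R_2$, and your $\Delta_\nu$ is $-1/R_2^2$ times the discriminant used there), your valuation argument at $P$ and $Q$ is exactly the content of Lemma \ref{lemma:eigenvalues}, and your non-degeneracy criterion ($\nu_1\ne\nu_2$) is in fact cleaner than the paper's, since it covers $t\in\{0,1\}$ uniformly where the paper needs the separate arguments of Section \ref{sec:01}.

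The gap is in the classification step, and it breaks statements (ii) and (v). You assert that non-square $\Delta_\nu$ produces class (2) focus--focus forms, and your counting step only ever computes class-(1) invariants $(c_1,c_2)$ and class-(2) invariants $c\in Y_p$. But for $p\equiv 3\bmod 4$ the four eigenvalues are $\pm\ii\nu_1,\pm\ii\nu_2$ with $\nu_{1,2}=(\mathrm{tr}\,M_H\pm\sqrt{\Delta_\nu})/2$, so when $\ord_p(\Delta_\nu)$ is odd (which does happen at $S$ and $T$: in the paper's inner region whenever $\ord_p(k)$ is odd, and in the limit region for suitable parameters) the eigenvalues generate the biquadratic extension $\Qp[\ii,\sqrt{p}]$, and the resulting normal form is of class (3), not class (2); class (2) with $c=\pm p$ can never occur here, because the eigenvalues always split into two pairs of the form $\pm\ii\nu$. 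With only classes (1) and (2) available, your mechanism can realize at most $3+1=4$ non-degenerate forms at $S,T$ for $p\equiv 3\bmod 4$ (the three class-(1) forms plus focus--focus $c=-1$), so the count of $8$ you assert is unreachable; the missing four are the class-(3) forms $(p,-1,0,1,0)$, $(p,-1,0,1,1)$, $(-p,-1,0,1,0)$, $(-p,-1,0,1,1)$ of Theorem \ref{thm:main}(ii)(c). A second, related omission: the square class of $\Delta_\nu$ alone cannot separate $(p,-1,0,1,0)$ from $(p,-1,0,1,1)$ (they have identical eigenvalue data), nor $(1,1)$ from $(1,p^2)$ from $(p^2,p^2)$ inside class (1); for this one needs the eigenvector pairing $v\tr\Omega\bar{v}$ and the criteria of \cite[Propositions 4.6 and 5.2]{CrePel-williamson}, which is precisely what the paper does in Lemmas \ref{lemma:eigenvectors-lambda}, \ref{lemma:eigenvectors-mu}, \ref{lemma:eigenvectors3} and \ref{lemma:eigenvectors4}. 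This refinement cannot be avoided: both members of each class-(3) pair genuinely occur (the inner region produces $(\pm p,-1,0,1,0)$, the limit region $(\pm p,-1,0,1,1)$), and at $P,Q$ the three class-(1) forms are distinguished exactly by the parities of $\ord_p(R_1)$ and $\ord_p(R_2)$, which only the eigenvector criterion detects.
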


\begin{maintheorem}[Description of linear normal forms at rank $0$ singularities]\label{thm:main}
	\letpprime. Let $\F$ be the parameter set in \eqref{eq:F} and let $(t,R_1,R_2)\in\F$. Let $F_{t,R_1,R_2}=(J_{t,R_1,R_2},H_{t,R_1,R_2})\in\sphere\times\sphere\to(\Qp)^2$ be the $p$-adic coupled angular momentum given by \eqref{eq:angular}. Let $P,Q,S,T$ be the points given in \eqref{eq:PQST}. Then the following statements hold.
	\begin{enumerate-roman}
		\item The points $P$ and $Q$ are in the type $(1,1)$ of class (1) (i.e. the local normal form is $(x^2+\xi^2,y^2+\eta^2)$) if $p\equiv 1\mod 4$ or $p=2$, and in one of the types
		\[(1,1),\;(1,p^2),\;(p^2,p^2)\]
		of class (1) if $p\equiv 3\mod 4$.
		\item Let $m$ be either $S$ or $T$. Then:
		\begin{enumerate}
			\item If $p=2$, $m$ is always in the type $(1,1)$ of class (1).
			\item If $p\equiv 1\mod 4$ and $m$ is non-degenerate, it is in the type $(1,1)$ of class (1) or any type of class (2); if it is degenerate, it is in the type \emph{R3}.
			\item If $p\equiv 3\mod 4$ and $m$ is non-degenerate, it is in one of the types $(1,1)$, $(1,p^2)$ or $(p^2,p^2)$ of class (1), the type $1$ of class (2), or the types
			\[(p,-1,0,1,0),\;(p,-1,0,1,1),\;(-p,-1,0,1,0),\;(-p,-1,0,1,1)\]
			of class (3); if it is degenerate, it is in the type \emph{I3$(-1)$}.
		\end{enumerate}
	\end{enumerate-roman}
\end{maintheorem}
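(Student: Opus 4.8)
The plan is to reduce the normal-form computation at each of the points $P,Q,S,T$ of \eqref{eq:PQST} to a single $2\times 2$ spectral problem, and then to read off the class of Definition \ref{def:classes} from the $p$-adic arithmetic of a discriminant. I would first fix convenient local symplectic coordinates: by Lemma \ref{lemma:darboux} I may work near each pole in coordinates with $z_i=\epsilon_i\sqrt{1-x_i^2-y_i^2}$, where $\epsilon_i\in\{+1,-1\}$ records north versus south, so that $P,Q,S,T$ correspond to $(\epsilon_1,\epsilon_2)=(+,+),(-,+),(+,-),(-,-)$. Since $\dd^2 z_i=-\epsilon_i(\dd x_i^2+\dd y_i^2)$ at the pole, the Hessians $\dd^2 J$ and $\dd^2 H$ become explicit quadratic forms in $(x_1,y_1,x_2,y_2)$ whose only off-diagonal couplings are the terms $t\,x_1x_2$ and $t\,y_1y_2$ produced by $x_1x_2+y_1y_2$. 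Invariance of the system under simultaneous rotation of the two factors forces these Hessians to act identically on the $(x_1,x_2)$- and $(y_1,y_2)$-blocks; hence, ordering the coordinates as $(x_1,x_2,y_1,y_2)$, the matrix $A(a_1,a_2)=\Omega^{-1}(a_1\dd^2 J+a_2\dd^2 H)$ is block anti-diagonal, $A=\left(\begin{smallmatrix}0&-N\\ N&0\end{smallmatrix}\right)$, with characteristic polynomial $\prod_{\nu\in\mathrm{spec}(N)}(\lambda^2+\nu^2)$. Thus the whole classification collapses to the $2\times 2$ matrix $N=-a_1 I+a_2 N_H$, where $N_H$ is the reduction of $\Omega^{-1}\dd^2 H$.

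Because $N$ depends on the pencil parameter only through the harmless shift $-a_1 I$, the way $\mathrm{spec}(N)$ splits over $\Qp$ — and therefore, by \cite[Theorem A]{CrePel-williamson}, the class of the normal form — is governed by $\det N_H=\tfrac{t(1-t)}{\epsilon_1 R_1R_2}$ and by the discriminant $\Delta=(\mathrm{tr}\,N_H)^2-4\det N_H$. A direct computation gives $\Delta=D/(R_1R_2)^2$ with
\[D_P=(R_2-tR_1)^2+4t^2R_1R_2,\qquad D_Q=(R_2+tR_1)^2-4t^2R_1R_2,\]
\[D_S=(2t-1)^2R_2^2+t^2R_1^2-2tR_1R_2,\qquad D_T=(2t-1)^2R_2^2+t^2R_1^2+2tR_1R_2.\]
Non-degeneracy of the critical point is equivalent to $\Delta\neq 0$, and the at most two exceptional values of $t$ in Theorem \ref{thm:new-main} are exactly the roots of the relevant $D=0$.

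Third, I would run the arithmetic under the standing hypothesis $|R_2|_p>|R_1|_p>0$ and $t\in\Zp$. At $P$ and $Q$ the term $(R_2\mp tR_1)^2$ has strictly smallest valuation, because $|tR_1|_p\le|R_1|_p<|R_2|_p$; hence $D_P,D_Q$ are squares, $N_H$ is diagonalisable over $\Qp$, and the normal form is of class (1). A Newton-polygon argument shows its two eigenvalues have valuations $-\ord_p(R_1)$ and $-\ord_p(R_2)$ for generic $t$, so the type $(c_1,c_2)$ is fixed by the parities of $\ord_p(R_1),\ord_p(R_2)$. Matching against $X_p$ via \cite[Theorem A]{CrePel-williamson} — where for $p\equiv 1\bmod 4$ (and for $p=2$) the trivial square class has the single representative $1$, while for $p\equiv 3\bmod 4$ it splits into $\{1,p^2\}$ — yields type $(1,1)$ in the first case and exactly $(1,1),(1,p^2),(p^2,p^2)$ in the second (using $(c_1,c_2)\sim(c_2,c_1)$). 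At $S$ and $T$ the dominant term of $D_S$ (resp.\ $D_T$) is instead $(1-2t)^2R_2^2$; when $\ord_p(1-2t)=0$ — automatic for $p=2$, and the case $t\not\equiv\tfrac12\pmod p$ for odd $p$ — this is a square, so $D_S$ is a square and the point is class (1) of type $(1,1)$, giving statement (ii)(a). When $p$ is odd and $t\equiv\tfrac12\pmod p$ the leading square collapses and $D_S$ is dominated by $-2tR_1R_2$, so the square class and valuation parity of $-R_1R_2$ take over; this is the source of the focus-focus forms of class (2) and, for $p\equiv 3\bmod 4$, of the class (3) forms listed in (ii)(c), while the boundary values of $t$ (the roots of $D_S=0$) give the degenerate forms \emph{R3} for $p\equiv 1$ and \emph{I3}$(-1)$ for $p\equiv 3$, according to whether $\sqrt{-1}\in\Qp$.

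The hard part will be this last step at $S$ and $T$ for odd $p$ with $t$ in the residue disc of $\tfrac12$. There it is not enough to decide whether $\Delta$ is a square: one must transport the pair of Hessians into the explicit models of Definitions \ref{def:classes} and \ref{ref:classes-deg} to pin down the exact invariants $c\in Y_p$ and $(c,t_1,t_2,a,b)$, and then prove both that every listed type is realised and that no unlisted type can occur. This demands careful valuation bookkeeping of $\det N_H$, $\mathrm{tr}\,N_H$ and $\Delta$ as $t$ ranges over this disc and $R_1,R_2$ over their valuation classes subject to $\ord_p(R_2)<\ord_p(R_1)$, together with a comparison of the quadratic extensions $\Qp(\sqrt{-1})$ and $\Qp(\sqrt{\Delta})$ — precisely the point at which the dichotomy $p\equiv 1$ versus $p\equiv 3\bmod 4$ enters. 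I expect this case analysis, rather than the structural reduction of the first paragraphs, to be the bulk of the work.
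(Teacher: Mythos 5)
Your reduction to a $2\times 2$ pencil and the discriminants $D_P,D_Q,D_S,D_T$ reproduce, up to the factor $-R_1^2$, exactly the quadratic $x^2+\ii(k(tz_2-t+1)+tz_1)x+kt(t-1)z_1$ of Lemma \ref{lemma:eigenvectors}, so your skeleton is the paper's. The first genuine gap is the inference ``eigenvalue data $\Rightarrow$ type''. The normal form is only defined up to post-composition by a matrix $B\in\M_2(\Qp)$, and replacing $H$ by $pH$ (a change absorbed into $B$) alters nothing about the type while shifting every eigenvalue valuation of $\Omega^{-1}\dd^2H$ by $1$; hence no parity-of-valuation or Newton-polygon argument on eigenvalues can determine $c_i$. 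Eigenvalues determine only the square class of $c_i$, and for $p\equiv 3\mod 4$ the set $X_p$ contains two elements, $1$ and $p^2$, lying in the same square class. Separating them requires the scale-invariant eigenvector criterion of \cite[Proposition 4.6]{CrePel-williamson} (compute $v\tr\Omega\bar v$ for an eigenvector $v$ and test membership in $\DSq(\Qp,\cdot)$), which is what the paper does in Lemmas \ref{lemma:eigenvectors-lambda}, \ref{lemma:eigenvectors-mu}, \ref{lemma:eigenvectors-lambda2}, \ref{lemma:eigenvectors-mu2}, \ref{lemma:eigenvectors3} and \ref{lemma:eigenvectors4}; likewise pinning the class (3) data $(a,b)$ needs \cite[Proposition 5.2]{CrePel-williamson}. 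Since Theorem \ref{thm:main} only lists the possible types, this overreach is partly harmless for part (i), but it yields a concretely false claim in part (ii): for odd $p$ with $t\not\equiv\tfrac{1}{2}\pmod p$ you assert that $S,T$ are of type $(1,1)$, whereas for $p\equiv 3\mod 4$ the types $(1,p^2)$ and $(p^2,p^2)$ do occur (they are governed by the parities of $\ord_p(R_1)$ and $\ord_p(R_2)$), exactly as the theorem's own list anticipates.

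The second gap is that your case split at $S,T$, namely $\ord_p(2t-1)=0$ versus $t\equiv\tfrac{1}{2}\pmod p$, is the wrong dichotomy; the correct one (Definition \ref{def:regions}) compares $2\ord_p(2t-1)$ with $-\ord_p(k)$. When $0<\ord_p(2t-1)<-\ord_p(k)/2$, the term $(2t-1)^2R_2^2$ still strictly dominates $2tR_1R_2$ inside $D_S$, so your claim that ``$D_S$ is dominated by $-2tR_1R_2$'' fails on a whole open set of parameters, where the point is in fact still of class (1): this is the paper's outer region. Moreover the limit case $2\ord_p(2t-1)=-\ord_p(k)$, in which the two candidate dominant terms have equal valuation and can cancel, is invisible to your dichotomy; it is precisely there that the degenerate forms R3 and I3$(-1)$ and the class (3) types with $b=1$ arise, and no dominant-term argument applies --- one must analyze the full quantity $\Delta(k,t,z_1)$ as in Lemmas \ref{lemma:eigenvalues4}, \ref{lemma:eigenvectors4} and \ref{lemma:eigenvectors4-degenerate}. (Your identification of the degenerate forms also needs the argument, supplied in Lemma \ref{lemma:eigenvectors4-degenerate}, that every member of the pencil has nonzero repeated eigenvalues, which rules out the nilpotent degenerate models of \cite[Theorem 10.6]{CrePel-williamson}.) So the ``hard part'' you defer to the end would be carried out on an incorrect case decomposition.
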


\subsection{Classification of rank $1$ singularities}

In \cite[Theorem A]{CrePel-williamson}, we also classify rank $1$ critical points of integrable systems. All local normal forms of these points, in the non-degenerate case, are $(x^2+c'\xi^2+\ocal(3),\eta+\ocal(2))$, where $c'\in X_p$. The following two are our main results concerning the rank $1$ singularities of the coupled angular momentum.

\begin{maintheorem}[Number of normal forms at rank $1$ singularities]\label{thm:number1}
	\letpprime. Let $\F$ be the parameter set in \eqref{eq:F} and let $(t,R_1,R_2)\in\F$. The $p$-adic coupled angular momentum $F_{t,R_1,R_2}=(J_{t,R_1,R_2},H_{t,R_1,R_2}):\sphere\times\sphere\to(\Qp)^2$ given by \eqref{eq:angular}
	has critical points of rank $1$ at the following positions:
	\begin{enumerate-alph}
		\item if $t=0$, the points of the form $(0,0,\pm 1,x_2,y_2,z_2)$ and $(x_1,y_1,z_1,0,0,\pm 1)$, which can have two different local normal forms;
		\item if $t\ne 0$, the points of the form $(x_1,y_1,z_1,x_2,y_2,z_2)$, where
		\[
		\left\{
		\begin{aligned}
			z_1 & =\frac{1}{2}\left(\frac{t}{ck}+\frac{ck}{t}-\frac{ckt}{(1-t-c)^2}\right); \\
			z_2 & =\frac{1}{2}\left(\frac{t(1-t-c)}{c^2k^2}-\frac{t}{1-t-c}-\frac{1-t-c}{t}\right),
		\end{aligned}
		\right.
		\]
		$x_1$ and $y_1$ are such that $x_1^2+y_1^2+z_1^2=1$,
		\[
		\left\{
		\begin{aligned}
			x_2 & =\frac{1-t-c}{ck}x_1; \\
			y_2 & =\frac{1-t-c}{ck}y_1,
		\end{aligned}
		\right.
		\]
		$c\in\Qp$ with $c\ne 0$, and $k=R_2/R_1$, which can have all the possible local normal forms ($7$ if $p\equiv 1\mod 4$, $5$ if $p\equiv 3\mod 4$, and $11$ if $p=2$);
		\item if $t=1$, the points of the form $(x_1,y_1,z_1,x_1,y_1,z_1)$ and $(x_1,y_1,z_1,-x_1,-y_1,-z_1)$, which can have two different local normal forms.
	\end{enumerate-alph}
\end{maintheorem}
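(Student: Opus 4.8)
\section*{Proof proposal}

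The plan is to handle the three regimes $t=0$, $t\neq 0$ and $t=1$ separately, in each case first locating the rank $1$ critical points and then computing their local normal forms through the rank $1$ part of \cite[Theorem A]{CrePel-williamson}. Throughout I work with the Poisson structure dual to $R_1\omega_1\oplus R_2\omega_2$, for which the coordinate functions on each factor satisfy $\{x_i,y_i\}=z_i/R_i$ and its cyclic permutations, while functions on different factors commute. A point is critical of rank at most $1$ precisely when $\dd J\wedge\dd H=0$, equivalently when the Hamiltonian vector fields $X_J$ and $X_H$ are linearly dependent, and it has rank exactly $1$ when in addition they do not both vanish; the vanishing case recovers the four rank $0$ points of Theorem \ref{thm:number}. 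The structural fact I will exploit is that $J$ generates the simultaneous rotations about the two $z$-axes, that both $J$ and $H$ are invariant under these rotations, and that $\dd J=R_1\dd z_1+R_2\dd z_2$ vanishes only at the four poles; in particular $\dd J\neq 0$ at every rank $1$ critical point.

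To locate the critical points I would write $X_J$ (the generator of the simultaneous rotation) and $X_H$ explicitly from the brackets above and impose their linear dependence. For $t=0$ we have $H=z_1$, so $(J,H)$ differs from the pair of height functions $(z_1,z_2)$ by an invertible linear map of the target, and the dependence condition forces exactly one of $\dd z_1,\dd z_2$ to vanish; this gives the two families in (a). For $t=1$ we have $H=x_1x_2+y_1y_2+z_1z_2$, the inner product of the two unit vectors, and criticality forces these vectors to be parallel or antiparallel, giving the two families in (c). For general $t\neq 0$ I would combine the dependence equations with the two sphere constraints and use the rotational symmetry to reduce them to relations determining $z_1,z_2$ and forcing $(x_2,y_2)$ to be proportional to $(x_1,y_1)$; solving these introduces a single free parameter $c\in\Qp\setminus\{0\}$ together with $k=R_2/R_1$ and reproduces the explicit expressions for $z_1,z_2,x_2,y_2$ in (b).

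For the normal forms I would use $\dd J\neq 0$ to take $f_2=J$ and $f_1=H-\lambda J$, where $\lambda\in\Qp$ is the unique scalar making $\dd f_1=0$ at the point (it exists because $\dd H$ is a multiple of $\dd J$ there). Passing to local Darboux coordinates by Lemma \ref{lemma:darboux}, the pair $(f_1,f_2)$ is in the form required by the rank $1$ non-degeneracy definition, and I would compute the block formed by the first two rows and columns of $\Omega^{-1}\dd^2 f_1$. The square of its eigenvalues equals $-4c'$ and so determines the invariant $c'\in X_p$, which I read off with \cite[Theorem A]{CrePel-williamson}; non-degeneracy is exactly the nonsingularity of this block, which fails only at the finitely many excluded values of the parameters. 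In the regimes (a) and (c) a direct computation shows that the transverse forms realize exactly two elements of $X_p$, so there are two normal forms; in (b) the invariant $c'$ becomes an explicit rational function of the free parameter $c$ (with $t$ and $k$ as auxiliary parameters).

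The main obstacle will be the surjectivity asserted in (b): that as $c$ ranges over $\Qp\setminus\{0\}$, and using the freedom in $t$ and $k$, the invariant $c'$ attains all $|X_p|$ values (seven, five or eleven according as $p\equiv 1\bmod 4$, $p\equiv 3\bmod 4$, or $p=2$). This is a $p$-adic representation problem for the rational function producing the eigenvalue square, and I would resolve it class by class, prescribing the valuation and residue of $c$ so as to control the valuation and unit part of that square; the case $p=2$ is the most delicate, since there the square classes are detected only modulo $8$. It then remains to check that the two endpoint regimes $t=0$ and $t=1$ neither enlarge nor collapse these families, and that the two normal forms found in (a) and (c) are genuinely inequivalent under local symplectomorphism centered at the point.
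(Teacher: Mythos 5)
Your skeleton matches the paper's: locate the rank $1$ points by imposing linear dependence of $\dd J$ and $\dd H$ and using the rotational symmetry (this is Proposition \ref{prop:rank1}), compute the transverse block of $\Omega^{-1}\dd^2(H-\lambda J)$ in adapted symplectic coordinates (Proposition \ref{prop:rank1-type}), and finish with a surjectivity argument for part (b) (Proposition \ref{prop:rank1-universal}). However, there is a genuine gap at the heart of your normal-form step: you claim that the square of the eigenvalues of the transverse block ``equals $-4c'$ and so determines the invariant $c'\in X_p$.'' It does not. The normal form only fixes $F-F(m)=B\circ(g_1,g_2)+\ocal(3)$ for some $B\in\M_2(\Qp)$, so the eigenvalues of the transverse block are invariants only up to a $\Qp^*$-rescaling; what is intrinsic is the square class of $-\lambda^2$. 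Since $|X_p|$ equals $7$, $5$ or $11$ while $\Qp^*/(\Qp^*)^2$ has only $4$ (odd $p$) or $8$ ($p=2$) classes, distinct elements of $X_p$ lie in the same square class (e.g.\ $1$ and $p^2$ when $p\equiv 3\bmod 4$, or $c_0$ and $c_0p^2$ when $p\equiv 1\bmod 4$), and the eigenvalues alone cannot separate them. The paper separates them with the eigenvector criterion of \cite[Proposition 4.6]{CrePel-williamson}, i.e.\ testing whether a quantity of the form $2a\lambda/(v\tr\Omega\bar v)$ lies in $\DSq(\Qp,c')$; this is precisely what produces the dependence on the parity of $\ord_p(R_1)$ (resp.\ $\ord_p(R_2)$) in cases (a) and (c), and the condition $rR_1(z_1^2-1)\in\DSq(\Qp,c')$ in case (b). Without it your method collapses: in (a) and (c) the transverse eigenvalues are $\pm\ii/R_1$ up to scale, always in the same square class, so you would find \emph{one} normal form rather than two; and in (b), controlling only the square class of the eigenvalue via $c$ can realize at most $4$ (odd $p$) or $8$ ($p=2$) forms, never the claimed $7$, $5$ or $11$. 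The paper's surjectivity proof needs, in addition to choosing $c$ of order $3$ or $4$ to hit every square class, the freedom to vary $R_1,R_2$ while keeping $k=R_2/R_1$ fixed so as to toggle the secondary $\DSq$-invariant; this extra degree of freedom is absent from your argument.

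A smaller but real error: at $t=1$ you assert that criticality ``forces these vectors to be parallel or antiparallel.'' This is false; it corresponds only to the subcase $\dd H=0$ (the paper's case $c=0$). For $t=1$ the family of part (b) (with $1-t-c=-c\neq 0$) is also present, as in the real case \cite[Proposition 2.5]{LeFPel}, so the parallel/antiparallel points are an \emph{additional} family at $t=1$, not the whole rank $1$ critical set there.
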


\begin{maintheorem}[Description of normal forms at rank $1$ singularities]\label{thm:main1}
	\letpprime. Let $\F$ be the parameter set in \eqref{eq:F} and let $(t,R_1,R_2)\in\F$. Let $F_{t,R_1,R_2}=(J_{t,R_1,R_2},H_{t,R_1,R_2}):\sphere\times\sphere\to(\Qp)^2$ be the $p$-adic coupled angular momentum given by \eqref{eq:angular}. Then the local normal form of the rank $1$ critical points described in Theorem \ref{thm:number1} is $(x^2+c'\xi^2+\ocal(3),\eta+\ocal(2))$, where $c'\in\{1,p^2\}$ for the points in parts (a) and (c), and $c'\in X_p$ arbitrary for the points in part (b).
\end{maintheorem}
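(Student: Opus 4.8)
The plan is to reduce the theorem to three explicit Hessian computations, one for each of the families (a), (b), (c) of Theorem \ref{thm:number1}, and in each case to read off the invariant $c'$ of the non-degenerate rank $1$ normal form $(x^2+c'\xi^2+\ocal(3),\eta+\ocal(2))$. Recall from \cite[Theorem A]{CrePel-williamson} how $c'$ is extracted: at such a point there is a unique (up to scaling) linear combination $g=\alpha J_{t,R_1,R_2}+\beta H_{t,R_1,R_2}$ with $\dd g=0$, and, writing $B$ for the $(x,\xi)$-block of $\Omega^{-1}\dd^2g$ in symplectic coordinates $(x,\xi,y,\eta)$ in which the regular component $h$ satisfies $\dd h=\dd\eta$, the square class of $c'$ equals that of $\tfrac14\det B$, while the finer $X_p$-datum (valuation parity, and the unit refinements occurring for $p\equiv1\bmod4$ and $p=2$) is likewise read off from $B$. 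Non-degeneracy, granted by Theorem \ref{thm:number1}, ensures $\det B\neq0$, so the non-degenerate normal form indeed applies and $c'\in X_p$ is well defined.

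For parts (a) and (c) I would compute $B$ in the pole chart $(x_i,y_i)$ of the relevant factor, where $z_i=\pm\bigl(1-\tfrac12(x_i^2+y_i^2)\bigr)+\ocal(4)$ and the symplectic form is $\pm R_i\,\dd x_i\wedge\dd y_i+\ocal(2)$. At $t=0$ the singular combination is $g=H_{t,R_1,R_2}=z_1$ at the points $(0,0,\pm1,x_2,y_2,z_2)$ and $g=J_{t,R_1,R_2}-R_1H_{t,R_1,R_2}=R_2z_2$ at the points $(x_1,y_1,z_1,0,0,\pm1)$; in both cases $B$ is conjugate to a scalar multiple of the standard $2\times2$ symplectic matrix, so $\det B$ is a square (namely $R_1^{-2}$ and $1$ up to the factor $4$). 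Thus $c'$ lies in the trivial square class, and the parity of $\ord(R_1)$ selects between the two even-valuation representatives, giving $c'\in\{1,p^2\}$. The case $t=1$ is analogous once one passes to coordinates adapted to the diagonal and antidiagonal: there $\dd H_{t,R_1,R_2}$ vanishes because $H_{t,R_1,R_2}\equiv\pm1$ on these loci, and its transverse Hessian is again definite-like, so $\det B$ is a square and the same conclusion holds.

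For part (b) the computation is genuinely coupled and is the crux of the proof. Substituting the explicit coordinates for $z_1,z_2,x_2,y_2$ from Theorem \ref{thm:number1} (in terms of $x_1,y_1$, the free parameter $c\in\Qp\setminus\{0\}$ and $k=R_2/R_1$) into \eqref{eq:angular}, I would identify the singular combination $g$, compute $\dd^2g$, invert $\Omega$ on the full tangent space, and restrict to the transverse symplectic plane to obtain $B$ and hence $\det B$ as an explicit rational function $\rho(c,t,k)$. The main obstacle is twofold. First, the transverse plane is not a coordinate plane, so the restriction must be performed by projecting along the Hamiltonian vector field of the regular component; keeping this computation tractable is the principal technical difficulty. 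Second, and more seriously, one must establish surjectivity onto $X_p$: that every class of $X_p$ is realized by some admissible triple $(t,R_1,R_2)\in\F$ and some rank $1$ point.

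For the surjectivity I would proceed as follows. Fixing generic $t$ and $k$, the map $c\mapsto\rho(c,t,k)$ is a non-constant rational function; solving $\rho(c,t,k)\in\sigma\cdot(\Qp^\times)^2$ for each target class $\sigma$ shows that its values meet every square class of $\Qp^\times$. The finer $X_p$-distinctions are then obtained by varying $\ord(c)$ and $\ord(k)$ (the latter ranging over all negative integers since $|R_2|_p>|R_1|_p$), which shifts $\ord(\rho)$ through the residues needed to separate, for instance, $1$ from $p^2$. Carrying this out for each class, treating $p\equiv1\bmod4$, $p\equiv3\bmod4$ and $p=2$ separately, yields all $7$, $5$ and $11$ normal forms respectively; the case $p=2$ is the most delicate, owing to the larger set $X_2$ and the more numerous square classes of $\Qp^\times$. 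This establishes that $c'\in X_p$ is arbitrary for the points of part (b), completing the proof.
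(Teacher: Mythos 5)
Your frame for computing $c'$ --- take the unique (up to scale) singular combination $g$ of $J_{t,R_1,R_2}$ and $H_{t,R_1,R_2}$, pass to symplectic coordinates in which the regular component is $\eta$, and read the invariant off the transverse block $B$ of $\Omega^{-1}\dd^2g$ --- is exactly the paper's (Proposition \ref{prop:rank1-type}), and your treatment of parts (a) and (c) is in substance the paper's computation. Two small slips there: for the family $(x_1,y_1,z_1,0,0,\pm 1)$ the selector between $1$ and $p^2$ is the parity of $\ord_p(R_2)$, not of $\ord_p(R_1)$; and at $t=1$ the vanishing of $\dd H_{t,R_1,R_2}$ at the (anti)diagonal points does not follow from $H_{t,R_1,R_2}$ being constant on those loci (constancy along a submanifold controls only tangential derivatives, and $p$-adically no extremum argument is available), though a direct one-line computation does give it. Neither slip affects the conclusion $c'\in\{1,p^2\}$, which already follows from $\det B$ being a square.

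The genuine gap is in part (b), precisely at what you call the crux: surjectivity onto $X_p$. You propose to realize the finer distinctions inside a square class --- say $1$ versus $p^2$ --- by ``shifting $\ord(\rho)$'', where $\rho(c,t,k)=\det B$, as $\ord(c)$ and $\ord(k)$ vary. This cannot work, for two reasons. First, $\det B$ is an invariant only up to multiplication by squares: the singular combination $g$ is determined only up to a scalar $\beta$ (and the normal form only up to an invertible target matrix), and replacing $g$ by $\beta g$ multiplies $\det B$ by $\beta^2$. So the only information in $\det B$ is its square class, and $1$ and $p^2$ lie in the \emph{same} square class; no valuation count on $\rho$ can separate them. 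What does separate them is the eigenvector criterion of \cite[Proposition 4.6]{CrePel-williamson}, i.e.\ whether a quantity of the form $2a\lambda/(v\tr\Omega\bar{v})$ lies in $\DSq(\Qp,c')$, which in this computation becomes the condition $rR_1(\hat{z}_1^2-1)\in\DSq(\Qp,c')$ of Proposition \ref{prop:rank1-type}(3). Second --- and this is why your setup cannot be repaired as stated --- that condition depends on $R_1$ separately, whereas your $\rho$, like the square class of the paper's $f(c,k,t)$, depends only on $(c,t,k)$. The paper's Proposition \ref{prop:rank1-universal} realizes both representatives of a given square class exactly by scaling $R_1$ and $R_2$ simultaneously with $k=R_2/R_1$ fixed, which leaves the critical point and $f(c,k,t)$ unchanged but toggles the $\DSq$-condition; your parametrization has discarded that degree of freedom. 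Finally, even your coarse step is unjustified as written: a non-constant rational function need not meet every square class of $\Qp^{\times}$ (consider $c\mapsto c^2$), so ``solving $\rho\in\sigma\cdot(\Qp^\times)^2$'' is an assertion, not an argument. The paper proves this step by the concrete asymptotic computation $f(c,k,t)\approx c^3k^2/(2t^2)$ for $\ord_p(c)\in\{3,4\}$, $\ord_p(t)=2$ and $\ord_p(k)$ very negative, so that the class of $f$ equals that of $c/2$, which ranges over all square classes.
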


\subsection{Works on $p$-adic geometry and symplectic geometry}\label{sec:works}

The field of $p$-adic geometry is extensive: see Lurie's lecture \cite{Lurie}, the book by Scholze-Weinstein \cite{SchWei} and the references therein. We recommend \cite{BreFre,CLH,DKKV,DKKVZ,GKPSW} for works about the role of $p$-adic numbers in mathematical physics; these numbers have been used in quantum mechanics \cite{Gosson,DjoDra,Dragovich-quantum,Dragovich-harmonic,RTVW,VlaVol}, string dynamics \cite{BFOW,FreOls,FreWit,FGZ,GarLop,Volovich} and cosmology \cite{HSSS,MarTed,Susskind}. For a construction of $p$-adic symplectic vector spaces and some of their invariants, see Zelenov \cite{Zelenov} and Hu-Hu \cite{HuHu}. We recommend the articles \cite{AloHoh,Eliashberg,Palmer,PelVuN-symplectic,Pelayo-hamiltonian,Pelayo-symplectic,Schlenk,Weinstein-symplectic} and the books \cite{HofZeh,MarRat,McDSal,OrtRat} for surveys on symplectic geometry and topology and integrable systems, and its connection to mechanics. See also Remark \ref{rem:works}.

\subsection*{Structure of the paper}

Section \ref{sec:general} proves some general results about the coupled angular momentum system. Section \ref{sec:PQ} computes the normal forms at the critical points $P$ and $Q$. Sections \ref{sec:ST-outer}, \ref{sec:ST-inner} and \ref{sec:ST-limit} compute the normal forms at the critical points $S$ and $T$, for different values of the parameters. Section \ref{sec:01} does the same computation for $t\in\{0,1\}$, which require a different proof than the rest of values of $t$. Section \ref{sec:rank1} computes the normal forms at the critical points of rank $1$. Section \ref{sec:proofs} contains the proofs of the theorems in the introduction. In Section \ref{sec:final} we make some final remarks. We close the paper with two appendices (Sections \ref{sec:appendix} and \ref{sec:real}) which recall the basics of $p$-adic numbers and give a quick review of the real coupled angular momentum system.

\bigskip
\noindent \textbf{Acknowledgments.} The first author is grateful to the School of Mathematics of the Complutense University for the hospitality during a visit in May 2025 where part of this paper was written. The first author is funded by grant PID2022-137283NB-C21 of MCIN/AEI/ 10.13039/501100011033 / FEDER, UE. The second author is funded by a FBBVA (Bank Bilbao Vizcaya Argentaria Foundation) Grant for Scientific Research Projects with title \textit{From Integrability to Randomness in Symplectic and Quantum Geometry}. The second author thanks Luis Crespo and Francisco Santos from the University of Cantabria for the hospitality during July and August of 2025 when part of this paper was written, the Dean of the School of Mathematical Sciences Antonio Br\'u, and the Chair of the Department of Algebra, Geometry and Topology at the Complutense University of Madrid, Rutwig Campoamor, for their support and excellent resources he is being provided with to carry out the FBBVA project.

\section{Preliminary results}\label{sec:general}

In this section we prove some preliminary results about the $p$-adic coupled angular momentum system in Definition \ref{def:angular} which will hold for all prime numbers. In our recent paper \cite[Theorem B]{CrePel-Darboux}, we proved that it is always possible to choose local symplectic coordinates around a point in a $p$-adic analytic symplectic manifold, that is, we prove a $p$-adic analog of Darboux's Theorem in real symplectic geometry and use it to derive a strong global classification \cite[Theorem D]{CrePel-Darboux} which is a symplectic version of a result of Serre \cite[Th\'eor\`eme 1]{Serre}. This means that all the normal forms in the present paper can be extended to have the standard form for the symplectic form at a whole neighborhood of the critical point, instead of only at the critical point.

\begin{lemma}\label{lemma:darboux}
	\letpprime. Let $(M,\omega)$ be a $4$-dimensional $p$-adic analytic symplectic manifold. Let $F=(f_1,f_2):M\to(\Qp)^2$ be a $p$-adic analytic integrable system and let $m$ be a critical point of $F$. Let $(g_1,g_2)$ be the linear normal form of $F$ at $m$ according to \cite[Theorem A]{CrePel-williamson}. Then $(g_1,g_2)$ is also the local normal form of $F$ at $m$, that is, there exist local symplectic coordinates around $m$ such that $F-F(m)=B\circ(g_1,g_2)+\ocal(3)$ for some matrix $B\in\M_2(\Qp)$.
\end{lemma}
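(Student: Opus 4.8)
The gap between the \emph{linear} normal form of \cite[Theorem A]{CrePel-williamson} and the \emph{local} normal form asserted here is precisely that the earlier classification only guarantees that the symplectic form $\omega$ equals the standard one \emph{at the point} $m$, while away from $m$ it need not be standard. The plan is to combine that linear normal form with the $p$-adic Darboux theorem \cite[Theorem B]{CrePel-Darboux}: I would first bring the relevant jet of $F$ into normal form by a linear change of coordinates, and then correct $\omega$ to the standard form on a whole neighborhood by an analytic symplectomorphism chosen so as not to disturb the jet already normalized.

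Concretely, by \cite[Theorem A]{CrePel-williamson} there are linear symplectic coordinates $(x,\xi,y,\eta)$ centered at $m$ in which $F-F(m)=B\circ(g_1,g_2)+\ocal(3)$ for some $B\in\M_2(\Qp)$ and in which $\omega_m$ is the standard symplectic form. Applying \cite[Theorem B]{CrePel-Darboux} to $\omega$, I would produce an analytic symplectomorphism $\chi$ fixing $m$, defined near $m$, with $\chi^*\omega_{\mathrm{std}}=\omega$; since $\omega_m$ is already standard, a $p$-adic Moser-type argument (as in the proof of \cite[Theorem B]{CrePel-Darboux}) allows me to take $\chi$ tangent to the identity at $m$, so that $\chi(z)=z+\ocal(2)$. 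Precomposition with such a map preserves the linear part of any function, and the quadratic part of any function whose linear part vanishes. Since the components of $B\circ(g_1,g_2)$ are either purely quadratic (rank $0$) or of the form $\eta+\ocal(2)$ and $x^2+c'\xi^2+\ocal(3)$ (rank $1$), the identity $F\circ\chi-F(m)=B\circ(g_1,g_2)+\ocal(3)$ persists, while $\omega$ is now standard on a neighborhood of $m$; these are the sought local symplectic coordinates.

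The main point to check is the compatibility in the last step, namely that the Darboux normalization of $\omega$ does not destroy the normal form of the jet of $F$; this is exactly what the tangency $\dd\chi_m=\mathrm{Id}$ secures, and it is available because \cite[Theorem A]{CrePel-williamson} already places $\omega_m$ in standard form. The remaining verifications — that a map $z\mapsto z+\ocal(2)$ preserves the controlled jets and keeps the remainders in $\ocal(3)$ (resp.\ $\ocal(2)$) — are routine Taylor estimates. If one prefers to avoid the tangent-to-identity refinement, an equivalent route is to invoke Darboux \emph{first}, making $\omega$ standard on a neighborhood, and only then realize the jet normalization by a \emph{linear} symplectomorphism $L\in\mathrm{Sp}(4,\Qp)$; since $L$ preserves the standard symplectic form everywhere, the composed coordinates stay symplectic on the whole neighborhood and yield the same conclusion.
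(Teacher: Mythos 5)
Your proposal is correct, and your closing ``alternative route'' is in fact exactly the paper's own proof: the paper invokes Darboux's theorem \cite[Theorem B]{CrePel-Darboux} \emph{first}, producing coordinates in which $\omega$ is standard on a whole neighborhood $U$ of $m$, then applies \cite[Theorem A]{CrePel-williamson} to $F$ written in those coordinates, and observes that the resulting change of coordinates is a linear symplectic matrix $S$, which preserves the constant-coefficient standard form on all of $U$ and not merely at $m$. The one ingredient the paper makes explicit that your sketch of this route leaves implicit is the uniqueness of the linear normal form: applying Theorem A in the Darboux coordinates a priori yields \emph{some} normal form from the classification, and it is uniqueness that identifies it with the given $(g_1,g_2)$. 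Your primary route --- Williamson first, then a Darboux correction tangent to the identity --- is genuinely different: it trades that uniqueness argument for a jet computation (a map $z\mapsto z+\ocal(2)$ preserves linear parts, and quadratic parts of functions with vanishing linear part), which you carry out correctly for both the rank $0$ forms and the rank $1$ forms $(x^2+c'\xi^2+\ocal(3),\eta+\ocal(2))$.

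The only weak point in the primary route is how you justify the tangent-to-identity refinement. You appeal to ``a $p$-adic Moser-type argument as in the proof of \cite[Theorem B]{CrePel-Darboux}'', which presumes the internal structure of that proof (Moser's trick does not transfer naively to $\Qp$, where one cannot run the usual time-dependent flow argument) and is in any case unnecessary. The refinement is formal: if $\chi$ is any local symplectomorphism centered at $m$ with $\chi^*\omega_{\mathrm{std}}=\omega$, then $\dd\chi_m$ lies in $\mathrm{Sp}(4,\Qp)$ precisely because $\omega_m$ is already standard in your Williamson coordinates, so $(\dd\chi_m)^{-1}\circ\chi$ is again such a symplectomorphism and is tangent to the identity at $m$. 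With that repair, both of your routes are complete proofs, and the second coincides with the paper's.
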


\begin{proof}
	By \cite[Theorem B]{CrePel-Darboux}, there exist local symplectic coordinates $(x,\xi,y,\eta)$ defined in an open subset $U\subset M$ which contains $m$. We apply \cite[Theorem A]{CrePel-williamson} to $F$ written in the new coordinates $(x,\xi,y,\eta)$. This gives us linear symplectic coordinates $(x',\xi',y',\eta')$ in which $F$ attains its normal form. Since the linear normal form of a system is unique, that normal form must be $(g_1,g_2)$ itself.
	
	The coordinates $(x',\xi',y',\eta')$ are obtained making a linear change of coordinates in $(x,\xi,y,\eta)$; let $S$ be the matrix of this coordinate change. Since both sets of coordinates are linearly symplectic, $S$ is a symplectic matrix. This means that the coordinates $(x',\xi',y',\eta')$ can be obtained multiplying $(x,\xi,y,\eta)$ by the symplectic matrix $S$, and the latter are symplectic on all of $U$, hence the former must also be symplectic on all of $U$ and $(g_1,g_2)$ is a local normal form of $F$, as we wanted.
\end{proof}

In Definition \ref{def:angular} it is implicitly assumed that the $p$-adic coupled angular momentum is a $p$-adic analytic integrable system, which requires proving:

\begin{itemize}
	\item that the Poisson bracket of $J_{t,R_1,R_2}$ and $H_{t,R_1,R_2}$ vanishes everywhere,
	\item that $\dd J_{t,R_1,R_2}$ and $\dd H_{t,R_1,R_2}$ are linearly independent on a dense subset of $\sphere\times\sphere$ (the definition of $p$-adic analytic integrable system is given by Pelayo-Voevodsky-Warren \cite[Definition 7.1]{PVW}, and we use a slight refinement of it which is less restrictive \cite[Definition 3.3]{CrePel-JC}).
\end{itemize}

Given a $p$-adic analytic function on a $p$-adic analytic symplectic manifold $f:(M,\omega)\to\Qp$, the \emph{Hamiltonian vector field} $X_f$ associated to it is defined by the same expressions as in the real case. Similarly for the Poisson brackets $\{\cdot,\cdot\}$.

\begin{lemma}\label{lemma:poisson-sphere}
	\letpprime. Let $\sphere$ be the $p$-adic sphere given by \eqref{eq:sphere}. Let $\omega_1,\omega_2$ be respectively the standard $p$-adic analytic symplectic forms on the first and second factor of $\sphere\times\sphere$. If $(x_1,y_1,z_1,x_2,y_2,z_2)$ are the coordinates on $(\sphere\times\sphere,R_1\omega_1\oplus R_2\omega_2)$, then
	$\{x_i,y_i\}=\frac{z_i}{R_i};\{y_i,z_i\}=\frac{x_i}{R_i};\{z_i,x_i\}=\frac{y_i}{R_i},$ for every $i\in\{1,2\}$.
\end{lemma}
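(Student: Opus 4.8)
The plan is to reduce the statement to a computation on a single sphere and then carry it out in a local chart, extending it to the whole sphere by a covering argument. First I would exploit the product structure: because the symplectic form on $\sphere\times\sphere$ is the direct sum $R_1\omega_1\oplus R_2\omega_2$ and each $x_i,y_i,z_i$ is pulled back from the $i$-th factor, the Hamiltonian vector field of a function depending only on the $i$-th factor is tangent to that factor, so the bracket of two such functions equals the bracket computed on the single sphere $(\sphere,R_i\omega_i)$. Hence it suffices to prove, on one sphere with symplectic form $R\omega$, where $\omega=\frac{1}{z}\dd x\wedge\dd y$ and the coordinates satisfy $x^2+y^2+z^2=1$, the three identities $\{x,y\}=z/R$, $\{y,z\}=x/R$ and $\{z,x\}=y/R$.

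Next I would work in the chart $\{z\neq 0\}$. Applying the $p$-adic analytic implicit function theorem to $G(x,y,z)=x^2+y^2+z^2-1$, whose derivative $\partial_z G=2z$ is invertible there, shows that $(x,y)$ is a system of local $p$-adic analytic coordinates and that $z$ becomes an analytic function of $(x,y)$ with $\partial_x z=-x/z$ and $\partial_y z=-y/z$. On a two-dimensional symplectic manifold whose form is $h\,\dd x\wedge\dd y$, the Poisson bracket (with the sign convention of the paper) is $\{f,g\}=\frac{1}{h}\bigl(\partial_x f\,\partial_y g-\partial_y f\,\partial_x g\bigr)$; here $h=R/z$, so the prefactor is $z/R$. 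Substituting $f,g\in\{x,y,z\}$ and using the two derivatives of $z$ yields the three identities by a one-line computation each.

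Finally I would remove the restriction $z\neq 0$. Since $x^2+y^2+z^2=1$, the coordinates never vanish simultaneously, so the charts $\{x\neq 0\}$, $\{y\neq 0\}$, $\{z\neq 0\}$ cover $\sphere$; on them $\omega$ is represented by $\frac{1}{x}\dd y\wedge\dd z$, $\frac{1}{y}\dd z\wedge\dd x$ and $\frac{1}{z}\dd x\wedge\dd y$, which agree on overlaps because differentiating the constraint gives $x\,\dd x+y\,\dd y+z\,\dd z=0$ on the sphere. The cyclic symmetry $x\to y\to z\to x$ permutes both the three charts and the three bracket relations, so the calculation done in $\{z\neq 0\}$ transfers verbatim to the other two charts, and all three identities hold at every point of $\sphere$.

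The computation is routine; the steps needing care are the invocation of the $p$-adic implicit function theorem to legitimize $(x,y)$ as analytic coordinates, so that the two-dimensional bracket formula applies, the verification that the three local expressions for $\omega$ glue to a well-defined form on $\sphere$, and fixing the sign convention so that the brackets come out with the stated positive signs rather than their negatives. The only genuinely structural point is the chart-covering argument, which is what upgrades the single-chart calculation to a statement valid everywhere on $\sphere\times\sphere$.
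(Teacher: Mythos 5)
Your proposal is correct and follows essentially the same route as the paper: a direct computation of the brackets from the local expression $\omega_i=\frac{1}{z_i}\dd x_i\wedge\dd y_i$ (your two-dimensional formula $\{f,g\}=\frac{1}{h}(\partial_x f\,\partial_y g-\partial_y f\,\partial_x g)$ is just the unwound form of the paper's contraction computation $\imath(X_{y_i})\omega=\dd y_i$), followed by the same cyclic-permutation argument to obtain the remaining identities. The only difference is presentational: you are more explicit than the paper about invoking the implicit function theorem to legitimize $(x,y)$ as coordinates and about the three-chart covering of $\sphere$, which the paper leaves implicit.
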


\begin{proof}
	For any $p$-adic analytic vector field on a $p$-adic analytic symplectic manifold $(M,\omega)$, we denote by $\imath(X)\omega$ the contraction $\omega(X,\cdot)$ of the symplectic form by $X$. From the expression of $\omega_i$, which follows from Definition \ref{def:angular}, we obtain $\omega_i=\frac{1}{z_i}\dd x_i\wedge\dd y_i$, and $\imath\left(z_i\frac{\partial}{\partial x_i}\right)\omega_i=\dd y_i.$
	Hence
	$\imath\left(\frac{z_i}{R_i}\frac{\partial}{\partial x_i}\right)(R_1\omega_1\oplus R_2\omega_2)=\dd y_i$
	and it follows that
	$X_{y_i}=\frac{z_i}{R_i}\frac{\partial}{\partial x_i}.$
	Finally:
	$\{x_i,y_i\}=\dd x_i(X_{y_i})=\dd x_i\left(\frac{z_i}{R_i}\frac{\partial}{\partial x_i}\right)=\frac{z_i}{R_i}.$
	The other two identities in the statement follow from the fact that permuting cyclically the coordinates of $\sphere$, that is, changing $(x_i,y_i,z_i)$ to $(y_i,z_i,x_i)$, does not change the symplectic form.
\end{proof}

\begin{figure}
	\includegraphics[width=0.45\linewidth]{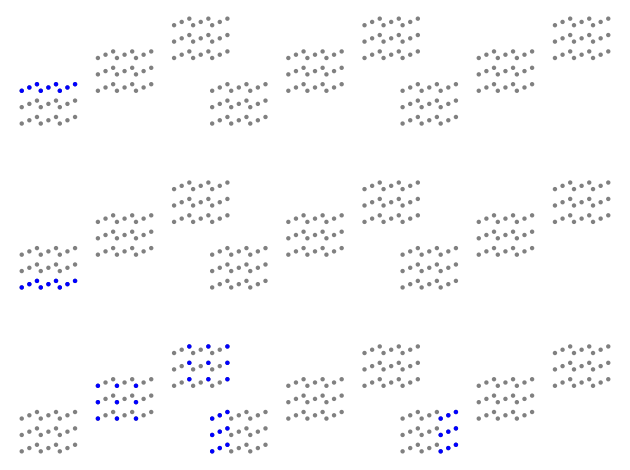}\raisebox{0.15\linewidth}{$\times$}\includegraphics[width=0.45\linewidth]{3adic-sphere}
	
	\bigskip
	\includegraphics[width=0.45\linewidth]{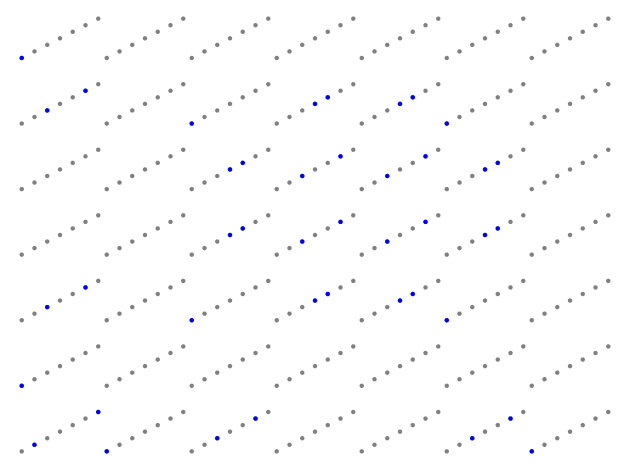}\raisebox{0.15\linewidth}{$\times$}\includegraphics[width=0.45\linewidth]{7adic-sphere}
	\caption{Symbolic representation of $\sphere\times\sphere$, the phase space of the $p$-adic coupled angular momentum, for $p=3$ and $p=7$ (see Definition \ref{def:angular} for the definition of $\sphere$). The grey points represent $(\Zp)^3$, and the blue points are those being in $\sphere$. Each point has radius $1/9$ above and $1/7$ below.}
	\label{fig:padic-spheres}
\end{figure}

\begin{proposition}\label{prop:poisson-cam}
	\letpprime. Let $\F$ be the parameter set in \eqref{eq:F} and let $(t,R_1,R_2)\in \F$. Let $F_{t,R_1,R_2}=(J_{t,R_1,R_2},H_{t,R_1,R_2}):\sphere\times\sphere\to(\Qp)^2$ be the $p$-adic coupled angular momentum system given by \eqref{eq:angular}. Then $\{J_{t,R_1,R_2},H_{t,R_1,R_2}\}=0$.
\end{proposition}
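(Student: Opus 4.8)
The plan is to reduce everything to the coordinate Poisson brackets computed in Lemma \ref{lemma:poisson-sphere} and then expand by bilinearity and the Leibniz rule, abbreviating $J=J_{t,R_1,R_2}$ and $H=H_{t,R_1,R_2}$. First I would record the brackets between coordinates coming from the two different factors. Since the symplectic form is the direct sum $R_1\omega_1\oplus R_2\omega_2$, the Hamiltonian vector field of any function pulled back from the first factor is tangent to that factor and annihilates $x_2,y_2,z_2$; hence $\{u_1,v_2\}=0$ whenever $u_1\in\{x_1,y_1,z_1\}$ and $v_2\in\{x_2,y_2,z_2\}$. Combined with Lemma \ref{lemma:poisson-sphere}, this yields a complete bracket table for the six coordinate functions.

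With that table in hand, I would split $H=(1-t)z_1+t(x_1x_2+y_1y_2+z_1z_2)$ and use bilinearity to write
\[\{J,H\}=(1-t)\{J,z_1\}+t\{J,x_1x_2+y_1y_2+z_1z_2\}.\]
The first summand vanishes immediately, since $\{J,z_1\}=R_1\{z_1,z_1\}+R_2\{z_2,z_1\}=0$ by antisymmetry and cross-factor vanishing. For the second summand I would compute the auxiliary brackets $\{J,x_i\}=y_i$ and $\{J,y_i\}=-x_i$ for $i\in\{1,2\}$ (each following from $\{z_i,x_i\}=y_i/R_i$, $\{z_i,y_i\}=-x_i/R_i$ and the factor $R_i$ in $J$), together with $\{J,z_i\}=0$. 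Applying the Leibniz rule term by term then gives $\{J,x_1x_2\}=y_1x_2+x_1y_2$, $\{J,y_1y_2\}=-x_1y_2-y_1x_2$ and $\{J,z_1z_2\}=0$, which sum to zero, so that $\{J,H\}=0$.

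Conceptually this cancellation is not an accident: $J=R_1z_1+R_2z_2$ generates the simultaneous rotation of both $p$-adic spheres about the common $z$-axis, and $H$ is assembled from two quantities invariant under that rotation, namely $z_1$ and the \emph{dot product} $x_1x_2+y_1y_2+z_1z_2$. I expect the only point requiring genuine care to be the justification of the cross-factor vanishing $\{u_1,v_2\}=0$ from the direct-sum structure of the symplectic form; once that is in place, the remainder is a short and mechanical bookkeeping of signs in the Leibniz expansion, valid verbatim over $\Qp$ because no analytic or archimedean input is used.
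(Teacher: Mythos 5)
Your proof is correct and follows essentially the same route as the paper's: both reduce to the coordinate brackets of Lemma \ref{lemma:poisson-sphere}, use the direct-sum structure of $R_1\omega_1\oplus R_2\omega_2$ to kill cross-factor brackets, and expand $\{J,H\}$ by bilinearity and the Leibniz rule, with the terms $(1-t)z_1$ and $tz_1z_2$ vanishing trivially and the cancellation occurring in $x_1x_2+y_1y_2$. The only difference is organizational (you precompute $\{J,x_i\}=y_i$, $\{J,y_i\}=-x_i$, $\{J,z_i\}=0$ before distributing, which nicely reflects the rotational interpretation you mention), so no further changes are needed.
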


\begin{proof}
	The proof is the same as in the real case. Since the symplectic form does not mix the coordinates of the two $p$-adic spheres, $z_1$ commutes with itself and with $z_2$, hence also with $J_{t,R_1,R_2}$. The same happens with the term $z_1z_2$ in $H_{t,R_1,R_2}$. It is left to show that $x_1x_2+y_1y_2$ commutes with $J_{t,R_1,R_2}$:
	\begin{align*}
		\Big\{x_1x_2+y_1y_2,J_{t,R_1,R_2}(x_1,y_1,z_1,x_2,y_2,z_2)\Big\} & =\Big\{x_1x_2+y_1y_2,R_1z_1+R_2z_2\Big\} \\
		& =R_1(\{x_1,z_1\}x_2+\{y_1,z_1\}y_2) \\
		& \phantom{=}+R_2(x_1\{x_2,z_2\}+y_1\{y_2,z_2\}) \\
		& =R_1\left(-\frac{y_1x_2}{R_1}+\frac{x_1y_2}{R_1}\right)+R_2\left(-\frac{x_1y_2}{R_2}+\frac{y_1x_2}{R_2}\right) \\
		& =0,
	\end{align*}
	where in the second equality we have used the formula for the Poisson bracket of a product
	$\{fg,h\}=\{f,h\}g+f\{g,h\}$
	and in the third we are using Lemma \ref{lemma:poisson-sphere}.
\end{proof}

\begin{proposition}\label{prop:rank0}
	\letpprime. The $p$-adic coupled angular momentum system $F_{t,R_1,R_2}=(J_{t,R_1,R_2},H_{t,R_1,R_2}):\sphere\times\sphere\to(\Qp)^2$ given by \eqref{eq:angular} has exactly four rank $0$ critical points: the points $P$, $Q$, $S$ and $T$ defined in \eqref{eq:PQST}.
\end{proposition}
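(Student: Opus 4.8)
The plan is to detect rank $0$ critical points via the Lagrange-multiplier characterization of vanishing differentials on the $p$-adic analytic submanifold $\sphere\times\sphere$. A point $m$ has rank $0$ precisely when both $\dd J_{t,R_1,R_2}$ and $\dd H_{t,R_1,R_2}$ vanish on the tangent space $T_m(\sphere\times\sphere)$. Since each factor $\sphere$ is cut out by $x_i^2+y_i^2+z_i^2=1$, this tangent space is the kernel of the differentials of the two defining equations, so the restriction of a function's differential vanishes exactly when its ambient gradient in $(\Qp)^6$ lies in the span of the normal vectors $n_1=(x_1,y_1,z_1,0,0,0)$ and $n_2=(0,0,0,x_2,y_2,z_2)$. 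The key simplification is to analyze $\dd J_{t,R_1,R_2}=0$ first, because $J_{t,R_1,R_2}=R_1z_1+R_2z_2$ has the constant ambient gradient $(0,0,R_1,0,0,R_2)$.

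Imposing $(0,0,R_1,0,0,R_2)=\lambda_1 n_1+\lambda_2 n_2$ yields $\lambda_1 x_1=\lambda_1 y_1=0$ and $\lambda_1 z_1=R_1$, together with the analogous equations in the second factor. Because $R_1,R_2\in\Qp\setminus\{0\}$, both multipliers are forced to be nonzero, whence $x_1=y_1=x_2=y_2=0$ and consequently $z_1^2=z_2^2=1$, i.e. $z_i=\pm 1$. Thus the only candidates for rank $0$ critical points are the four pole-pairs $P,Q,S,T$ of \eqref{eq:PQST}, and no other point of $\sphere\times\sphere$ can be a rank $0$ critical point.

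It then remains to verify that $\dd H_{t,R_1,R_2}$ also vanishes at each of these four points, so that they are genuinely rank $0$ and not merely rank $1$. Evaluating the ambient gradient of $H_{t,R_1,R_2}=(1-t)z_1+t(x_1x_2+y_1y_2+z_1z_2)$ at a pole-pair (where $x_i=y_i=0$) reduces it to $(0,0,(1-t)+tz_2,0,0,tz_1)$, which lies in the span of $n_1=(0,0,z_1,0,0,0)$ and $n_2=(0,0,0,0,0,z_2)$ since $z_1,z_2\neq 0$; hence the restricted differential $\dd H_{t,R_1,R_2}$ vanishes at each of $P,Q,S,T$. Combining the two steps shows that the rank $0$ critical set is exactly $\{P,Q,S,T\}$, which has four distinct elements as the $z$-coordinates differ.

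The computations are routine, so I do not anticipate a genuine obstacle; the only point requiring care is the justification that the Lagrange-multiplier condition correctly encodes the vanishing of the \emph{restricted} differential in the $p$-adic analytic category. This follows from $\sphere$ being a $p$-adic analytic submanifold of $(\Qp)^3$, so that its tangent space is the kernel of the differential of its defining equation, and the implicit-function-theorem machinery available over $\Qp$ guarantees this description is valid. As an alternative one could phrase the whole argument through the Hamiltonian vector fields $X_{J_{t,R_1,R_2}}$ and $X_{H_{t,R_1,R_2}}$ using the Poisson brackets of Lemma \ref{lemma:poisson-sphere}, but the direct gradient computation above is the most economical route.
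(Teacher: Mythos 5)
Your proposal is correct and takes essentially the same route as the paper: the paper also reduces everything to the vanishing of $\dd J_{t,R_1,R_2}$, deducing $\dd z_1=\dd z_2=0$ and then $x_1=y_1=x_2=y_2=0$ from the relation $x_i\dd x_i+y_i\dd y_i+z_i\dd z_i=0$ on each sphere, which is the intrinsic phrasing of your Lagrange-multiplier step. Your explicit verification that $\dd H_{t,R_1,R_2}$ also vanishes at the four pole-pairs (so that they are genuinely rank $0$) is a step the paper's proof leaves implicit, and is a reasonable addition.
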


\begin{proof}
	A critical point of rank $0$ must have $\dd J_{t,R_1,R_2}=0$, that is, $R_1\dd z_1+R_2\dd z_2=0,$ which implies $\dd z_1=\dd z_2=0.$ On the $p$-adic sphere $\sphere$ we have $x_1\dd x_1+y_1\dd y_1+z_1\dd z_1=0,$ so $\dd z_1=0$ implies $x_1=y_1=0$. The only points in the $p$-adic sphere with $x_1=y_1=0$ are the poles $(0,0,1)$ and $(0,0,-1)$. Analogously, the point $(x_2,y_2,z_2)$ must be $(0,0,1)$ or $(0,0,-1)$, and we are done.
\end{proof}

Now we compute the Hessians of $J_{t,R_1,R_2}$ and $H_{t,R_1,R_2}$ at the four rank $0$ critical points.

\begin{proposition}\label{prop:hessians}
	\letpprime. Let $\F$ be the parameter set in \eqref{eq:F} and let $(t,R_1,R_2)\in \F$. Let $F_{t,R_1,R_2}=(J_{t,R_1,R_2},H_{t,R_1,R_2}):\sphere\times\sphere\to(\Qp)^2$ be the $p$-adic coupled angular momentum system given by \eqref{eq:angular}. The Hessian of $J_{t,R_1,R_2}:\sphere\times\sphere\to\Qp$ at the critical point $(0,0,z_1,0,0,z_2)$, where $z_1,z_2\in\{1,-1\}$, in terms of the coordinates $(x_1,y_1,x_2,y_2)$, is
	\begin{equation}\label{eq:MJ}
		M_{J_{t,R_1,R_2}}(z_1,z_2)=\begin{pmatrix}
			-\frac{R_1}{z_1} & 0 & 0 & 0 \\
			0 & -\frac{R_1}{z_1} & 0 & 0 \\
			0 & 0 & -\frac{R_2}{z_2} & 0 \\
			0 & 0 & 0 & -\frac{R_2}{z_2}
		\end{pmatrix}
	\end{equation}
	The Hessian of $H_{t,R_1,R_2}:\sphere\times\sphere\to\Qp$ at the critical point $(0,0,z_1,0,0,z_2)$, where $z_1,z_2\in\{1,-1\}$, in terms of the coordinates $(x_1,y_1,x_2,y_2)$, is
	\begin{equation}\label{eq:MH}
		M_{H_{t,R_1,R_2}}(z_1,z_2)=\begin{pmatrix}
			\frac{t-tz_2-1}{z_1} & 0 & t & 0 \\
			0 & \frac{t-tz_2-1}{z_1} & 0 & t \\
			t & 0 & -\frac{tz_1}{z_2} & 0 \\
			0 & t & 0 & -\frac{tz_1}{z_2}
		\end{pmatrix}.
	\end{equation}
\end{proposition}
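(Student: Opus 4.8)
The plan is to use $(x_1,y_1,x_2,y_2)$ as genuine local analytic coordinates near each rank $0$ critical point $(0,0,z_1,0,0,z_2)$ with $z_1,z_2\in\{1,-1\}$, and to eliminate the remaining variables by writing $z_1=z_1(x_1,y_1)$ and $z_2=z_2(x_2,y_2)$ via the sphere equations. First I would justify that this is legitimate: since $z_i=\pm1\ne0$ at the pole, the $p$-adic implicit function theorem (equivalently Hensel's lemma applied to the square root of $1-x_i^2-y_i^2$) shows that the projection $(x_i,y_i,z_i)\mapsto(x_i,y_i)$ is a local analytic isomorphism onto a neighborhood of the origin, so each $z_i$ is a well-defined $p$-adic analytic function of $(x_i,y_i)$ with value the chosen sign at the origin. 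After substituting these into $J_{t,R_1,R_2}$ and $H_{t,R_1,R_2}$, the whole computation reduces to knowing the $1$-jet and $2$-jet of $z_i$ at the origin.

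Next I would compute those jets by implicit differentiation of $x_i^2+y_i^2+z_i^2=1$, which avoids any discussion of convergence of a binomial series. Differentiating in $x_i$ gives $x_i+z_i\,\partial z_i/\partial x_i=0$, so $\partial z_i/\partial x_i=-x_i/z_i$, which vanishes at the origin, and likewise $\partial z_i/\partial y_i=0$ there. Differentiating once more and evaluating at the origin (where the first derivatives are already zero) yields $1+z_i\,\partial^2 z_i/\partial x_i^2=0$, hence $\partial^2 z_i/\partial x_i^2=-1/z_i$, and symmetrically $\partial^2 z_i/\partial y_i^2=-1/z_i$ and $\partial^2 z_i/\partial x_i\partial y_i=0$. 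The decisive consequences are: at the critical point every first-order derivative of $z_1$ and $z_2$ vanishes, and the only nonzero second derivatives are the two pure ones, each equal to $-1/z_i$.

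With these jets the two Hessians follow from the chain and product rules. Since $J_{t,R_1,R_2}=R_1z_1+R_2z_2$ is linear in $z_1,z_2$, its Hessian is $R_1$ times the Hessian of $z_1$ in $(x_1,y_1)$ together with $R_2$ times the Hessian of $z_2$ in $(x_2,y_2)$, producing the diagonal matrix \eqref{eq:MJ}. For $H_{t,R_1,R_2}=(1-t)z_1+t(x_1x_2+y_1y_2+z_1z_2)$ I would treat the summands separately: the term $(1-t)z_1$ contributes $-(1-t)/z_1=(t-1)/z_1$ to each pure second derivative in the $(x_1,y_1)$ block; the bilinear terms $t\,x_1x_2$ and $t\,y_1y_2$ contribute the constant $t$ to the mixed partials $\partial^2/\partial x_1\partial x_2$ and $\partial^2/\partial y_1\partial y_2$; and $t\,z_1z_2$ contributes, by the product rule, $t\,z_2\cdot\partial^2 z_1/\partial x_1^2=-t z_2/z_1$ to the $(x_1,y_1)$ block and $t\,z_1\cdot\partial^2 z_2/\partial x_2^2=-t z_1/z_2$ to the $(x_2,y_2)$ block, while its cross-sphere mixed partials are proportional to products of first derivatives of $z_1$ and $z_2$ and hence vanish at the critical point. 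Collecting the $(x_1,y_1)$ diagonal entries as $(t-1)/z_1-t z_2/z_1=(t-tz_2-1)/z_1$ and assembling all blocks reproduces \eqref{eq:MH}.

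The computation is essentially bookkeeping, so I expect no serious obstacle; the two points that require care are confirming that $(x_1,y_1,x_2,y_2)$ really form local coordinates (so that the Hessian with respect to them is meaningful) and tracking the product-rule terms arising from $z_1z_2$, making sure that all cross-sphere mixed partials not coming from the explicit bilinear terms vanish because they are multiples of a first derivative of $z_1$ or $z_2$ at the critical point. Throughout, the identity $1/z_i=z_i$, valid since $z_i=\pm1$, can be used freely to put the entries in the form stated in \eqref{eq:MJ} and \eqref{eq:MH}.
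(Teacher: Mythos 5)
Your proof is correct and follows essentially the same route as the paper: both compute the needed jets of $z_i$ by implicit differentiation of $x_i^2+y_i^2+z_i^2=1$ (giving $\partial z_i/\partial x_i=-x_i/z_i$, hence vanishing first derivatives and pure second derivatives $-1/z_i$ at the pole) and then assemble $\dd^2 J_{t,R_1,R_2}$ and $\dd^2 H_{t,R_1,R_2}$ term by term, with the cross-sphere mixed partials of $z_1z_2$ vanishing as products of first derivatives. The only addition is your explicit justification via the implicit function theorem that $(x_1,y_1,x_2,y_2)$ are genuine local analytic coordinates near the pole, a point the paper leaves implicit.
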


\begin{proof}
	Since $(x_1)^2+(y_1)^2+(z_1)^2=1$, we have that
	$\dd z_1=-\frac{x_1}{z_1}\dd x_1-\frac{y_1}{z_1}\dd y_1,$
	the Hessian of $z_1$ in the coordinates $(x_1,y_1)$ is:
	\[\begin{pmatrix}
		-\frac{1}{z_1}-\frac{(x_1)^2}{(z_1)^3} & -\frac{x_1y_1}{(z_1)^3} \\
		-\frac{x_1y_1}{(z_1)^3} & -\frac{1}{z_1}-\frac{(y_1)^2}{(z_1)^3}
	\end{pmatrix}\]
	which after substituting $x_1=y_1=0$ gives
	\[\begin{pmatrix}
		-\frac{1}{z_1} & 0 \\
		0 & -\frac{1}{z_1}
	\end{pmatrix}.\]
	Now we have
	\begin{align*}
		\dd^2 J_{t,R_1,R_2}(x_1,y_1,z_1,x_2,y_2,z_2) & =\dd^2(R_1z_1+R_2z_2) \\
		& =\begin{pmatrix}
			-\frac{R_1}{z_1} & 0 & 0 & 0 \\
			0 & -\frac{R_1}{z_1} & 0 & 0 \\
			0 & 0 & -\frac{R_2}{z_2} & 0 \\
			0 & 0 & 0 & -\frac{R_2}{z_2}
		\end{pmatrix}
	\end{align*}
	and
	\begin{align*}
		\dd^2 H_{t,R_1,R_2}(x_1,y_1,z_1,x_2,y_2,z_2) & =(1-t)\dd^2 z_1+t\dd^2(x_1x_2+y_1y_2)+t\dd^2(z_1z_2) \\
		& =(1-t)\begin{pmatrix}
			-\frac{1}{z_1} & 0 & 0 & 0 \\
			0 & -\frac{1}{z_1} & 0 & 0 \\
			0 & 0 & 0 & 0 \\
			0 & 0 & 0 & 0
		\end{pmatrix}
		+t\begin{pmatrix}
			0 & 0 & 1 & 0 \\
			0 & 0 & 0 & 1 \\
			1 & 0 & 0 & 0 \\
			0 & 1 & 0 & 0
		\end{pmatrix} \\
		& \phantom{=}
		+t\begin{pmatrix}
			-\frac{z_2}{z_1} & 0 & 0 & 0 \\
			0 & -\frac{z_2}{z_1} & 0 & 0 \\
			0 & 0 & -\frac{z_1}{z_2} & 0 \\
			0 & 0 & 0 & -\frac{z_1}{z_2}
		\end{pmatrix} \\
		& =\begin{pmatrix}
			\frac{t-tz_2-1}{z_1} & 0 & t & 0 \\
			0 & \frac{t-tz_2-1}{z_1} & 0 & t \\
			t & 0 & -\frac{tz_1}{z_2} & 0 \\
			0 & t & 0 & -\frac{tz_1}{z_2}
		\end{pmatrix}.\qedhere
	\end{align*}
\end{proof}

Let
\[k=\frac{R_2}{R_1}.\]
By the condition on $R_1$ and $R_2$ in Definition \ref{def:angular}, we have that \[\ord_p(k)<0.\] The effect of changing $R_2$ in the coupled angular momentum system while leaving $k$ constant is to multiply $J_{t,R_1,R_2}$ and $\omega$ by a factor of $R_2$.

\begin{lemma}\label{lemma:eigenvectors}
	\letpprime. Let $\F$ be the parameter set in \eqref{eq:F}. Let $(t,R_1,R_2)\in\F$ and let $k=R_2/R_1$. Let $z_1,z_2\in\{1,-1\}$. Let $\lambda$ and $\mu$ be the roots of
	\[x^2+\ii(k(tz_2-t+1)+tz_1)x+kt(t-1)z_1.\]
	Let $M_{H_{t,R_1,R_2}}(z_1,z_2)$ be the matrix in \eqref{eq:MH}.
	Let $\Omega_{R_1,R_2}(z_1,z_2)$ be the matrix of $\omega=R_1\omega_1\oplus R_2\omega_2$ at $(0,0,z_1,0,0,z_2)$:
	\begin{equation}\label{eq:omega}
		\Omega_{R_1,R_2}(z_1,z_2)=\begin{pmatrix}
			0 & \frac{R_1}{z_1} & 0 & 0 \\
			-\frac{R_1}{z_1} & 0 & 0 & 0 \\
			0 & 0 & 0 & \frac{R_2}{z_2} \\
			0 & 0 & -\frac{R_2}{z_2} & 0
		\end{pmatrix}
	\end{equation}
	Let
	\[A=R_2\Omega_{R_1,R_2}(z_1,z_2)^{-1}M_{H_{t,R_1,R_2}}(z_1,z_2)=\begin{pmatrix}
		0 & -k(t-tz_2-1) & 0 & -ktz_1 \\
		k(t-tz_2-1) & 0 & ktz_1 & 0 \\
		0 & -tz_2 & 0 & tz_1 \\
		tz_2 & 0 & -tz_1 & 0
	\end{pmatrix}.\]
	Then the matrix $A$ has four eigenvalues $\lambda,-\lambda,\mu,-\mu$ and:
	\begin{itemize}
		\item the eigenvector of $A$ corresponding to $\lambda$ is $(tz_1-\ii\lambda,-\ii tz_1-\lambda,tz_2,-\ii tz_2)$;
		\item the eigenvector of $A$ corresponding to $-\lambda$ is $(tz_1-\ii\lambda,\ii tz_1+\lambda,tz_2,\ii tz_2)$;
		\item the eigenvector of $A$ corresponding to $\mu$ is $(tz_1-\ii\mu,-\ii tz_1-\mu,tz_2,-\ii tz_2)$;
		\item the eigenvector of $A$ corresponding to $-\mu$ is $(tz_1-\ii\mu,\ii tz_1+\mu,tz_2,\ii tz_2)$.
	\end{itemize}
\end{lemma}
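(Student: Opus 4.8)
The plan is to verify Lemma~\ref{lemma:eigenvectors} by a direct but structured computation exploiting the block structure of $A$. First I would observe that the matrix $A$ is built from $2\times 2$ blocks each proportional to $\begin{pmatrix}0&1\\-1&0\end{pmatrix}$ (up to the sign conventions), which is the key structural feature: conjugating by the complexifying change of basis that diagonalizes $\begin{pmatrix}0&1\\-1&0\end{pmatrix}$ into $\mathrm{diag}(\ii,-\ii)$ should split $A$ into two decoupled $2\times 2$ problems. Concretely, I would set up the complex coordinates $u_j=x_j+\ii\xi_j$ and $v_j=x_j-\ii\xi_j$ (for $j=1,2$), in which the action of $A$ on the $u$-subspace and on the $v$-subspace separates, each governed by a $2\times 2$ matrix whose entries are read off from the block structure.

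The second step is to compute the characteristic polynomials of these two $2\times 2$ blocks and check that together they give $\lambda,-\lambda,\mu,-\mu$ where $\lambda,\mu$ are the roots of the stated quadratic $x^2+\ii(k(tz_2-t+1)+tz_1)x+kt(t-1)z_1$. The natural bookkeeping is through the trace and determinant: I expect the characteristic polynomial of $A$ to factor as a quadratic in $x^2$, i.e. as $(x^2-\lambda^2)(x^2-\mu^2)$, so that $\lambda^2+\mu^2$ and $\lambda^2\mu^2$ can be matched against $\mathrm{tr}$ and $\det$ of the relevant blocks. Alternatively, since $A$ has the form where rows come in $\pm$-paired patterns, I would verify that the characteristic polynomial is even in $x$ up to the imaginary shift, which is exactly what forces the eigenvalues into the $\pm$ pairs and identifies the auxiliary quadratic as the one in the statement.

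The cleanest way to finish, however, is simply to \emph{verify the four eigenvector claims directly}: substitute each proposed eigenvector into $(A-\lambda I)\mathbf{v}=0$ (and likewise for $-\lambda,\mu,-\mu$) and check that the four scalar equations hold identically, using only that $\lambda$ (resp. $\mu$) satisfies the defining quadratic. This reduces the whole lemma to a finite algebraic identity: each of the four coordinate equations will collapse to a multiple of $\lambda^2+\ii(k(tz_2-t+1)+tz_1)\lambda+kt(t-1)z_1=0$, which holds by hypothesis. Since a matrix annihilating a nonzero vector on $(A-\lambda I)$ has $\lambda$ as an eigenvalue, producing four such vectors for four (generically distinct) scalars $\lambda,-\lambda,\mu,-\mu$ simultaneously exhibits the eigenvalues and certifies the eigenvectors. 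This approach has the advantage of being \emph{verification rather than derivation}, so it avoids any guesswork about where the eigenvectors come from.

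The main obstacle I anticipate is purely organizational rather than conceptual: keeping the four factors $k$, $t$, $z_1$, $z_2$ and the imaginary unit $\ii$ correctly tracked through the substitution, since the two factors have different scalings (the top two rows carry a factor of $k$ relative to the bottom two) and the signs attached to $z_1,z_2\in\{1,-1\}$ must be handled uniformly. I would mitigate this by checking the $\lambda$-eigenvector equation in full and then noting that the $-\lambda$, $\mu$, $-\mu$ cases follow by the symmetries $\lambda\mapsto-\lambda$ (which corresponds to swapping the $u$- and $v$-subspaces, i.e. conjugating the imaginary parts) and $\lambda\mapsto\mu$ (which is just relabeling which root of the same quadratic one uses), so that only one genuine computation is required and the remaining three reduce to bookkeeping.
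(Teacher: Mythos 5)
Your proposal is correct and takes essentially the same route as the paper: the paper also checks by direct computation that the characteristic polynomial of $A$ factors as the product of the stated quadratic and its mirror (giving the eigenvalues $\pm\lambda,\pm\mu$), and then certifies the eigenvectors by multiplying $A$ against one of the proposed vectors and reducing via the defining quadratic, with the remaining three cases handled as symmetric bookkeeping exactly as you suggest. Your complexification $u_j=x_j+\ii\xi_j$, $v_j=x_j-\ii\xi_j$ is merely a tidier way to organize the characteristic-polynomial computation that the paper performs without comment, so the two proofs are the same in substance.
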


\begin{proof}
	We can check by computation that the characteristic polynomial of $A$ is
	\[(x^2+\ii(k(tz_2-t+1)+tz_1)x+kt(t-1)z_1)(x^2-\ii(k(tz_2-t+1)+tz_1)x+kt(t-1)z_1).\]
	The roots of the first factor are $\lambda$ and $\mu$, and those of the second factor are their opposites. This proves the first part. For the eigenvectors, if we multiply $A$ times the first vector, we get
	\begin{align*}
		& \phantom{=}\;\; (-k(t-tz_2-1)(-\ii tz_1-\lambda)+\ii kt^2z_1z_2,k(t-tz_2-1)(tz_1-\ii\lambda)+kt^2z_1z_2,\lambda tz_2,-\ii\lambda tz_2) \\
		& =(k(t-tz_2-1)\lambda+\ii k(t-tz_2-1)tz_1+\ii kt^2z_1z_2, \\
		& \phantom{=}\;\; -\ii k(t-tz_2-1)\lambda+ k(t-tz_2-1)tz_1+kt^2z_1z_2,\lambda tz_2,-\ii\lambda tz_2) \\
		& =(-k(tz_2-t+1)\lambda+\ii k(t-1)tz_1,\ii k(tz_2-t+1)\lambda+ k(t-1)tz_1,\lambda tz_2,-\ii\lambda tz_2) \\
		& =(-\ii\lambda^2+tz_1\lambda,-\lambda^2-\ii tz_1\lambda,\lambda tz_2,-\ii\lambda tz_2) \\
		& =\lambda(tz_1-\ii\lambda,-\ii tz_1-\lambda,tz_2,-\ii tz_2).
	\end{align*}
	The same happens with the rest of vectors, with the corresponding eigenvalues.
\end{proof}

\section{Normal forms of $F_{t,R_1,R_2}$ at $P$ and $Q$}\label{sec:PQ}

Now we compute the linear normal forms of the coupled angular momentum system $F_{t,R_1,R_2}:\sphere\times\sphere\to(\Qp)^2$ given by \eqref{eq:angular} at the rank $0$ critical points. We first do this at the points $P$ and $Q$, which have $z_2=1$.

\begin{lemma}\label{lemma:eigenvalues}
	\letpprime. Let $\F$ be the parameter set in \eqref{eq:F} and let $(t,R_1,R_2)\in \F$. Let $M_{J_{t,R_1,R_2}}(z_1,z_2)$ and $M_{H_{t,R_1,R_2}}(z_1,z_2)$ be the matrices in \eqref{eq:MJ} and \eqref{eq:MH} respectively. Let $\Omega_{R_1,R_2}(z_1,z_2)$ be the matrix of $\omega=R_1\omega_1\oplus R_2\omega_2$ at $(0,0,z_1,0,0,z_2)$ as in equation \eqref{eq:omega}.
	Let $k=R_2/R_1$ and $z_1\in\{1,-1\}$. The characteristic polynomial of
	\begin{equation}\label{eq:omegaM}
		R_2\Omega_{R_1,R_2}(z_1,1)^{-1}M_{H_{t,R_1,R_2}}(z_1,1)
	\end{equation}
	has four roots \[\Big\{\lambda,-\lambda,\mu,-\mu\Big\}\in\Qp[\ii],\] such that
	\[\ord_p(\lambda)=\ord_p(k)\]
	and
	\[\ord_p(\mu)=\ord_p(t(t-1)).\]
\end{lemma}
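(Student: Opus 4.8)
The plan is to reduce the claim to a statement about the two roots of a single quadratic, and then read off their valuations from Vieta's formulas together with the ultrametric inequality. I expect the whole argument to be short; the only genuine content is isolating which valuation goes with which root.

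First I would invoke Lemma \ref{lemma:eigenvectors} with $z_2=1$. It identifies the four roots of the characteristic polynomial of the matrix in \eqref{eq:omegaM} as $\lambda,-\lambda,\mu,-\mu$, where $\lambda$ and $\mu$ are the two roots of
\[q(x)=x^2+\ii(k+tz_1)x+kt(t-1)z_1,\]
obtained by setting $z_2=1$ in the quadratic of Lemma \ref{lemma:eigenvectors} and using $tz_2-t+1=1$. Since $\ord_p(-\lambda)=\ord_p(\lambda)$ and likewise for $\mu$, it suffices to compute $\ord_p(\lambda)$ and $\ord_p(\mu)$ for the two roots of $q$.

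Next I would record the relevant valuations. Because $t\in\Zp$, both $t$ and $t-1$ lie in $\Zp$, so $\ord_p(t(t-1))\ge 0$; the coordinate $z_1\in\{1,-1\}$ is a unit; and $\ii$ is a unit in $\Qp[\ii]$ since $\ii^2=-1$ is a unit, whether the extension is trivial, unramified, or ramified. The parameter constraint $|R_2|_p>|R_1|_p$ gives $\ord_p(k)<0$, as already noted in the text. By Vieta's formulas applied to $q$,
\[\lambda+\mu=-\ii(k+tz_1),\qquad \lambda\mu=kt(t-1)z_1.\]
The ultrametric inequality then yields $\ord_p(\lambda+\mu)=\ord_p(k+tz_1)=\ord_p(k)$, since $\ord_p(k)<0\le\ord_p(tz_1)$ means the minimum of the two valuations is attained uniquely at $k$; and $\ord_p(\lambda\mu)=\ord_p(k)+\ord_p(t(t-1))$.

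Finally I would separate the two root valuations. Writing $a\le b$ for the unordered pair $\{\ord_p(\lambda),\ord_p(\mu)\}$, the product relation gives $a+b=\ord_p(k)+\ord_p(t(t-1))$. If $a=b$, then $\ord_p(\lambda+\mu)\ge a=\frac{1}{2}(\ord_p(k)+\ord_p(t(t-1)))$, which combined with $\ord_p(\lambda+\mu)=\ord_p(k)$ forces $\ord_p(k)\ge\ord_p(t(t-1))$, contradicting $\ord_p(k)<0\le\ord_p(t(t-1))$. Hence $a<b$; the sum relation then gives $\ord_p(\lambda+\mu)=a=\ord_p(k)$, and subtracting from the product relation gives $b=\ord_p(t(t-1))$. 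Relabeling so that $\lambda$ is the root of valuation $\ord_p(k)$ and $\mu$ the root of valuation $\ord_p(t(t-1))$ finishes the proof; equivalently, this is simply the Newton polygon of $q$, whose two segments have slopes $-\ord_p(t(t-1))$ and $-\ord_p(k)$. The only step requiring any care is the valuation of $\ii$ across the three cases of $p$, and that is harmless precisely because $\ii$ is always a unit, so the argument is uniform.
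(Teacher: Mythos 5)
Your valuation computation is correct, and it takes a genuinely different route from the paper's: where the paper expands the discriminant $\Delta=-(k+tz_1)^2-4kt(t-1)z_1$, shows $\ord_p(\Delta)=2\ord_p(k)$, and pins down the leading digit of a square root, you use only Vieta's formulas and the ultrametric inequality (equivalently, the Newton polygon of $q$). That part of the argument is sound, and it even survives the edge case $t\in\{0,1\}$, where $\ord_p(t(t-1))=\infty$ and $\mu=0$, with the usual conventions on infinite valuations, though a sentence on that case would help.

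However, there is a genuine gap: the lemma asserts more than the two valuations; it asserts that the four roots lie in $\Qp[\ii]$, and your proof never addresses this. Your reduction ``it suffices to compute $\ord_p(\lambda)$ and $\ord_p(\mu)$'' silently discards that claim. A priori the roots of $q\in\Qp[\ii][x]$ live in a quadratic extension of $\Qp[\ii]$, and the membership statement is precisely what the paper's discriminant analysis is designed to establish (a square root of $\Delta$ exists in $\Qp[\ii]$, with the leading digit of $-\ii k$). It is also the part of the lemma that gets used downstream: Corollary \ref{cor:normalform} needs $\lambda^2,\mu^2\in\Qp$ to land in class (1), and Lemmas \ref{lemma:eigenvectors-lambda} and \ref{lemma:eigenvectors-mu} reason about eigenvalues lying in $\Qp[\ii]$. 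Fortunately the gap can be closed inside your own framework: for $t\notin\{0,1\}$ you have shown that the two roots of $q$ have distinct valuations; since the valuation of the complete field $\Qp[\ii]$ extends uniquely to any algebraic extension, Galois conjugates over $\Qp[\ii]$ have equal valuation, so $\lambda$ and $\mu$ cannot be conjugate, $q$ is reducible over $\Qp[\ii]$, and both roots lie in $\Qp[\ii]$ --- this is exactly the factorization statement underlying the Newton polygon you invoke. For $t\in\{0,1\}$ the roots are $0$ and $-\ii(k+tz_1)$, visibly in $\Qp[\ii]$. With a sentence to this effect your proof is complete; as written, it proves strictly less than the statement.
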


\begin{proof}
	By Lemma \ref{lemma:eigenvectors}, $\lambda$ and $\mu$ are the roots of
	\[P(x)=x^2+\ii(k+tz_1)x+kt(t-1)z_1.\]
	The discriminant of this equation is
	\[-(k+tz_1)^2-4kt(t-1)z_1,\]
	which has order $2\ord_p(k)$, because $\ord_p(k)<0$ and $\ord_p(t)\ge 0$. Hence it has a square root in $\Qp[\ii]$ of order $\ord_p(k)$ with the same leading digit as $-\ii k$. We call $\lambda$ the root of $P(x)$ given by this square root of $\Delta$, and $\mu$ the other root. Now we have that $\ord_p(\lambda)=\ord_p(k)$ and
	\[\ord_p(\mu)=\ord_p(kt(t-1)z_1)-\ord_p(k)=\ord_p(t(t-1)).\qedhere\]
\end{proof}

\begin{corollary}\label{cor:normalform}
	\letpprime. Let $\F$ be the parameter set in \eqref{eq:F} and let $(t,R_1,R_2)\in \F$. Let $F_{t,R_1,R_2}=(J_{t,R_1,R_2},H_{t,R_1,R_2}):\sphere\times\sphere\to\Qp^2$ be the $p$-adic coupled angular momentum system given by expression \eqref{eq:angular}. Let $P,Q$ be the points defined in \eqref{eq:PQST}, which by Proposition \ref{prop:rank0} are rank zero non-degenerate critical points of $F_{t,R_1,R_2}$. Let $m\in\{P,Q\}$. Let $X_p$ be the set given in Definition \ref{def:classes}. If $t$ is different from $0$ and $1$, there exist a matrix $B\in\M_2(\Qp)$, $c_1,c_2\in X_p$ and local coordinates $(x,\xi,y,\eta)$ centered at $m$ such that the $p$-adic symplectic form is given by $\dd x\wedge\dd\xi+\dd y\wedge\dd\eta$ and
	\begin{align*}
		\widetilde{F}_{t,R_1,R_2}(x,\xi,y,\eta) & =B\circ(F_{t,R_1,R_2}(x,\xi,y,\eta)-F_{t,R_1,R_2}(m)) \\
		& =\frac{1}{2}(x^2+c_1\xi^2,y^2+c_2\eta^2)+\ocal((x,\xi,y,\eta)^3).
	\end{align*}
\end{corollary}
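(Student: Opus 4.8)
The plan is to reduce everything to a linear computation and then read off the class from the eigenvalue structure already established. By Lemma \ref{lemma:darboux} it suffices to produce the \emph{linear} normal form at $m$, since that lemma upgrades any linear normal form to a local one on a full neighborhood together with the standard symplectic form. So I would work entirely with the Hessians $M_{J_{t,R_1,R_2}}(z_1,1)$ and $M_{H_{t,R_1,R_2}}(z_1,1)$ from Proposition \ref{prop:hessians} and the symplectic matrix $\Omega_{R_1,R_2}(z_1,1)$ from \eqref{eq:omega}, with $z_1\in\{1,-1\}$ corresponding to $P$ and $Q$.

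First I would record non-degeneracy. By Lemma \ref{lemma:eigenvalues} the operator $R_2\Omega_{R_1,R_2}(z_1,1)^{-1}M_{H_{t,R_1,R_2}}(z_1,1)$ has eigenvalues $\pm\lambda,\pm\mu$ with $\ord_p(\lambda)=\ord_p(k)<0$ and $\ord_p(\mu)=\ord_p(t(t-1))$. Since $t\neq 0,1$ both orders are finite, and because they differ we get $\lambda^2\neq\mu^2$ and $\lambda,\mu\neq 0$; hence the four eigenvalues are distinct. Together with the diagonal shape of $M_{J_{t,R_1,R_2}}$ in \eqref{eq:MJ}, a generic combination of the two Hessians still has four distinct eigenvalues, so the non-degeneracy condition of \cite[Definition 10.2]{CrePel-williamson} holds and \cite[Theorem A]{CrePel-williamson} applies: the normal form is of class (1), (2) or (3).

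The heart of the argument is to show the class is (1). Here I would exploit the Galois action $\ii\mapsto-\ii$ on $\Qp[\ii]$. As noted in the proof of Lemma \ref{lemma:eigenvectors}, conjugating the characteristic polynomial sends the root set $\{\lambda,\mu\}$ to $\{-\lambda,-\mu\}$, so $\bar\lambda\in\{-\lambda,-\mu\}$. But $\ord_p(\bar\lambda)=\ord_p(\lambda)=\ord_p(k)$ while $\ord_p(-\mu)=\ord_p(t(t-1))\ge 0>\ord_p(k)$, which forces $\bar\lambda=-\lambda$ and likewise $\bar\mu=-\mu$. Thus each pair $\{\lambda,-\lambda\}$ and $\{\mu,-\mu\}$ is individually stable under conjugation --- exactly the elliptic pattern $\pm\ii\alpha,\pm\ii\beta$ rather than the focus--focus pattern --- so $\lambda^2,\mu^2\in\Qp$, and the two $\Qp[\ii]$-eigenplanes spanned by the eigenvectors of Lemma \ref{lemma:eigenvectors} descend to a pair of $\Qp$-subspaces $V_1,V_2$ with $(\Qp)^4=V_1\oplus V_2$, symplectically orthogonal and invariant under both $\Omega^{-1}M_{J_{t,R_1,R_2}}$ and $\Omega^{-1}M_{H_{t,R_1,R_2}}$. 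This decoupled symplectic splitting is the signature of class (1).

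It remains to put the system in the stated form and extract $c_1,c_2$. On each two-dimensional symplectic block the restrictions of $J_{t,R_1,R_2}$ and $H_{t,R_1,R_2}$ are commuting nonzero quadratic Hamiltonians, hence proportional (on a symplectic plane two commuting nonzero quadratics are proportional, with proportionality constant in $\Qp$); diagonalizing and normalizing gives $\tfrac12(x^2+c_1\xi^2)$ on $V_1$ and $\tfrac12(y^2+c_2\eta^2)$ on $V_2$, where $c_1,c_2\in X_p$ are the representatives of the square classes determined by $\lambda^2$ and $\mu^2$. Finally I would build $B\in\M_2(\Qp)$ by choosing in each row the $\Qp$-linear combination of $J_{t,R_1,R_2}$ and $H_{t,R_1,R_2}$ that annihilates the block not being normalized --- possible precisely because the restrictions are proportional on each block --- which yields the decoupled pair in the statement. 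The main obstacle is this descent step and the square-class bookkeeping needed to land the coefficients exactly in $X_p$: since $\Qp[\ii]$ is a field only for $p\equiv 3\bmod 4$ and for $p=2$ but splits when $p\equiv 1\bmod 4$, the conjugation argument and the identification of $c_1,c_2$ must be handled case by case, which is also where the differing counts in Theorem \ref{thm:number} ultimately originate.
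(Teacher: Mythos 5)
Your proposal is correct, and it rests on the same three pillars as the paper's own proof---Lemma \ref{lemma:darboux} to upgrade linear normal forms to local ones, Lemma \ref{lemma:eigenvalues} for the eigenvalue structure, and \cite[Theorem A]{CrePel-williamson} for the classification---but you argue the decisive step by a genuinely different route. The paper simply cites the eigenvalue criterion of Theorem A (``$\lambda^2,\mu^2\in\Qp$ corresponds to class (1)''), where the fact $\lambda^2,\mu^2\in\Qp$ is extracted from the \emph{proof} of Lemma \ref{lemma:eigenvalues}: there $-\Delta$ is shown to be a square in $\Qp$ via its leading term, so $\sqrt{\Delta}\in\ii\Qp$ and both roots lie in $\ii\Qp$. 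You instead obtain this from the \emph{statement} of the lemma by a Galois argument: conjugation sends the root set $\{\lambda,\mu\}$ to $\{-\lambda,-\mu\}$, and the order gap $\ord_p(\lambda)=\ord_p(k)<0\le\ord_p(t(t-1))=\ord_p(\mu)$ forces $\bar\lambda=-\lambda$ and $\bar\mu=-\mu$, hence $\lambda,\mu\in\ii\Qp$; this is a clean alternative, and it is exactly what is needed if one refuses to reopen the proof of Lemma \ref{lemma:eigenvalues}. Your second deviation is to re-derive, rather than cite, the implication ``this eigenvalue pattern $\Rightarrow$ class (1)'': Galois descent of the two eigenplanes to a symplectically orthogonal invariant splitting $(\Qp)^4=V_1\oplus V_2$, proportionality of the restrictions of $J_{t,R_1,R_2}$ and $H_{t,R_1,R_2}$ on each plane (the $\mathfrak{sl}_2$-centralizer fact), and then normalization of each block. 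This buys self-containedness, but note that the final block normalization---any nondegenerate quadratic with eigenvalues $\pm\lambda$, $\lambda^2\in\Qp$, is symplectically a multiple of $x^2+c\xi^2$ with $c\in X_p$---is itself the one-degree-of-freedom case of the cited classification, so the citation is not really avoided. One imprecision to correct: the square classes of $\lambda^2$ and $\mu^2$ do \emph{not} determine $c_1,c_2\in X_p$, since $X_p$ contains two elements in some square classes (for $p\equiv 3\bmod 4$, both $1$ and $p^2$ are squares); the eigenvalues pin down $c_1,c_2$ only up to this two-fold ambiguity, whose resolution requires the eigenvector criterion of \cite[Proposition 4.6]{CrePel-williamson} and is precisely the content of Lemmas \ref{lemma:eigenvectors-lambda} and \ref{lemma:eigenvectors-mu}. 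Since the corollary asserts only the existence of some $c_1,c_2\in X_p$, this slip does not affect the correctness of your proof of the statement as given.
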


\begin{proof}
	That there exist linear symplectic coordinates with this property follows from Lemma \ref{lemma:eigenvalues} and \cite[Theorem A]{CrePel-williamson}, because dividing the matrix \eqref{eq:omegaM} by the constant $R_2$, which is in $\Qp$, only divides the eigenvalues by the same constant and $\lambda^2,\mu^2\in\Qp$ corresponds to class (1) in the Williamson classification. The existence of local symplectic coordinates with the same property follows from Lemma \ref{lemma:darboux}.
\end{proof}

\begin{lemma}\label{lemma:eigenvectors-lambda}
	\letpprime. Let $\F$ be the parameter set in \eqref{eq:F} and let $(t,R_1,R_2)\in \F$. Let $F_{t,R_1,R_2}=(J_{t,R_1,R_2},H_{t,R_1,R_2}):\sphere\times\sphere\to\Qp^2$ be the $p$-adic coupled angular momentum system given by expression \eqref{eq:angular}. Let $P,Q$ be the points defined in \eqref{eq:PQST}, which by Proposition \ref{prop:rank0} are rank zero non-degenerate critical points of $F_{t,R_1,R_2}$. Then, if $t\notin\{0,1\}$, the value of $c_1$ in Corollary \ref{cor:normalform} is given as follows.
	\begin{enumerate}
		\item If $p\equiv 1\mod 4$ or $p=2$, then $c_1=1$, that is, the component is elliptic (i.e. of the form $(x^2+\xi^2)/2$).
		\item If $p\equiv 3\mod 4$, then $c_1=1$ if $\ord_p(R_1)$ is even and $c_1=p^2$ otherwise.
	\end{enumerate}
\end{lemma}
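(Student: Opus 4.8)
The plan is to pin down $c_1$ as an explicit square in $\Qp^*$ and then read off its representative in $X_p$. By Corollary \ref{cor:normalform} the point $m\in\{P,Q\}$ is of class (1), with normal form $\frac{1}{2}(x^2+c_1\xi^2,\,y^2+c_2\eta^2)+\ocal(3)$ and $c_1\in X_p$; moreover, by the construction underlying \cite[Theorem A]{CrePel-williamson}, $c_1$ is the $X_p$-class of $-(\lambda/R_2)^2$, where $\pm\lambda$ is the eigenvalue pair of \eqref{eq:omegaM} with $\ord_p(\lambda)=\ord_p(k)$ supplied by Lemma \ref{lemma:eigenvalues} (this is the pair attached to the first factor, matching the expected dependence on $R_1$; the sign is fixed by the normalisation in which $x^2+\xi^2$ has eigenvalue $\ii$, giving $-(\ii)^2=1$). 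So it suffices to determine the class of $-\lambda^2$ in $\Qp^*$ finely enough to locate it in $X_p$.

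The key step is to observe that, because $m$ has $z_2=1$, the eigenvalue $\lambda$ is purely imaginary. Indeed, Lemma \ref{lemma:eigenvalues} gives that $\lambda$ is a root of $P(x)=x^2+\ii(k+tz_1)x+kt(t-1)z_1$, whose discriminant is $-(a^2+4b)$ with $a=k+tz_1$ and $b=kt(t-1)z_1$. Writing $a^2+4b=a^2\bigl(1+4b/a^2\bigr)$ and using $\ord_p(k)<0\le\ord_p(t)$, the term $a^2$ dominates (of even order $2\ord_p(k)$, with leading digit that of $k^2$), while $\ord_p(4b/a^2)\ge 1$ for odd $p$ and $\ge 3$ for $p=2$; hence $1+4b/a^2$ is a square $1$-unit and $a^2+4b$ is a perfect square in $\Qp$. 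Therefore $\sqrt{\mathrm{disc}\,P}\in\ii\,\Qp$, so both roots of $P$ lie in $\ii\,\Qp$; writing $\lambda=\ii\gamma$ with $\gamma\in\Qp$ and $\ord_p(\gamma)=\ord_p(k)$, we obtain $c_1=-(\lambda/R_2)^2=(\gamma/R_2)^2$, a perfect square in $\Qp$ of order $2\bigl(\ord_p(k)-\ord_p(R_2)\bigr)=-2\ord_p(R_1)$.

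It remains to identify this square inside $X_p$, using that membership in $(\Qp^*)^2$ is invariant under the equivalence defining $X_p$. If $p\equiv1\pmod 4$ or $p=2$, a direct inspection shows that $1$ is the only perfect square among the elements of $X_p$, whence $c_1=1$; here it is essential that the sign produced a positive square $(\gamma/R_2)^2$ and not $-(\gamma/R_2)^2$, since $-1\in X_2$. If $p\equiv3\pmod 4$, the perfect squares in $X_p$ are exactly $1$ and $p^2$; being distinct elements of $X_p$ they lie in different classes, and they are separated by the order modulo $4$ (preserved by the equivalence, since fourth powers and inversion fix the partition $\{\ord\equiv0\}$ versus $\{\ord\equiv2\}$). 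As $\ord_p(c_1)=-2\ord_p(R_1)$ is $\equiv0\pmod4$ exactly when $\ord_p(R_1)$ is even, we get $c_1=1$ in that case and $c_1=p^2$ otherwise. I expect the main obstacle to be the middle step: proving that $\mathrm{disc}\,P$ is exactly $-1$ times a square in $\Qp$ (not merely to leading order), where the case $p=2$ requires the sharper bound $\ord_2(4b/a^2)\ge3$ to force a square $1$-unit, together with fixing the sign convention for $c_1$ so as to rule out the value $-1$ when $p=2$.
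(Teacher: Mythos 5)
Your argument for the cases $p\equiv 1\mod 4$ and $p=2$ is essentially sound and close to the paper's: all that is needed there is that $\lambda$ is $\ii$ times an element of $\Qp$ (your $1$-unit square computation, including the sharper bound $\ord_2(4b/a^2)\ge 3$, is a legitimate way to get what the paper extracts from the leading digit of the discriminant in Lemma \ref{lemma:eigenvalues}), because $c_1$ agrees with $-(\lambda/R_2)^2$ up to a square of $\Qp^*$ and $1$ is the only element of $X_p$ lying in that square class.

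The case $p\equiv 3\mod 4$ --- the only case in which the answer actually depends on $R_1$ --- has a genuine gap, located exactly at your opening claim that $c_1$ is ``the $X_p$-class of $-(\lambda/R_2)^2$'' in a sense fine enough to remember the order modulo $4$. What the normal form construction actually yields is the following: writing $H-H(m)=\beta_1 g_1+\beta_2 g_2+\ocal(3)$ in the normal coordinates, the eigenvalue pair attached to the first block is $\pm\beta_1\sqrt{-c_1}$ (after dividing out $R_2$), where $\beta_1\in\Qp^*$ is an entry of $B^{-1}$ that the construction does not normalize in any way. Hence $-(\lambda/R_2)^2=\beta_1^2\,c_1$: the eigenvalue determines $c_1$ only up to multiplication by an \emph{arbitrary} square. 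Since $1$ and $p^2$ lie in the same square class of $\Qp^*$, this is precisely not enough to separate them, and ``order mod $4$'' does not survive multiplication by an arbitrary square, because a square may have order $\equiv 2\mod 4$ (for instance $p^2$ itself). Concretely, the substitution $(\beta_1,c_1)\mapsto(p\beta_1,c_1/p^2)$ leaves the eigenvalue data unchanged while exchanging $c_1=p^2$ with $c_1=1$; which of the two is symplectically realizable is an additional arithmetic fact invisible to the spectrum. This is why the paper's proof cannot avoid the eigenvector criterion: it takes the eigenvector $v$ of $\lambda$ from Lemma \ref{lemma:eigenvectors}, computes the pairing $v\tr\Omega\bar v$ (of order $\ord_p(R_2k)=2\ord_p(R_2)-\ord_p(R_1)$), and applies \cite[Propositions 4.6 and 4.7]{CrePel-williamson} to conclude that $2\ii\lambda^2/(R_2^2\,v\tr\Omega\bar v)\in\DSq(\Qp,1)$ if and only if $\ord_p(R_1)$ is even. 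Your proposal never touches the eigenvectors or the symplectic pairing, so it assumes the hard part; the fact that your final formula matches the lemma is not evidence for the method, since the discrepancy factor $\beta_1^2$ having order divisible by $4$ is a \emph{consequence} of the lemma, not something available beforehand.
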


\begin{proof}
	If $p\equiv 1\mod 4$, then \[\Qp[\ii]=\Qp,\] which implies $\lambda\in\Qp$ and this component is elliptic (or hyperbolic, which is the same in the $p$-adic case).
	
	If $p=2$, a component with eigenvalues in $\Qp[\ii]$ must have a normal form $x^2+c_1\xi^2$ with a $c_1\in X_p$ such that $-c_1$ is the square of an element of $\Qp[\ii]$ outside $\Qp$. Among the elements of the set $X_p$, only $1$ satisfies the condition, hence $c_1=1$.
	
	Now we assume that $p\equiv 3\mod 4$. In this case, there are two elements $c_1\in X_p$ such that $-c_1$ is the square of an element of $\Qp[\ii]$: $1$ and $p^2$. In order to distinguish between them, we use the criterion in \cite[Proposition 4.6]{CrePel-williamson}. Let $A$ be the matrix in \eqref{eq:omegaM}. Lemma \ref{lemma:eigenvectors} gives us the eigenvector of $A$ corresponding to $\lambda$, which is
	\[(tz_1-\ii\lambda,-\ii tz_1-\lambda,t,-\ii t).\]
	We have that
	\begin{align*}
		v\tr\Omega \bar{v} & =(tz_1-\ii\lambda)R_1z_1(\ii tz_1+\lambda)-(-\ii tz_1-\lambda)R_1z_1(tz_1-\ii\lambda)+\ii R_2t^2+\ii R_2t^2 \\
		& =R_2\left(2\ii\frac{z_1}{k}(tz_1-\ii\lambda)^2+2\ii t^2\right).
	\end{align*}
	By Lemma \ref{lemma:eigenvalues}, $\ord_p(\lambda)=\ord_p(k)$ and $\ord_p(v\tr\Omega \bar{v})=\ord(R_2k)$.
	
	Now we use the formula in \cite[Proposition 4.6]{CrePel-williamson} with $a=b=\ii\lambda/R_2$. We need to check whether
	\[\frac{2a\lambda/R_2}{v\tr\Omega \bar{v}}=\frac{2\ii\lambda^2}{R_2^2v\tr\Omega \bar{v}}\]
	is in
	\begin{equation}\label{eq:eigenvectors-lambda}
		\DSq(\Qp,-\lambda^2)=\Big\{r^2-\lambda^2s^2:r,s\in\Qp\Big\},
	\end{equation}
	which is the same as $\DSq(\Qp,1)$. By \cite[Proposition 4.7]{CrePel-williamson}, it is in \eqref{eq:eigenvectors-lambda} if and only if $\ord_p(R_2k)$ is even, which is equivalent to saying that $\ord_p(R_1)$ is even. Hence, in this case we may take $c_1=1$, and if $\ord_p(R_1)$ is odd, we take $c_1=p^2$ instead.
\end{proof}

\begin{lemma}\label{lemma:eigenvectors-mu}
	\letpprime. Let $\F$ be the parameter set in \eqref{eq:F} and let $(t,R_1,R_2)\in \F$. Let $F_{t,R_1,R_2}=(J_{t,R_1,R_2},H_{t,R_1,R_2}):\sphere\times\sphere\to\Qp^2$ be the $p$-adic coupled angular momentum system given by expression \eqref{eq:angular}. Let $P,Q$ be the points defined in \eqref{eq:PQST}, which by Proposition \ref{prop:rank0} are rank zero non-degenerate critical points of $F_{t,R_1,R_2}$. Then, if $t\notin\{0,1\}$, the value of $c_2$ in Corollary \ref{cor:normalform} is given as follows.
	\begin{enumerate}
		\item If $p\equiv 1\mod 4$ or $p=2$, then $c_2=1$, that is, the component is elliptic (i.e. of the form $(y^2+\eta^2)/2$).
		\item If $p\equiv 3\mod 4$, then $c_2=1$ if $\ord_p(R_2)$ is even and $c_2=p^2$ otherwise.
	\end{enumerate}
\end{lemma}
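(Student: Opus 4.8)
The plan is to repeat the argument of Lemma \ref{lemma:eigenvectors-lambda} essentially verbatim, now with the eigenvalue $\mu$ and its eigenvector in place of $\lambda$. The two easy cases go through unchanged. If $p \equiv 1 \mod 4$ then $\Qp[\ii] = \Qp$, so $\mu \in \Qp$ and the $\mu$-component is elliptic (equivalently hyperbolic, which is the same $p$-adically), giving $c_2 = 1$. If $p = 2$, a component with eigenvalues in $\Qp[\ii]$ needs $c_2 \in X_2$ with $-c_2$ a square in $\Qp[\ii]$ lying outside $\Qp$, and $1$ is the only such element of $X_2$, so $c_2 = 1$.

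For $p \equiv 3 \mod 4$ the answer is $c_2 \in \{1, p^2\}$, and I would decide between them with the criterion of \cite[Proposition 4.6]{CrePel-williamson}. By Lemma \ref{lemma:eigenvectors} with $z_2 = 1$, the eigenvector of $A$ for $\mu$ is $v = (tz_1 - \ii\mu, -\ii tz_1 - \mu, t, -\ii t)$; since Lemma \ref{lemma:eigenvalues} forces $\lambda, \mu \in \ii\Qp$ (so that $\bar\mu = -\mu$), the same computation that produced the formula in Lemma \ref{lemma:eigenvectors-lambda} gives $v\tr\Omega\bar v = R_2\big(2\ii\frac{z_1}{k}(tz_1 - \ii\mu)^2 + 2\ii t^2\big)$. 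Taking $a = b = \ii\mu/R_2$ as before, I would test whether $\frac{2\ii\mu^2}{R_2^2 v\tr\Omega\bar v}$ lies in $\DSq(\Qp, -\mu^2)$; since $-\mu^2 = (\mu/\ii)^2$ is a nonzero square (as $t \notin \{0,1\}$ makes $\mu \neq 0$), this set equals $\DSq(\Qp, 1)$, and \cite[Proposition 4.7]{CrePel-williamson} reduces membership to the parity of $\ord_p(v\tr\Omega\bar v)$.

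The one genuinely new point — and the step I expect to be the crux — is the order computation, because the dominant term switches relative to the $\lambda$ case: here $\ord_p(tz_1 - \ii\mu)$ is large rather than small. To see this I would use the relations $\lambda + \mu = -\ii(k + tz_1)$ and $\lambda\mu = kt(t-1)z_1$ from Lemma \ref{lemma:eigenvectors} to expand the product and obtain the clean identity $(tz_1 - \ii\lambda)(tz_1 - \ii\mu) = -kt^2 z_1$. Since $\ord_p(\lambda) = \ord_p(k) < 0 \le \ord_p(t)$ gives $\ord_p(tz_1 - \ii\lambda) = \ord_p(k)$, this identity forces $\ord_p(tz_1 - \ii\mu) = 2\ord_p(t)$. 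Then $\ord_p\big(\frac{z_1}{k}(tz_1 - \ii\mu)^2\big) = -\ord_p(k) + 4\ord_p(t) > 2\ord_p(t) = \ord_p(t^2)$, so in $v\tr\Omega\bar v$ the $t^2$ term dominates and $\ord_p(v\tr\Omega\bar v) = \ord_p(R_2 t^2)$.

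Finally, $\ord_p(R_2 t^2) = \ord_p(R_2) + 2\ord_p(t)$ has the same parity as $\ord_p(R_2)$, so by Proposition 4.7 the tested quantity lies in $\DSq(\Qp, 1)$ exactly when $\ord_p(R_2)$ is even. This yields $c_2 = 1$ for $\ord_p(R_2)$ even and $c_2 = p^2$ otherwise, proving part (2). Apart from the dominance switch above, everything is a direct transcription of the proof of Lemma \ref{lemma:eigenvectors-lambda}.
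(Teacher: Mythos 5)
Your proof is correct and follows the paper's argument essentially verbatim: the same two easy cases, the same eigenvector from Lemma \ref{lemma:eigenvectors}, the same formula $v\tr\Omega\bar{v}=R_2\bigl(2\ii\frac{z_1}{k}(tz_1-\ii\mu)^2+2\ii t^2\bigr)$, the same test via \cite[Propositions 4.6 and 4.7]{CrePel-williamson}, and the same conclusion that the $t^2$ term dominates so that $\ord_p(v\tr\Omega\bar{v})=\ord_p(R_2)+2\ord_p(t)$. The only (harmless) difference is that you compute $\ord_p(tz_1-\ii\mu)=2\ord_p(t)$ exactly via the identity $(tz_1-\ii\lambda)(tz_1-\ii\mu)=-kt^2z_1$, whereas the paper only needs the lower bound $\ord_p(tz_1-\ii\mu)\ge\ord_p(t)$ coming from $\ord_p(\mu)=\ord_p(t(t-1))$; both suffice to show the first term has order strictly greater than $2\ord_p(t)$.
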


\begin{proof}
	If $p\equiv 1\mod 4$, then \[\Qp[\ii]=\Qp\] and $\mu\in\Qp$, and the component is elliptic.
	
	If $p=2$, then we are in the same situation as with the other component: among the values in $X_p$ (recall that $X_p$ was defined in Definition \ref{def:classes}), only $c_2=1$ satisfies that $-c_2$ is the square of an element of $\Qp[\ii]$ outside $\Qp$.
	
	If $p\equiv 3\mod 4$, this condition is satisfied by $c_2=1$ and $c_2=p^2$, so we need the eigenvector criterion. Let $A$ be the $p$-adic matrix in \eqref{eq:omegaM}. Lemma \ref{lemma:eigenvectors} gives us the eigenvector of $A$ corresponding to $\lambda$, which is
	\[(tz_1-\ii\mu,-\ii tz_1-\mu,t,-\ii t).\]
	We have that
	\begin{align*}
		v\tr\Omega \bar{v} & =(tz_1-\ii\mu)R_1z_1(\ii tz_1+\mu)-(-\ii tz_1-\mu)R_1z_1(tz_1-\ii\mu)+\ii R_2t^2+\ii R_2t^2 \\
		& =R_2\left(2\ii\frac{z_1}{k}(tz_1-\ii\mu)^2+2\ii t^2\right).
	\end{align*}
	By Lemma \ref{lemma:eigenvalues}, $\ord_p(\mu)=\ord_p(t(t-1))$, hence the first term has order greater or equal than $2\ord_p(t)-\ord_p(k)>2\ord_p(t)$, and the order of $v\tr\Omega \bar{v}$ is $\ord_p(R_2)+2\ord_p(t)$, which has the same parity as $\ord_p(R_2)$.
	
	Now we use the formula in \cite[Proposition 4.6]{CrePel-williamson} with $a=b=\ii\mu/R_2$. The result is
	\[\frac{2a\mu/R_2}{v\tr\Omega \bar{v}}=\frac{2\ii\mu^2}{R_2^2v\tr\Omega \bar{v}}.\]
	This is in \[\DSq(\Qp,-\mu^2)=\DSq(\Qp,1)\] if and only if $R_2$ has even order, hence $c_2=1$ is valid in that case, and otherwise $c_2=p^2$.
\end{proof}

\section{Normal forms of $F_{t,R_1,R_2}$ at $S$ and $T$ in the outer region}\label{sec:ST-outer}

In Section \ref{sec:PQ} we computed the types of the critical points $P$ and $Q$ in \eqref{eq:PQST} of the $p$-adic coupled angular momentum $F_{t,R_1,R_2}$ given in \eqref{eq:angular}. Now we proceed to study the points $S$ and $T$ in \eqref{eq:PQST}. Unlike for $P$ and $Q$, the behavior of $F_{t,R_1,R_2}$ at $S$ and $T$ will vary greatly depending on the parameters $k$ and $t$. In order to classify this behavior, we divide the space of possible parameters in three regions: outer, inner and limit (see Figure \ref{fig:regions}). In this section we focus on the outer region.

The aforementioned regions play a similar role as the regions into which the interval $[0,1]$ is divided in the real case in \cite[Proposition 2.5]{LeFPel}, though in that case there are only two regions: in the inner region $S$ is a focus-focus point and $T$ is elliptic-elliptic, while in the outer region both are elliptic-elliptic.

\begin{figure}
	\includegraphics[trim=15cm 0 15cm 0,scale=0.8]{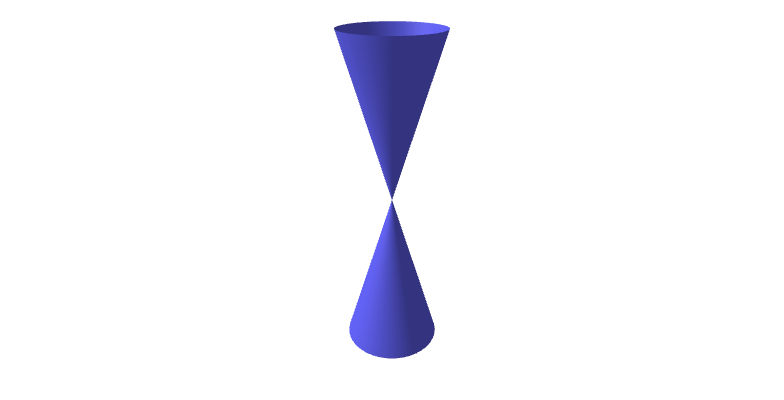}
	
	\includegraphics[scale=0.8]{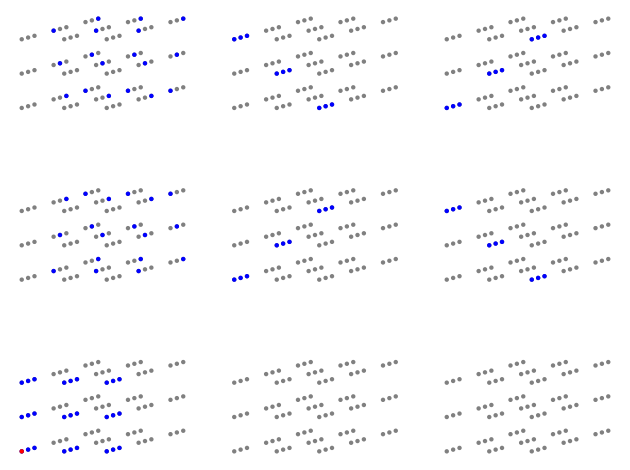}
	
	\caption{The fiber of a focus-focus critical value around the singularity in the real case (above) and the $p$-adic case (below, with the critical point in red). One of the rank $0$ critical points of the (real) coupled angular momentum system has this type for certain values of the parameter $t$; its $p$-adic equivalent $S=(1,0,0,-1,0,0)$ may or may not have the same type depending on the parameters.}
	\label{fig:pinched}
\end{figure}

\begin{definition}[Inner, outer and limit regions]\label{def:regions}
	\letpprime. We say that the pair of parameters $(k,t)\in \Qp\times\Zp$ \emph{is in the outer region} if $\ord_p(k(2t-1)^2)$ is negative, in the \emph{inner region} if this order is positive, and in the \emph{limit region} if this order is $0$.
\end{definition}

Note that, if $p=2$, all values of $(k,t)$ are in the outer region because $\ord_p(2t-1)$ is always zero. For $p=3$, a representation of the regions is in Figure \ref{fig:regions}.

\begin{figure}
	\includegraphics{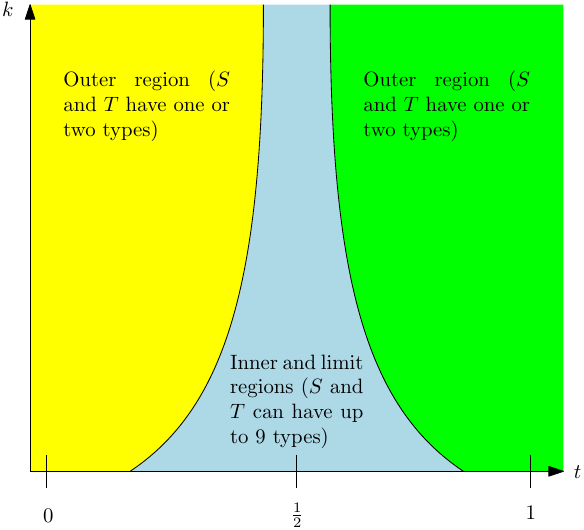}
	\includegraphics[scale=0.7]{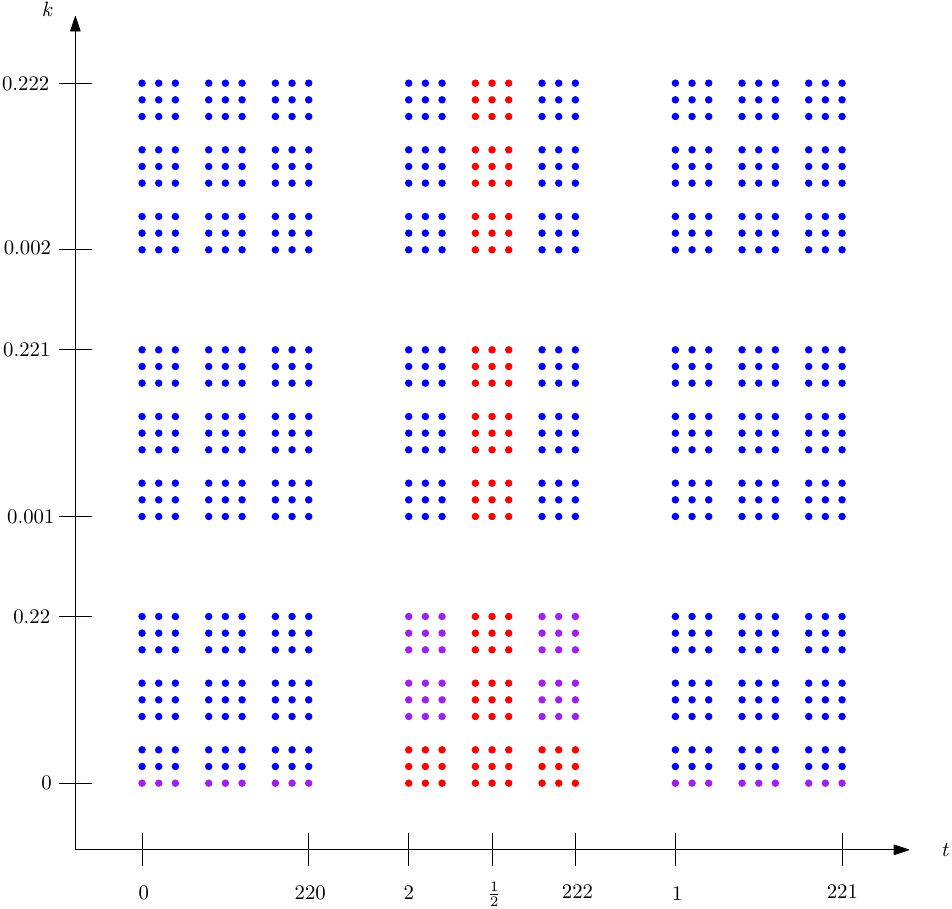}
	\caption{The outer (blue), inner (red) and limit (purple) regions of Definition \ref{def:regions} for $p=3$. The $x$-axis is the parameter $t$ where each point corresponds to a radius $1/27$, and the $y$-axis is the parameter $k$ where each point corresponds to a radius $1$. The bottom row of points is actually not a valid choice of parameters, because $k$ must not be in $\Zp$. The region in which the parameters are has an effect on the normal forms: in the inner and limit regions, according to Sections \ref{sec:ST-inner} and \ref{sec:ST-limit} respectively, $S$ and $T$ can have up to $5$ different forms, while in the outer region, according to Section \ref{sec:ST-outer}, they can only have one or two forms.}
	\label{fig:regions}
\end{figure}

We start with the outer region. As we will see, in that region the points $S$ and $T$ behave similarly to $P$ and $Q$. This is not surprising, because it is analogous to the situation in the real case.

\begin{lemma}\label{lemma:eigenvalues2}
	\letpprime. Let $\F$ be the parameter set in \eqref{eq:F} and let $(t,R_1,R_2)\in \F$. Let $M_{J_{t,R_1,R_2}}(z_1,z_2)$ and $M_{H_{t,R_1,R_2}}(z_1,z_2)$ be the two matrices in \eqref{eq:MJ} and \eqref{eq:MH} respectively. Let $\Omega_{R_1,R_2}(z_1,z_2)$ be the matrix of $\omega=R_1\omega_1\oplus R_2\omega_2$ as in equation \eqref{eq:omega}.
	Let $k=R_2/R_1$ and $z_1\in\{1,-1\}$. Suppose that $(k,t)$ is in the outer region as in Definition \ref{def:regions}. Then the characteristic polynomial of
	\begin{equation}\label{eq:omegaM2}
		R_2\Omega_{R_1,R_2}(z_1,-1)^{-1}M_{H_{t,R_1,R_2}}(z_1,-1)
	\end{equation}
	has four roots \[\Big\{\lambda,-\lambda,\mu,-\mu\Big\}\subset\Qp[\ii],\] such that
	\[\ord_p(\lambda)=\ord_p(k(2t-1))\]
	and
	\[\ord_p(\mu)=\ord_p\left(\frac{t(t-1)}{2t-1}\right).\]
\end{lemma}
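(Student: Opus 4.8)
The plan is to mimic the proof of Lemma \ref{lemma:eigenvalues} as closely as possible, since the structure is identical: we invoke Lemma \ref{lemma:eigenvectors} with $z_2=-1$ to obtain the factored characteristic polynomial, and then analyze the orders of the two roots $\lambda,\mu$ of the relevant quadratic via the $p$-adic Newton polygon (i.e.\ by comparing orders of the coefficients). First I would substitute $z_2=-1$ into the quadratic furnished by Lemma \ref{lemma:eigenvectors}, namely $x^2+\ii(k(tz_2-t+1)+tz_1)x+kt(t-1)z_1$, which becomes
\[
P(x)=x^2+\ii\big(k(1-2t)+tz_1\big)x+kt(t-1)z_1.
\]
The product of the roots is $\lambda\mu=kt(t-1)z_1$, so $\ord_p(\lambda)+\ord_p(\mu)=\ord_p(kt(t-1))$, and the sum is $\lambda+\mu=-\ii(k(1-2t)+tz_1)$.

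The key step is to determine $\ord_p(\lambda+\mu)$, and here the outer-region hypothesis enters decisively. Since $(k,t)$ is in the outer region, $\ord_p(k(2t-1)^2)<0$, i.e.\ $\ord_p(k(2t-1))<-\ord_p(2t-1)\le 0$, so in particular $\ord_p(k(2t-1))<0\le\ord_p(tz_1)$. Hence the two terms $k(1-2t)$ and $tz_1$ in the coefficient of $x$ have different orders, the former strictly smaller, so $\ord_p(\lambda+\mu)=\ord_p(k(2t-1))$. This is the term that controls the dominant root. Next I would pin down the discriminant $\Delta=-(k(1-2t)+tz_1)^2-4kt(t-1)z_1$: the first summand has order $2\ord_p(k(2t-1))$, while $\ord_p(4kt(t-1))=\ord_p(kt(t-1))\ge\ord_p(k)+\ord_p(t(t-1))$; because $\ord_p(k(2t-1))<0$ the squared term dominates, so $\ord_p(\Delta)=2\ord_p(k(2t-1))$, an even number. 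This guarantees $\Delta$ has a square root in $\Qp[\ii]$ of order $\ord_p(k(2t-1))$, and I would designate as $\lambda$ the root built from this square root (matching the leading behavior of $-\ii k(1-2t)$), so that $\ord_p(\lambda)=\ord_p(k(2t-1))$, exactly the claimed value.

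Finally, $\mu$ is recovered from the product formula: $\ord_p(\mu)=\ord_p(kt(t-1))-\ord_p(\lambda)=\ord_p(kt(t-1))-\ord_p(k(2t-1))=\ord_p\!\big(t(t-1)/(2t-1)\big)$, which is the second assertion. The roots of the conjugate factor in the characteristic polynomial are the opposites, giving the full spectrum $\{\lambda,-\lambda,\mu,-\mu\}$, and they lie in $\Qp[\ii]$ because each quadratic factor splits there.

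I expect the main obstacle to be the careful bookkeeping of orders in the discriminant argument, specifically verifying that the squared term genuinely dominates the $4kt(t-1)z_1$ term rather than merely tying with it; this is where the strict inequality $\ord_p(k(2t-1)^2)<0$ — as opposed to the inner or limit cases — must be used in full force, and where one must keep track of the extra factor $\ord_p(2t-1)$ that distinguishes this lemma from Lemma \ref{lemma:eigenvalues}. A secondary subtlety is confirming that the choice of which root to call $\lambda$ is consistent and that $\Delta$'s square root indeed lands in $\Qp[\ii]$ with the predicted order; this follows as in Lemma \ref{lemma:eigenvalues} once $\ord_p(\Delta)$ is shown to be even, but it warrants an explicit remark that the dominant-term analysis forces evenness automatically.
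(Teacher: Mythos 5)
Your proposal is correct and follows essentially the same route as the paper's proof: both reduce to the quadratic furnished by Lemma \ref{lemma:eigenvectors}, use the outer-region hypothesis to show the squared term $-(k(1-2t)+tz_1)^2$ dominates the discriminant (so $\ord_p(\Delta)=2\ord_p(k(2t-1))$ and $\sqrt{\Delta}\in\Qp[\ii]$), and recover $\ord_p(\mu)$ from the product of the roots. The one imprecise phrase is ``because $\ord_p(k(2t-1))<0$ the squared term dominates'': what the dominance actually needs is $\ord_p(k(2t-1)^2)<0$, since one must compare $2\ord_p(k(2t-1))=\ord_p(k)+\ord_p(k(2t-1)^2)$ against $\ord_p(kt(t-1))\ge\ord_p(k)$ (with $\ord_p(k(2t-1))<0$ alone the two orders can tie, as in the limit region), but you correctly identify this stronger hypothesis as the essential ingredient in your closing paragraph.
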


\begin{proof}
	By Lemma \ref{lemma:eigenvectors}, $\lambda$ and $\mu$ are the roots of
	\[P(x)=x^2+\ii(k(1-2t)+tz_1)x+kt(1-t)z_1.\]
	The discriminant of this equation is
	\[\Delta=-(k(1-2t)+tz_1)^2-4kt(1-t)z_1.\]
	The order of $k(1-2t)+tz_1$ is that of $k(2t-1)$, because this has negative order and $t$ has nonnegative order. Hence, the order of $(k(1-2t)+tz_1)^2$ is that of $k^2(2t-1)^2$. The order of $kt(1-t)$ is at least that of $k$, which is bigger, because we are in the outer region.
	Hence, $\ord_p(\Delta)=2\ord_p(k(2t-1))$ and it has a square root in $\Qp[\ii]$ of order $\ord_p(k(2t-1))$ with the same leading digit as $\ii k(2t-1)$. We call $\lambda$ the root of $P(x)$ given by this root, and $\mu$ the other root. Now we have that $\ord_p(\lambda)=\ord_p(k(2t-1))$ and
	\[\ord_p(\mu)=\ord_p(kt(t-1)z_1)-\ord_p(k(2t-1))=\ord_p\left(\frac{t(t-1)}{2t-1}\right).\qedhere\]
\end{proof}

See Figure \ref{fig:orders} for a representation of the orders of $\lambda$ and $\mu$ in terms of how close is $t$ to $1/2$.

\begin{figure}
	\includegraphics[scale=0.8]{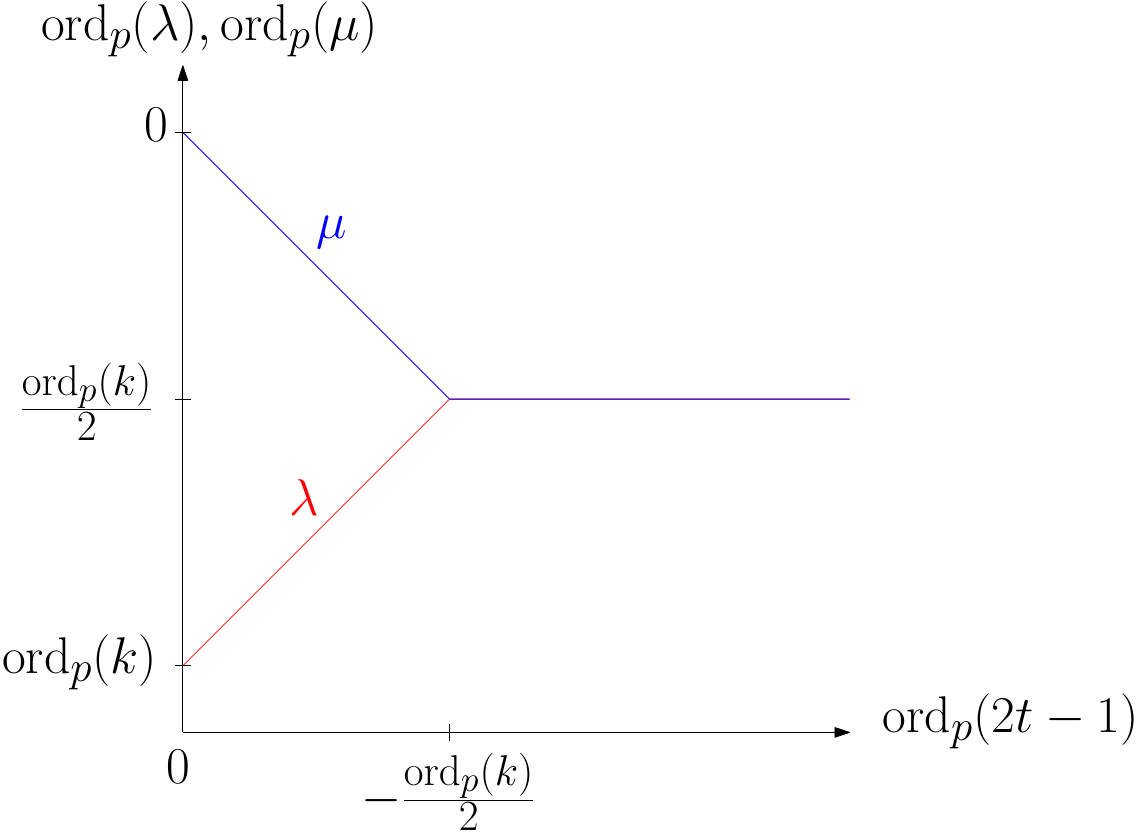}
	\includegraphics[scale=0.8]{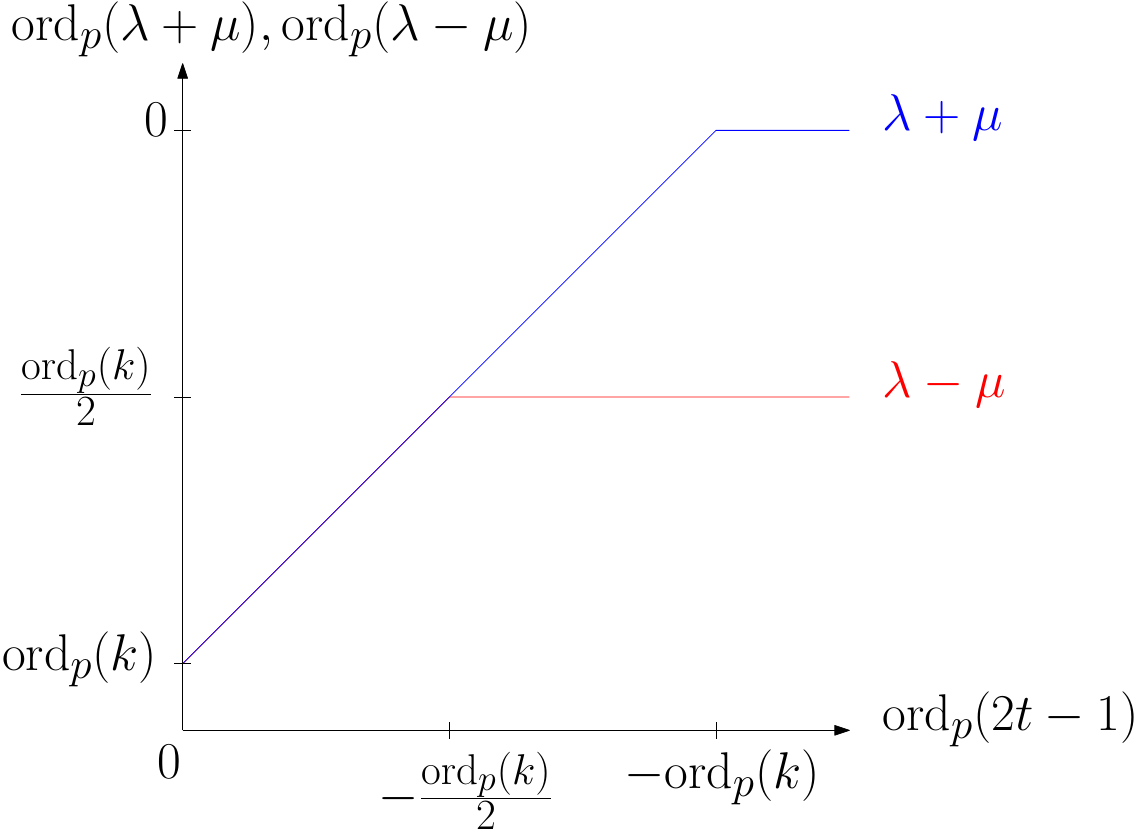}
	\caption{Representation of the orders of the eigenvalues $\lambda$ and $\mu$ (first graphic) and of $\lambda+\mu$ and $\lambda-\mu$ (second graphic) in terms of the order of $2t-1$. The part at the left of $-\ord(k)/2$ is the outer region, and the part at the right is the inner region. At the points where one of the graphs changes slope, the orders may not be correct, because there may be a cancellation between terms in a sum with the same order, resulting in a total with greater order than the terms (this is what happens in the limit region).}
	\label{fig:orders}
\end{figure}

\begin{corollary}\label{cor:normalform2}
	\letpprime. Let $\F$ be the parameter set in \eqref{eq:F} and let $(t,R_1,R_2)\in \F$. Let $F_{t,R_1,R_2}=(J_{t,R_1,R_2},H_{t,R_1,R_2}):\sphere\times\sphere\to\Qp^2$ be the $p$-adic coupled angular momentum system given by expression \eqref{eq:angular}. Let $S,T$ be the points given in \eqref{eq:PQST}, which by Proposition \ref{prop:rank0} are rank zero non-degenerate critical points of $F_{t,R_1,R_2}$. Let $m\in\{S,T\}$. Let $X_p$ be the set given in Definition \ref{def:classes}. If $t\notin\{0,1\}$ and $(k,t)$ is in the outer region of Definition \ref{def:regions}, there exist a matrix $B\in\M_2(\Qp)$, $c_1,c_2\in X_p$ and local coordinates $(x,\xi,y,\eta)$ centered at $m$ such that the $p$-adic analytic symplectic form is given by $\dd x\wedge\dd\xi+\dd y\wedge\dd\eta$ and
	\begin{align*}
		\widetilde{F}_{t,R_1,R_2}(x,\xi,y,\eta) & =B\circ(F_{t,R_1,R_2}(x,\xi,y,\eta)-F_{t,R_1,R_2}(m)) \\
		& =\frac{1}{2}(x^2+c_1\xi^2,y^2+c_2\eta^2)+\ocal((x,\xi,y,\eta)^3).
	\end{align*}
\end{corollary}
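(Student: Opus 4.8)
The plan is to mirror exactly the argument used for $P$ and $Q$ in Corollary \ref{cor:normalform}, now drawing on Lemma \ref{lemma:eigenvalues2} in place of Lemma \ref{lemma:eigenvalues}. First I would invoke Lemma \ref{lemma:eigenvalues2}: since $(k,t)$ lies in the outer region and $t\notin\{0,1\}$, the matrix in \eqref{eq:omegaM2} has four roots $\{\lambda,-\lambda,\mu,-\mu\}\subset\Qp[\ii]$, organized as two $\pm$ pairs. The key structural observation is that the eigenvalues come in opposite pairs and that $\lambda^2,\mu^2\in\Qp$; this is precisely the algebraic signature of \emph{class (1)} in the Williamson classification of \cite[Theorem A]{CrePel-williamson}.

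Next I would feed this eigenvalue data into \cite[Theorem A]{CrePel-williamson} to produce linear symplectic coordinates $(x,\xi,y,\eta)$ in which the symplectic form is $\dd x\wedge\dd\xi+\dd y\wedge\dd\eta$ and the quadratic parts of the two components take the form $\tfrac{1}{2}(x^2+c_1\xi^2)$ and $\tfrac{1}{2}(y^2+c_2\eta^2)$ with $c_1,c_2\in X_p$. The subtlety, exactly as in the proof of Corollary \ref{cor:normalform}, is that the matrix \eqref{eq:omegaM2} carries the extra scaling factor $R_2$; I would note that dividing by the constant $R_2\in\Qp$ simply rescales all four eigenvalues by the same factor, leaving their pairing structure and the property $\lambda^2,\mu^2\in\Qp$ intact, so the class (1) conclusion is unaffected. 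This yields the desired normal form at the level of \emph{linear} symplectic coordinates.

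Finally I would upgrade from linear to genuine local coordinates. Here I invoke Lemma \ref{lemma:darboux}, which guarantees that the linear normal form furnished by \cite[Theorem A]{CrePel-williamson} is in fact realized in local symplectic coordinates around $m$, with the discrepancy absorbed into the $\ocal((x,\xi,y,\eta)^3)$ term and a matrix $B\in\M_2(\Qp)$ implementing the linear combination of components. Combining these steps gives the stated expression for $\widetilde{F}_{t,R_1,R_2}$.

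I expect no genuine obstacle here, since the proof is a direct transcription of Corollary \ref{cor:normalform}: the only substantive input that differs is which lemma supplies the eigenvalue orders, and the outer-region hypothesis is exactly what makes Lemma \ref{lemma:eigenvalues2} applicable. The one point deserving a word of care is verifying that the $R_2$-rescaling argument goes through identically for $S$ and $T$ (where $z_2=-1$) as it did for $P$ and $Q$ (where $z_2=1$); but since that argument depends only on $R_2$ being a nonzero constant in $\Qp$ and not on the sign of $z_2$, it transfers without change.
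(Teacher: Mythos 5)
Your proposal is correct and follows essentially the same route as the paper's own proof: Lemma \ref{lemma:eigenvalues2} together with \cite[Theorem A]{CrePel-williamson} yields the class (1) linear normal form (the $R_2$-rescaling remark matching the one made for $P$ and $Q$), and Lemma \ref{lemma:darboux} upgrades it to local symplectic coordinates.
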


\begin{proof}
	That there exist linear symplectic coordinates follows from Lemma \ref{lemma:eigenvalues2} and \cite[Theorem A]{CrePel-williamson}, because $\lambda^2,\mu^2\in\Qp$ corresponds to class (1) in the Williamson classification. The existence of local symplectic coordinates follows from Lemma \ref{lemma:darboux}.
\end{proof}

\begin{lemma}\label{lemma:eigenvectors-lambda2}
	\letpprime. Let $\F$ be the parameter set in \eqref{eq:F} and let $(t,R_1,R_2)\in \F$. Let $F_{t,R_1,R_2}=(J_{t,R_1,R_2},H_{t,R_1,R_2}):\sphere\times\sphere\to\Qp^2$ be the $p$-adic coupled angular momentum system given by expression \eqref{eq:angular}. Let $S,T$ be the points defined in \eqref{eq:PQST}, which by Proposition \ref{prop:rank0} are rank zero non-degenerate critical points of $F_{t,R_1,R_2}$. Then, if $(k,t)$ is in the outer region and $t\notin\{0,1\}$, the value of $c_1$ in Corollary \ref{cor:normalform} is given as follows.
	\begin{enumerate}
		\item If $p\equiv 1\mod 4$ or $p=2$, then $c_1=1$, that is, the component is elliptic (i.e. of the form $(x^2+\xi^2)/2$).
		\item If $p\equiv 3\mod 4$, then $c_1=1$ if $\ord_p(R_1)$ is even and $c_1=p^2$ otherwise.
	\end{enumerate}
\end{lemma}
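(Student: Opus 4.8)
The plan is to mirror the proof of Lemma \ref{lemma:eigenvectors-lambda} for $P$ and $Q$, now feeding in the outer‑region eigenvalue estimate of Lemma \ref{lemma:eigenvalues2} and carefully tracking the sign change produced by $z_2=-1$. The two extreme cases require no new work. If $p\equiv 1\mod 4$ then $\Qp[\ii]=\Qp$, so $\lambda\in\Qp$ and the component is elliptic, forcing $c_1=1$; if $p=2$, then among the elements of $X_p$ only $c_1=1$ has the property that $-c_1$ is the square of an element of $\Qp[\ii]$ outside $\Qp$, exactly as in Lemma \ref{lemma:eigenvectors-lambda}. This reduces everything to $p\equiv 3\mod 4$, where precisely the two values $1$ and $p^2$ of $X_p$ are compatible with $\lambda\in\Qp[\ii]\setminus\Qp$, and the eigenvector criterion of \cite[Proposition 4.6]{CrePel-williamson} is needed to choose between them.

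For $p\equiv 3\mod 4$ I would take the $\lambda$-eigenvector supplied by Lemma \ref{lemma:eigenvectors} with $z_2=-1$, namely $v=(tz_1-\ii\lambda,-\ii tz_1-\lambda,-t,\ii t)$, and compute $v\tr\Omega\bar v$ for $\Omega=\Omega_{R_1,R_2}(z_1,-1)$. The first two coordinates of $v$ and the upper–left block of $\Omega$ are identical to the $P,Q$ situation, so that part again contributes $R_2\cdot 2\ii\frac{z_1}{k}(tz_1-\ii\lambda)^2$; the only difference is that both the lower–right block of $\Omega$ and the last two coordinates of $v$ change sign with $z_2=-1$, so the remaining term flips sign and one obtains
\[v\tr\Omega\bar v=R_2\left(2\ii\frac{z_1}{k}(tz_1-\ii\lambda)^2-2\ii t^2\right).\]
By Lemma \ref{lemma:eigenvalues2} we have $\ord_p(\lambda)=\ord_p(k(2t-1))$, so the first term has order $\ord_p(k(2t-1)^2)$, which is negative precisely because $(k,t)$ lies in the outer region (Definition \ref{def:regions}), while the second term has order $2\ord_p(t)\ge 0$. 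Hence the first term dominates and $\ord_p(v\tr\Omega\bar v)=\ord_p(R_2k(2t-1)^2)$.

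It then remains to apply \cite[Proposition 4.6]{CrePel-williamson} with $a=b=\ii\lambda/R_2$ and test whether
\[\frac{2\ii\lambda^2}{R_2^2\,v\tr\Omega\bar v}\in\DSq(\Qp,-\lambda^2)=\DSq(\Qp,1).\]
A direct order computation gives $\ord_p(k)-3\ord_p(R_2)$ for this quantity, which has the same parity as $\ord_p(R_1)$; by \cite[Proposition 4.7]{CrePel-williamson} membership holds if and only if this order is even, i.e. if and only if $\ord_p(R_1)$ is even, yielding $c_1=1$ in that case and $c_1=p^2$ otherwise.

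The step I expect to be the genuine obstacle is the dominance argument in the third paragraph: the sign flip of the term $-2\ii t^2$ is harmless exactly because the outer‑region hypothesis guarantees that it has strictly larger order than the leading term $2\ii\frac{z_1}{k}(tz_1-\ii\lambda)^2$, so neither the order nor the relevant parity of $v\tr\Omega\bar v$ is affected. This is the one place where the argument depends on more than the $P,Q$ computation, and it is where the outer‑region assumption is essential; in the inner and limit regions the two terms become comparable and the conclusion changes, which is why those regions are treated separately.
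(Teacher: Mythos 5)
Your proposal is correct and follows essentially the same route as the paper's proof: the same reduction to $p\equiv 3\bmod 4$, the same eigenvector $v=(tz_1-\ii\lambda,-\ii tz_1-\lambda,-t,\ii t)$ from Lemma \ref{lemma:eigenvectors}, the same computation $v\tr\Omega\bar v=R_2\bigl(2\ii\tfrac{z_1}{k}(tz_1-\ii\lambda)^2-2\ii t^2\bigr)$, and the same application of \cite[Propositions 4.6 and 4.7]{CrePel-williamson} with $a=b=\ii\lambda/R_2$, leading to the parity of $\ord_p(R_1)$. The only difference is that you spell out the dominance argument (the outer-region hypothesis forcing $\ord_p(k(2t-1)^2)<0\le 2\ord_p(t)$) which the paper leaves implicit, and this is indeed exactly where the outer-region assumption enters.
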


\begin{proof}
	If $p\equiv 1\mod 4$, then \[\Qp[\ii]=\Qp,\] which implies $\lambda\in\Qp$ and this component is elliptic (or hyperbolic, which is the same in the $p$-adic case).
	
	If $p=2$, a component with eigenvalues in $\Qp[\ii]$ must have a normal form $x^2+c_1\xi^2$ with a $c_1\in X_p$ such that $-c_1$ is the square of an element of $\Qp[\ii]$ outside $\Qp$. Among the elements of the set $X_p$, only $1$ satisfies the condition, hence $c_1=1$.
	
	Now we assume that $p\equiv 3\mod 4$. In this case, there are two elements $c_1\in X_p$ such that $-c_1$ is the square of an element of $\Qp[\ii]$: $1$ and $p^2$. In order to distinguish between them, we use the criterion in \cite[Proposition 4.6]{CrePel-williamson}. Let $A$ be the matrix in \eqref{eq:omegaM2}. Lemma \ref{lemma:eigenvectors} gives us the eigenvector of $A$ corresponding to $\lambda$, which is
	\[(tz_1-\ii\lambda,-\ii tz_1-\lambda,-t,\ii t).\]
	We have that
	\begin{align*}
		v\tr\Omega \bar{v} & =(tz_1-\ii\lambda)R_1z_1(\ii tz_1+\lambda)-(-\ii tz_1-\lambda)R_1z_1(tz_1-\ii\lambda)-\ii R_2t^2-\ii R_2t^2 \\
		& =R_2(2\ii\frac{z_1}{k}(tz_1-\ii\lambda)^2-2\ii t^2).
	\end{align*}
	By Lemma \ref{lemma:eigenvalues2}, $\ord_p(\lambda)=\ord_p(k(2t-1))$ and $\ord_p(v\tr\Omega \bar{v})=\ord_p(kR_2(2t-1)^2)$.
	
	Now we use the formula in \cite[Proposition 4.6]{CrePel-williamson} with $a=b=\ii\lambda/R_2$. We need to check whether
	\[\frac{2a\lambda/R_2}{v\tr\Omega \bar{v}}=\frac{2\ii\lambda^2}{R_2^2v\tr\Omega \bar{v}}\]
	is in
	\begin{equation}\label{eq:eigenvectors-lambda2}
		\DSq(\Qp,-\lambda^2)=\Big\{r^2-\lambda^2s^2:r,s\in\Qp\Big\},
	\end{equation}
	which is the same as $\DSq(\Qp,1)$. By \cite[Proposition 4.7]{CrePel-williamson}, it is in \eqref{eq:eigenvectors-lambda2} if and only if $\ord_p(kR_2(2t-1)^2)$ is even, that is, exactly when $\ord_p(R_1)$ is even. Hence, in this case we may take $c_1=1$, and if $\ord_p(R_1)$ is odd, we take $c_1=p^2$ instead.
\end{proof}

\begin{lemma}\label{lemma:eigenvectors-mu2}
	\letpprime. Let $\F$ be the parameter set in \eqref{eq:F} and let $(t,R_1,R_2)\in \F$. Let $F_{t,R_1,R_2}=(J_{t,R_1,R_2},H_{t,R_1,R_2}):\sphere\times\sphere\to\Qp^2$ be the $p$-adic coupled angular momentum system given by expression \eqref{eq:angular}. Let $S,T$ be the points defined in \eqref{eq:PQST}, which by Proposition \ref{prop:rank0} are rank zero non-degenerate critical points of $F_{t,R_1,R_2}$. Then, if $(k,t)$ is in the outer region and $t\notin\{0,1\}$, the value of $c_2$ in Corollary \ref{cor:normalform} is given as follows.
	\begin{enumerate}
		\item If $p\equiv 1\mod 4$ or $p=2$, then $c_2=1$, that is, the component is elliptic (i.e. of the form $(y^2+\eta^2)/2$).
		\item If $p\equiv 3\mod 4$, then $c_2=1$ if $\ord_p(R_2)$ is even and $c_2=p^2$ otherwise.
	\end{enumerate}
\end{lemma}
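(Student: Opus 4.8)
The plan is to mirror the arguments of Lemmas \ref{lemma:eigenvectors-mu} and \ref{lemma:eigenvectors-lambda2}, now applied to the eigenvalue $\mu$ of the matrix \eqref{eq:omegaM2} (so that $z_2=-1$) under the outer-region hypothesis. First I would dispatch the easy cases. If $p\equiv 1\mod 4$ then $\Qp[\ii]=\Qp$, hence $\mu\in\Qp$ and the second component is elliptic, forcing $c_2=1$. If $p=2$, which by Definition \ref{def:regions} always lies in the outer region, then among the elements of $X_p$ only $c_2=1$ has the property that $-c_2$ is the square of an element of $\Qp[\ii]$ outside $\Qp$, exactly as for the first component in Lemma \ref{lemma:eigenvectors-lambda2}; so again $c_2=1$.

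The substantive case is $p\equiv 3\mod 4$, where both $c_2=1$ and $c_2=p^2$ satisfy the eligibility condition and must be separated by the eigenvector criterion of \cite[Proposition 4.6]{CrePel-williamson}. I would take the eigenvector of $A$ associated to $\mu$ furnished by Lemma \ref{lemma:eigenvectors} with $z_2=-1$, namely $v=(tz_1-\ii\mu,-\ii tz_1-\mu,-t,\ii t)$, and compute, just as in Lemma \ref{lemma:eigenvectors-lambda2} with $\lambda$ replaced by $\mu$,
\[v\tr\Omega\bar v=R_2\Big(2\ii\tfrac{z_1}{k}(tz_1-\ii\mu)^2-2\ii t^2\Big).\]
Then, feeding $a=b=\ii\mu/R_2$ into \cite[Proposition 4.6]{CrePel-williamson}, I would decide whether the quantity $\tfrac{2\ii\mu^2}{R_2^2\,v\tr\Omega\bar v}$ lies in $\DSq(\Qp,-\mu^2)=\DSq(\Qp,1)$; by \cite[Proposition 4.7]{CrePel-williamson} this membership is governed solely by the parity of its $p$-adic order.

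The crux, and the step I expect to require the most care, is the order computation of $v\tr\Omega\bar v$. In the $P,Q$ case of Lemma \ref{lemma:eigenvectors-mu} one had $\ord_p(\mu)=\ord_p(t(t-1))\ge\ord_p(t)$, so the $t^2$ term trivially dominated. In the outer region Lemma \ref{lemma:eigenvalues2} only gives $\ord_p(\mu)=\ord_p\!\big(t(t-1)/(2t-1)\big)$, which can be negative when $t$ is $p$-adically close to $1/2$, so a short case analysis on $\ord_p(t)$, $\ord_p(t-1)$ and $\ord_p(2t-1)$ is needed. The point is that $\ord_p\big(\tfrac{z_1}{k}(tz_1-\ii\mu)^2\big)\ge -\ord_p(k)+2\min(\ord_p(t),\ord_p(\mu))$, and because $\ord_p(k)<0$ while the outer-region hypothesis $\ord_p(k(2t-1)^2)<0$ controls the regime $\ord_p(\mu)<\ord_p(t)$, this lower bound is strictly greater than $2\ord_p(t)=\ord_p(t^2)$ in every case. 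Hence the $-t^2$ term dominates and $\ord_p(v\tr\Omega\bar v)=\ord_p(R_2)+2\ord_p(t)$, of the same parity as $\ord_p(R_2)$. Since $\mu$ is purely imaginary for an elliptic component, $\tfrac{2\ii\mu^2}{R_2^2\,v\tr\Omega\bar v}$ lies in $\Qp$ with order of that same parity, so by \cite[Proposition 4.7]{CrePel-williamson} it belongs to $\DSq(\Qp,1)$ exactly when $\ord_p(R_2)$ is even, giving $c_2=1$ in that case and $c_2=p^2$ otherwise.
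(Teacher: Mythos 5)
Your proposal is correct and takes essentially the same approach as the paper's proof: the same dispatch of $p\equiv 1\bmod 4$ and $p=2$, the same eigenvector $v=(tz_1-\ii\mu,-\ii tz_1-\mu,-t,\ii t)$ and value of $v\tr\Omega\bar v$, the same criterion with $a=b=\ii\mu/R_2$, and the same conclusion $\ord_p(v\tr\Omega\bar v)=\ord_p(R_2)+2\ord_p(t)$. Your two-regime case analysis (according to whether $\ord_p(\mu)\ge\ord_p(t)$ or not) is just an expanded version of the paper's uniform bound, which estimates the first term directly by $2\ord_p(t)-\ord_p((2t-1)^2)-\ord_p(k)>2\ord_p(t)$ using the outer-region hypothesis.
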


\begin{proof}
	If $p\equiv 1\mod 4$, then \[\Qp[\ii]=\Qp\] and $\mu\in\Qp$, and the component is elliptic.
	
	If $p=2$, then we are in the same situation as with the other component: among the values in $X_p$ (recall that $X_p$ was defined in Definition \ref{def:classes}), only $c_2=1$ satisfies that $-c_2$ is the square of an element of $\Qp[\ii]$ outside $\Qp$.
	
	If $p\equiv 3\mod 4$, this condition is satisfied by $c_2=1$ and $c_2=p^2$, so we need the eigenvector criterion. Let $A$ be the $p$-adic matrix in \eqref{eq:omegaM}. Lemma \ref{lemma:eigenvectors} gives us the eigenvector of $A$ corresponding to $\lambda$, which is
	\[(tz_1-\ii\mu,-\ii tz_1-\mu,-t,\ii t).\]
	We have that
	\begin{align*}
		v\tr\Omega \bar{v} & =(tz_1-\ii\mu)R_1z_1(\ii tz_1+\mu)-(-\ii tz_1-\mu)R_1z_1(tz_1-\ii\mu)-\ii R_2t^2-\ii R_2t^2 \\
		& =R_2(2\ii\frac{z_1}{k}(tz_1-\ii\mu)^2-2\ii t^2).
	\end{align*}
	By Lemma \ref{lemma:eigenvalues2}, $\ord_p(\mu)=\ord_p(t(t-1)/(2t-1))$, hence the first term has order greater or equal than $2\ord_p(t)-\ord_p(2t-1)^2-\ord_p(k)>2\ord_p(t)$ (because we are in the outer region), and the order of $v\tr\Omega \bar{v}$ is $\ord_p(R_2)+2\ord_p(t)$, which has the same parity as $\ord_p(R_2)$.
	
	Now we use the formula in \cite[Proposition 4.6]{CrePel-williamson} with $a=b=\ii\mu/R_2$. The result is
	\[\frac{2a\mu/R_2}{v\tr\Omega \bar{v}}=\frac{2\ii\mu^2}{R_2^2v\tr\Omega \bar{v}}.\]
	This is in \[\DSq(\Qp,-\mu^2)=\DSq(\Qp,1),\] when $\ord_p(R_2)$ is even, hence $c_2=1$ is valid in that case, and otherwise we take $c_2=p^2$.
\end{proof}

\section{Normal forms of $F_{t,R_1,R_2}$ at $S$ and $T$ in the inner region}\label{sec:ST-inner}

\begin{figure}
	\includegraphics{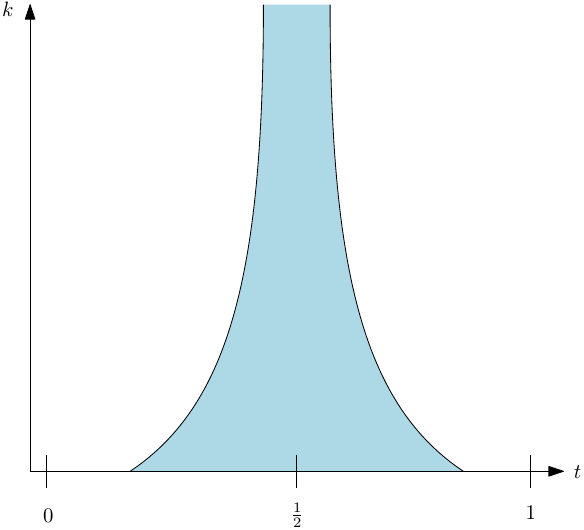}
	\caption{The inner region, in terms of the parameters $k$ and $t$.}
	\label{fig:inner}
\end{figure}

Now we study the normal forms at $S$ and $T$ in the cases where $p\ne 2$ and $\ord_p(k(2t-1)^2)>0$ (see Figure \ref{fig:inner}). We first note that in this case $\ord_p(2t-1)>0$, and this implies $\ord_p(t)=\ord_p(t-1)=0$. This result is the analog of Lemma \ref{lemma:eigenvalues2}.

\begin{lemma}\label{lemma:eigenvalues3}
	\letpprime. Let $\F$ be the parameter set in \eqref{eq:F} and let $(t,R_1,R_2)\in \F$. Let $M_{J_{t,R_1,R_2}}(z_1,z_2)$ and $M_{H_{t,R_1,R_2}}(z_1,z_2)$ be the two matrices in \eqref{eq:MJ} and \eqref{eq:MH} respectively. Let $\Omega_{R_1,R_2}(z_1,z_2)$ be the matrix of $\omega=R_1\omega_1\oplus R_2\omega_2$ as in equation \eqref{eq:omega}.
	Let $k=R_2/R_1$ and $z_1\in\{1,-1\}$. Suppose that $(k,t)$ is in the inner region as in Definition \ref{def:regions}. Then the characteristic polynomial of \eqref{eq:omegaM2}
	has four roots \[\Big\{\lambda,-\lambda,\mu,-\mu\Big\}\subset\Qp[\ii,\sqrt{k}],\] such that
	\begin{equation}\label{eq:lambda3}
		\ord_p(\lambda)=\ord_p(\mu)=\frac{\ord_p(k)}{2},
	\end{equation}
	$\lambda$ has the same leading digit as $\sqrt{kz_1}/2$, and $\mu$ has the opposite. The roots are pairwise distinct if and only if $k(1-2t)+tz_1\ne 0$.
\end{lemma}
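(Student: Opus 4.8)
The plan is to mirror the structure of the proof of Lemma~\ref{lemma:eigenvalues2}, computing the discriminant of the quadratic $P(x)=x^2+\ii(k(1-2t)+tz_1)x+kt(1-t)z_1$ whose roots are $\lambda$ and $\mu$ (this factorization is supplied by Lemma~\ref{lemma:eigenvectors}), but now tracking the orders under the \emph{inner region} hypothesis $\ord_p(k(2t-1)^2)>0$. First I would record the consequences of this hypothesis: since $\ord_p(k)<0$, positivity of $\ord_p(k(2t-1)^2)$ forces $\ord_p(2t-1)$ to be strictly positive, and hence $\ord_p(t)=\ord_p(t-1)=0$, exactly as stated in the paragraph preceding the lemma. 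This is the key qualitative difference from the outer region: there the linear coefficient $k(1-2t)+tz_1$ had order $\ord_p(k(2t-1))<0$ dominated by the $k$-term, whereas here the $k(1-2t)$ term has order $\ord_p(k)+\ord_p(2t-1)$, which I must compare against the order $0$ of the $tz_1$ term.

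Next I would compute $\ord_p$ of the discriminant
\[\Delta=-(k(1-2t)+tz_1)^2-4kt(1-t)z_1.\]
The second summand $4kt(1-t)z_1$ has order exactly $\ord_p(k)$ since $t,t-1,z_1$ are all units. The first summand has order $2\bigl(\ord_p(k)+\ord_p(2t-1)\bigr)$ when that quantity is below $0$, but in any case its comparison with $\ord_p(k)$ must be carried out carefully. The upshot I expect is $\ord_p(\Delta)=\ord_p(k)$, governed by the $-4kt(1-t)z_1$ term: indeed $2\ord_p(k(1-2t))=2\ord_p(k)+2\ord_p(2t-1)$, and since $\ord_p(k(2t-1)^2)=\ord_p(k)+2\ord_p(2t-1)>0>\ord_p(k)$, the square term strictly dominates (has strictly larger order), so it cannot interfere with the order of the smaller $\ord_p(k)$ term. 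Thus $\Delta$ admits a square root in an extension of order $\ord_p(k)/2$; since $\ord_p(k)$ may be odd and $-4t(1-t)z_1$ need not be a square, this square root lives in $\Qp[\ii,\sqrt{k}]$, not merely $\Qp[\ii]$, which explains the splitting field in the statement. Using $\lambda\mu=kt(1-t)z_1$ (constant term) together with $\lambda+\mu=-\ii(k(1-2t)+tz_1)$, and the fact that $\ord_p(\lambda+\mu)\ge\min(\ord_p(k(1-2t)),0)$, I would deduce that neither root can have order different from $\ord_p(k)/2$: if the orders of $\lambda,\mu$ differed, the product would force one to have order $<\ord_p(k)/2$ and the other $>\ord_p(k)/2$, but then the leading behavior is controlled by $\sqrt{\Delta}$, giving $\ord_p(\lambda)=\ord_p(\mu)=\ord_p(k)/2$ as in \eqref{eq:lambda3}.

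For the leading-digit claim, I would write $\lambda,\mu=\tfrac12\bigl(-\ii(k(1-2t)+tz_1)\pm\sqrt{\Delta}\bigr)$ and observe that $\sqrt{\Delta}$ has leading term $\sqrt{-4kt(1-t)z_1}=2\sqrt{k}\sqrt{-t(1-t)z_1}$ up to higher-order corrections, while the linear term $-\ii(k(1-2t)+tz_1)/2$ has order $>\ord_p(k)/2$ (its minimal piece $-\ii k(1-2t)/2$ has order $\ord_p(k)+\ord_p(2t-1)>\ord_p(k)/2$ precisely in the inner region). Hence the dominant contribution to each root is $\pm\tfrac12\sqrt{\Delta}\approx\pm\sqrt{k}\sqrt{-t(1-t)z_1}$; normalizing $-t(1-t)\equiv$ a unit square appropriately, this pins the leading digit of $\lambda$ to $\sqrt{kz_1}/2$ and of $\mu$ to its negative. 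The main obstacle I anticipate is this last normalization: matching the leading digit to the specific expression $\sqrt{kz_1}/2$ requires keeping track of which square root of $\Delta$ is chosen and reconciling the factor $\sqrt{-t(1-t)z_1}$ against $\sqrt{z_1}$, so I would need to verify that $-t(1-t)$ reduces to the correct residue (using $\ord_p(t)=\ord_p(t-1)=0$) rather than merely giving an order estimate. Finally, the distinctness criterion is immediate: $\lambda=\mu$ or $\lambda=-\mu$ can be excluded by the order computation except when $\Delta=0$ or the roots coincide through the linear term; a direct check shows the four roots $\lambda,-\lambda,\mu,-\mu$ are pairwise distinct exactly when $\Delta\ne0$ and the linear coefficient is nonzero, and unwinding these conditions yields the stated equivalence with $k(1-2t)+tz_1\ne0$.
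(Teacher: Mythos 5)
Your proposal follows the paper's argument step for step: reduce to the quadratic supplied by Lemma \ref{lemma:eigenvectors}, show $\ord_p(\Delta)=\ord_p(k)$ because the square of the linear coefficient has strictly larger order in the inner region, conclude that both roots have order $\ord_p(k)/2$ with leading digits equal to those of $\pm\sqrt{\Delta}/2$, identify the splitting field $\Qp[\ii,\sqrt{k}]$, and reduce pairwise distinctness to the non-vanishing of the linear coefficient (since $\Delta\neq 0$ and $\lambda\mu\neq 0$ automatically). All of those steps are correct and are essentially the paper's proof.

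The genuine gap is exactly the step you defer (``I would need to verify that $-t(1-t)$ reduces to the correct residue''), and it cannot be verified with the quadratic as you wrote it: you copied the constant term $kt(1-t)z_1$ from the proof of Lemma \ref{lemma:eigenvalues2}, whereas Lemma \ref{lemma:eigenvectors} evaluated at $z_2=-1$ gives $kt(t-1)z_1$. That sign discrepancy is an inconsequential slip in the outer region, where only orders are asserted, but here the leading digit is the whole point. In the inner region $p\neq 2$ and $t$ agrees with $1/2$ to positive order, and one has the identity
\[-4t(t-1)=1-(2t-1)^2,\]
so with the correct constant term the dominant part of the discriminant is
\[-4kt(t-1)z_1=kz_1\bigl(1-(2t-1)^2\bigr),\]
that is, $kz_1$ times a unit congruent to $1$ modulo $p$; since the remaining term $-(k(1-2t)+tz_1)^2$ has order strictly greater than $\ord_p(k)$, Hensel's lemma ($p$ odd) gives $\Delta=kz_1\cdot(\text{square in }\Qp)$, which is what pins the leading digits of the roots to $\pm\sqrt{kz_1}/2$. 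With your sign the dominant part is instead $-4kt(1-t)z_1\approx -kz_1$, and carrying out the residue computation would yield leading digits $\pm\sqrt{-kz_1}/2=\pm\ii\sqrt{kz_1}/2$, which is \emph{not} the assertion of the lemma; worse, for $p\equiv 3\bmod 4$ the elements $kz_1$ and $-kz_1$ lie in different square classes, so the error would propagate into Lemma \ref{lemma:eigenvectors3} and exchange, for instance, the focus-focus case ($kz_1$ a square) with the class (1) cases. To complete the proof, correct the constant term to $kt(t-1)z_1$; the postponed verification then becomes the identity displayed above, and the leading-digit claim follows.
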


\begin{proof}
	By Lemma \ref{lemma:eigenvectors}, $\lambda$ and $\mu$ are the roots of
	\[P(x)=x^2+\ii(k(1-2t)+tz_1)x+kt(t-1)z_1.\]
	The discriminant of this equation is
	\[\Delta=-(k(1-2t)+tz_1)^2-4kt(t-1)z_1.\]
	The order of $k(1-2t)+tz_1$ is at least $\min\{\ord_p(k(2t-1)),0\}$. Hence, the order of $(k(1-2t)+tz_1)^2$ is at least $\min\{\ord_p(k^2(2t-1)^2),0\}$. The order of $kt(t-1)$ is that of $k$, which is smaller, because we are in the inner region.
	Hence, $\ord_p(\Delta)=\ord_p(k)$. It has a square root in $\Qp[\ii,\sqrt{kt(t-1)}]$ of order $\ord_p(k)/2$. This extension is the same as $\Qp[\ii,\sqrt{k}]$, because $t$ has the same leading digit than $1/2$ and $\sqrt{t(t-1)}$ has that of $\ii/2$. No root of the discriminant cancels with $-\ii(k(1-2t)+tz_1)$, which has higher order, hence both roots of $P(x)$ have order $\ord(k)/2$.
	
	Respecting to their leading digits, they are the same as those of $\sqrt{\Delta}/2$, that is, those of $\sqrt{-4kt(t-1)z_1}/2$, which in turn are the same as those of $\sqrt{kz_1}/2$.
	
	Since $\Delta\ne 0$ and $\lambda\ne 0$, the only case in which there may be repeated roots is when $\lambda=-\mu$. By the expression of $P(x)$, this implies $k(1-2t)+tz_1=0$.
\end{proof}

\begin{lemma}\label{lemma:eigenvectors3}
	\letpprime. Let $\F$ be the parameter set in \eqref{eq:F} and let $(t,R_1,R_2)\in \F$. Let $F_{t,R_1,R_2}=(J_{t,R_1,R_2},H_{t,R_1,R_2}):\sphere\times\sphere\to\Qp^2$ be the $p$-adic coupled angular momentum system given by expression \eqref{eq:angular}. Let $k\in\Qp$ and $t\in \Zp$ such that $(k,t)$ is in the inner region as in Definition \ref{def:regions}. Then the local normal form of $F_{t,R_1,R_2}$ at $S$ and $T$ is as follows:
	\begin{enumerate}
		\item If $p\equiv 1\mod 4$:
		\begin{enumerate}
			\item If $k$ is a square in $\Qp$, the normal form is $(1,1)$ of class (1) (i.e. elliptic-elliptic, or equivalently hyperbolic-hyperbolic).
			\item If $k$ has even order but is not a square, the normal form is $c=c_0$ of class (2).
			\item If $k$ is $p$ times a square, the normal form is $c=p$ of class (2).
			\item If $k$ has odd order but it is not $p$ times a square, the normal form is $c=c_0p$ of class (2).
		\end{enumerate}
		\item If $p\equiv 3\mod 4$:
		\begin{enumerate}
			\item If $kz_1$ is a square in $\Qp$, the normal form is $c=-1$ of class (2) (i.e. focus-focus).
			\item If $kz_1$ has even order but is not a square, the normal form is $(1,1)$ of class (1) if $\ord_p(R_2)$ is even and $(p^2,p^2)$ of class (1) otherwise.
			\item If $kz_1$ is $p$ times a square, the normal form is $(-p,-1,0,1,0)$ of class (3).
			\item If $kz_1$ has odd order but it is not $p$ times a square, the normal form is $(p,-1,0,1,0)$ of class (3).
		\end{enumerate}
	\end{enumerate}
\end{lemma}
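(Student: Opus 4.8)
The plan is to read off the Williamson class of $F_{t,R_1,R_2}$ at $S$ (where $z_1=1$) and $T$ (where $z_1=-1$) from the eigenvalue data of Lemma \ref{lemma:eigenvalues3}, and then to resolve the remaining ambiguities by a square-class computation together with the eigenvector criterion of \cite[Propositions 4.6 and 4.7]{CrePel-williamson}. Write $\beta=k(1-2t)+tz_1$ and let $\Delta=-\beta^2-4kt(t-1)z_1$ be the discriminant of $P(x)=x^2+\ii\beta x+kt(t-1)z_1$, so that $\lambda=\tfrac12(-\ii\beta+\sqrt\Delta)$. Squaring gives
\[
\lambda^2=\frac{-\beta^2-2kt(t-1)z_1}{2}-\frac{\beta}{2}\sqrt{-\Delta},
\]
using $\ii\sqrt\Delta=\sqrt{-\Delta}$. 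Since the point is non-degenerate we have $\beta\ne0$ by Lemma \ref{lemma:eigenvalues3}, so $\lambda^2\in\Qp$ if and only if $-\Delta$ is a square. By Lemma \ref{lemma:eigenvalues3} we have $\ord_p(\Delta)=\ord_p(k)$, and since $-4t(t-1)=1-(2t-1)^2$ is a principal unit (hence a square for $p$ odd), the square class of $\Delta$ equals that of $kz_1$ and that of $-\Delta$ equals that of $-kz_1$. This elementary computation is what everything rests on.

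Next I would organise the cases by the eigenvalue field $\Qp(\lambda)=\Qp(\ii,\sqrt\Delta)$ of the matrix $A=R_2\,\Omega_{R_1,R_2}(z_1,-1)^{-1}M_{H_{t,R_1,R_2}}(z_1,-1)$ from Lemma \ref{lemma:eigenvectors}, using two invariants: whether $\lambda^2\in\Qp$, and the degree $[\Qp(\lambda):\Qp]$. If $-kz_1$ is a square, then $\lambda^2,\mu^2\in\Qp$ (and they are distinct, by non-degeneracy), the four eigenvalues split as $\{\pm\lambda\}$ and $\{\pm\mu\}$ spanning the $\Qp$-rational symplectic planes on which $A^2$ acts as the scalars $\lambda^2$ and $\mu^2$, and \cite[Theorem A]{CrePel-williamson} places the point in class (1). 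Otherwise $\lambda^2\notin\Qp$, the point is of class (2) or (3), and the two are told apart by the degree of $\Qp(\lambda)$: a quadratic field gives focus--focus (class (2)), a degree-$4$ biquadratic field gives class (3). For $p\equiv1\bmod4$ one has $\ii\in\Qp$, so $\Qp(\lambda)=\Qp(\sqrt\Delta)$ is at most quadratic and class (3) cannot occur; for $p\equiv3\bmod4$, $\Qp(\ii)$ is the unramified quadratic extension, in which $\sqrt\Delta$ lies exactly when $\ord_p(kz_1)$ is even, so $[\Qp(\lambda):\Qp]=4$ precisely when $kz_1$ has odd order.

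Feeding in the square class of $kz_1$ now yields the stated list. For $p\equiv1\bmod4$ the factor $z_1$ is a square, so the conditions are conditions on $k$: if $k$ is a square then $\lambda,\mu\in\Qp$ and class (1) forces $c_1=c_2=1$ (the only unit square in $X_p$), giving $(1,1)$; if $k$ is not a square we get class (2) with $c$ the representative in $Y_p=\{c_0,p,c_0p\}$ of the square class of $k$, which is cases (b)--(d). For $p\equiv3\bmod4$: if $kz_1$ is a square then $-kz_1$ lies in the class of $-1$, so $\lambda\in\Qp(\ii)$, the four eigenvalues form a single Galois orbit, and we get focus--focus with $c=-1$; if $kz_1$ has even order but is a non-square then $-kz_1$ is a square, $\lambda^2\in\Qp$, and we are in class (1) with $c_1,c_2\in\{1,p^2\}$; and if $kz_1$ has odd order the field has degree $4$ and the point is of class (3).

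It remains to fix the fine parameters, and this is where I expect the real work to lie. In the class (1) subcase for $p\equiv3\bmod4$ I would rerun the computation of $v\tr\Omega\bar v$ from Lemmas \ref{lemma:eigenvectors-lambda2} and \ref{lemma:eigenvectors-mu2}, now with $\ord_p(\lambda)=\ord_p(k)/2$, and apply \cite[Propositions 4.6 and 4.7]{CrePel-williamson}: the parity of $\ord_p(R_2)$ selects $(1,1)$ versus $(p^2,p^2)$, while the symmetric roles of $\lambda$ and $\mu$ force both components to take the same value. In the class (3) subcases I would substitute the eigenvector $(tz_1-\ii\lambda,-\ii tz_1-\lambda,-t,\ii t)$ of Lemma \ref{lemma:eigenvectors} into the class (3) construction of \cite[Theorem A and Table 1]{CrePel-williamson} and match the resulting quadratic-form invariants to extract the tuple $(c,t_1,t_2,a,b)=(c,-1,0,1,0)$, with $c\in\{p,-p\}$ the square-class representative of $-kz_1$. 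I expect this last identification to be the main obstacle: certifying that we are in class (3) is immediate once the field has degree $4$, but recovering the precise tuple --- in particular verifying that the auxiliary parameters collapse to $t_1=-1$, $t_2=0$, $a=1$ and $b=0$ --- requires pushing the symplectic normalisation through the ramified degree-$4$ extension $\Qp(\ii,\sqrt\Delta)$ and comparing invariants against the table.
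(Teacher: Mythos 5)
Your reduction of the coarse classification to the square class of $\Delta$ (equivalently of $kz_1$, since $-4t(t-1)=1-(2t-1)^2$ is a principal unit, hence a square, in the inner region) is sound and matches the paper's use of Lemma \ref{lemma:eigenvalues3}. But your proof has a genuine gap at the very first step: you discard the case $\beta=k(1-2t)+tz_1=0$ by claiming that non-degeneracy forces $\beta\ne 0$ ``by Lemma \ref{lemma:eigenvalues3}''. This is a non-sequitur, twice over. First, the lemma being proved has no non-degeneracy hypothesis: it asserts the normal form for \emph{every} $(k,t)$ in the inner region, and $\beta=0$ does occur there (take $k=tz_1/(2t-1)$, which has order $-\ord_p(2t-1)<0$ and satisfies $\ord_p(k(2t-1)^2)=\ord_p(2t-1)>0$). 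Second, Lemma \ref{lemma:eigenvalues3} only says that the roots of the characteristic polynomial of the \emph{particular} combination $R_2\Omega^{-1}M_{H}$ are repeated when $\beta=0$; non-degeneracy of the critical point concerns a \emph{generic} combination of $\dd^2J$ and $\dd^2H$, so a repeated spectrum for this one matrix does not make the point degenerate. Indeed the paper shows these points are still non-degenerate and still satisfy the stated classification: when $\beta=0$ it switches to the combination $R_2\Omega^{-1}\dd^2(H-tz_1J)$, whose eigenvalues $\pm\ii t\pm t\sqrt{kz_1-1}$ are distinct, and reruns the eigenvector criterion there (computing $v\tr\Omega\bar{v}=-2\ii R_2(\lambda^2/k+t^2z_1)$, again of order $\ord_p(R_2)$). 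Your argument covers none of this, so the lemma remains unproved on a nonempty part of its hypotheses.

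Second, the fine classification is deferred rather than proved, and it is precisely where the inner region differs from the outer one. In case 2b you must show $\ord_p(v\tr\Omega\bar{v})=\ord_p(R_2)$, and you cannot simply ``rerun'' Lemmas \ref{lemma:eigenvectors-lambda2} and \ref{lemma:eigenvectors-mu2}: there, one of the two terms of $v\tr\Omega\bar{v}=2\ii R_1z_1(tz_1-\ii\lambda)^2-2\ii R_2t^2$ dominates, whereas in the inner region both terms have the \emph{same} order, and one needs the leading-digit information of Lemma \ref{lemma:eigenvalues3} ($\lambda$ has the leading digit of $\sqrt{kz_1}/2$, and $t$ that of $1/2$) to see that both terms reduce to the leading digit of $-\ii R_2/2$ and therefore add rather than cancel. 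This same computation is what shows that replacing $\lambda$ by $\mu$ (whose leading digit is opposite, a difference killed by squaring) changes nothing, which is how the mixed form $(1,p^2)$ is excluded --- a point you assert but do not justify. Likewise, in cases 2c and 2d you correctly predict that pinning down $b=0$ is the main obstacle, but you leave it as an obstacle; the paper resolves it not by pushing the symplectic normalisation through the degree-$4$ extension, but via \cite[Proposition 5.2]{CrePel-williamson} together with \cite[Proposition 6.3 and Table 7]{CrePel-williamson}, which reduce $b=0$ to the statement that $v\tr\Omega\bar{v}$ has integer order --- again a consequence of the same no-cancellation computation. As written, your text is a correct strategy for the generic case plus an accurate diagnosis of where the work lies, but the case $\beta=0$ and the decisive order computations are missing.
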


\begin{proof}
	Let $M_{J_{t,R_1,R_2}}(z_1,z_2)$ and $M_{H_{t,R_1,R_2}}(z_1,z_2)$ be the two matrices in \eqref{eq:MJ} and \eqref{eq:MH} respectively. Let $\Omega_{R_1,R_2}(z_1,z_2)$ be the matrix of $\omega=R_1\omega_1\oplus R_2\omega_2$ as in equation \eqref{eq:omega}.
	
	We suppose first that $k(1-2t)+tz_1\ne 0$. Then we use Lemma \ref{lemma:eigenvalues3} to decide the linear normal form in terms of the eigenvalues of \eqref{eq:omegaM2}, which are pairwise distinct in this case (the constant factor $R_2$ is always in $\Qp$ and does not affect the type of the eigenvalues). By Lemma \ref{lemma:darboux}, this is also a local normal form.
	\begin{enumerate}
		\item If $p\equiv 1\mod 4$:
		\begin{enumerate}
			\item If $k$ is a square in $\Qp$, the eigenvalues are in $\Qp$ and there is only one possible normal form: $(1,1)$ of class (1).
			\item If $k$ has even order but is not a square, the eigenvalues are of the form $\pm r\pm \sqrt{c_0}s$ for $r,s\in\Qp$, which leaves us with only one possible form: $c=c_0$ of class (2).
			\item If $k$ is $p$ times a square, the eigenvalues are of the form $\pm r\pm \sqrt{p}s$ for $r,s\in\Qp$, which leaves us with only one possible form: $c=p$ of class (2).
			\item If $k$ has odd order but it is not $p$ times a square, the eigenvalues are of the form $\pm r\pm \sqrt{c_0p}s$ for $r,s\in\Qp$, which leaves us with only one possible form: $c=c_0p$ of class (2).
		\end{enumerate}
		\item If $p\equiv 3\mod 4$:
		\begin{enumerate}
			\item If $kz_1$ is a square in $\Qp$, the eigenvalues are of the form $\pm r\pm\ii s$ for $r,s\in\Qp$, and the normal form is $c=-1$ of class (2).
			\item If $kz_1$ has even order but is not a square, the eigenvalues are of the form $\pm \ii r,\pm\ii s$ for $r,s\in\Qp$, which leaves us with three possible normal forms: $(1,1)$, $(1,p^2)$ and $(p^2,p^2)$, all of class (1).
			\item If $kz_1$ is $p$ times a square, the eigenvalues are of the form $\pm \ii r\pm\sqrt{p} s$ for $r,s\in\Qp$, which leaves us with two possible forms: $(-p,-1,0,1,0)$ and $(-p,-1,0,1,1)$ of class (3).
			\item If $kz_1$ has odd order but it is not $p$ times a square, the eigenvalues are of the form $\pm \ii r\pm\ii\sqrt{p} s$ for $r,s\in\Qp$, which leaves us with two possible forms: $(p,-1,0,1,0)$ and $(p,-1,0,1,1)$ of class (3).
		\end{enumerate}
	\end{enumerate}
	
	The next step is to use the eigenvector criterion to finish the classification in the cases where there is more than one possible form, concretely 2b, 2c and 2d.
	
	Lemma \ref{lemma:eigenvectors} gives us the expression of the eigenvectors: the one corresponding to $\lambda$ is $v=(tz_1-\ii\lambda,-\ii tz_1-\lambda,tz_2,-\ii tz_2)$ and the one for $\mu$ is the same but with $\mu$ instead of $\lambda$, and
	\[v\tr\Omega\bar{v}=2\ii R_1z_1(tz_1-\ii\lambda)^2-2\ii R_2t^2,\]
	where $\bar{v}$ is the eigenvector for $-\lambda$, also from Lemma \ref{lemma:eigenvectors}.
	
	In case 2b, we need the criterion of \cite[Proposition 4.6]{CrePel-williamson}, with $a=b=\ii\lambda/R_2$: we need to check whether
	\[\frac{2a\lambda/R_2}{v\tr\Omega\bar{v}}=\frac{2\ii\lambda^2}{R_2^2v\tr\Omega\bar{v}}\]
	is in $\DSq(\Qp,1)$, which reduces to determining whether its order is odd or even. Since $\lambda$ has integer order in this case, this in turn reduces to whether the order of $v\tr\Omega\bar{v}$ is odd or even. To this end, we use that, by Lemma \ref{lemma:eigenvalues3}, the order and leading digit of $\lambda$ are those of $\sqrt{kz_1}/2$, so those of $tz_1-\ii\lambda$ must be those of $-\ii\sqrt{kz_1}/2$. After squaring and multiplying by $2\ii R_1z_1$, it becomes $-\ii kR_1/2=-\ii R_2/2$. The same happens exactly with $-2\ii R_2t^2$, because $t$ is near $1/2$. This means that the two terms in $v\tr\Omega\bar{v}$ do not cancel, and $\ord_p(v\tr\Omega\bar{v})=\ord_p(R_2),$ so $c_1=1$ if this is even and $c_1=R^2$ if it is odd. Using $\mu$ instead of $\lambda$ does not change anything, and the result for $c_2$ is the same.
	
	In case 2c, we need to classify a point which is in one of two forms of class (3), so we need the criterion of \cite[Proposition 5.2]{CrePel-williamson}. We have $\alpha=\sqrt{c}=\ii\sqrt{p}$, $\gamma=\sqrt{t_1+t_2\alpha}=\ii$ and $a=1$. Trying $b=0$ (if it does not work, we know that it is $b=1$), we need to check whether
	\[d=\frac{a\alpha\gamma(b+\alpha)}{v\tr\Omega\bar{v}}=\frac{-\ii p}{v\tr\Omega\bar{v}}\]
	is in $\DSq(\Qp[\ii\sqrt{p}],1)$. By \cite[Proposition 6.3 and Table 7]{CrePel-williamson}, if we write $d$ in the form $r'+s'\alpha$, this is equivalent to having $\ord_p(r')\le\ord_p(s')$, which in turn is equivalent to saying that $d$ has integer order, and this to saying that $v\tr\Omega\bar{v}$ has integer order. The two terms in $v\tr\Omega\bar{v}$ have order $\ord_p(R_2)$ and, by the same argument as in the previous paragraph, they do not cancel. Hence $b=0$ works.
	
	Finally, in case 2d, we use the same criterion as in the previous case and we have $\alpha=\sqrt{p}$, $\gamma=\ii$, $a=1$. Trying $b=0$, we need to check whether
	\[d=\frac{a\alpha\gamma(b+\alpha)}{v\tr\Omega\bar{v}}=\frac{\ii p}{v\tr\Omega\bar{v}}\]
	is in $\DSq(\Qp[\sqrt{p}],1)$. Again by \cite[Proposition 6.3 and Table 7]{CrePel-williamson}, this is equivalent to having integer order, which also holds in this case, and $b=0$ works again.
	
	It is left to solve the case when $k(1-2t)+tz_1=0$. In this case the matrix in \eqref{eq:omegaM2} has repeated eigenvalues, and we cannot use it to determine the Williamson type of the critical point. Instead, we must use a different linear combination of the Hessians of $J$ and $H$.
	
	We take
	\[A=R_2\Omega_{R_1,R_2}\dd^2(H_{t,R_1,R_2}(z_1,-1)-tz_1J_{t,R_1,R_2}(z_1,-1))=\begin{pmatrix}
		0 & -2tz_1 & 0 & -ktz_1 \\
		2tz_1 & 0 & ktz_1 & 0 \\
		0 & t & 0 & 0 \\
		-t & 0 & 0 & 0
	\end{pmatrix}.\]
	Its eigenvalues are
	\[\lambda,-\lambda,\mu,-\mu=\pm\ii t\pm t\sqrt{kz_1-1}.\]
	Since $k$ has negative order, the conclusions from these eigenvalues are the same than in the case $k(1-2t)+tz_1\ne 0$. We need, again, to apply an eigenvector criterion in the cases 2b, 2c and 2d.
	
	The eigenvector is
	\[v=(\lambda,-\ii z_1\lambda,-\ii tz_1,-t)\]
	and
	\[v\tr\Omega\bar{v}=2\lambda R_1z_1(-\ii z_1\lambda)-2\ii R_2 t^2z_1=-2\ii R_2\left(\frac{\lambda^2}{k}+t^2z_1\right).\]
	The first term has the same order and leading digit as $t^2kz_1/k=t^2z_1$, hence there is no cancellation and this has order $0$. As discussed in the cases when $k(1-2t)+tz_1\ne 0$, in the three cases 2b, 2c and 2d, the normal form is determined by the order of $v\tr\Omega\bar{v}$, hence we arrive at the same conclusions.
\end{proof}

\section{Normal forms of $F_{t,R_1,R_2}$ at $S$ and $T$ in the limit region}\label{sec:ST-limit}

\begin{figure}
	\includegraphics{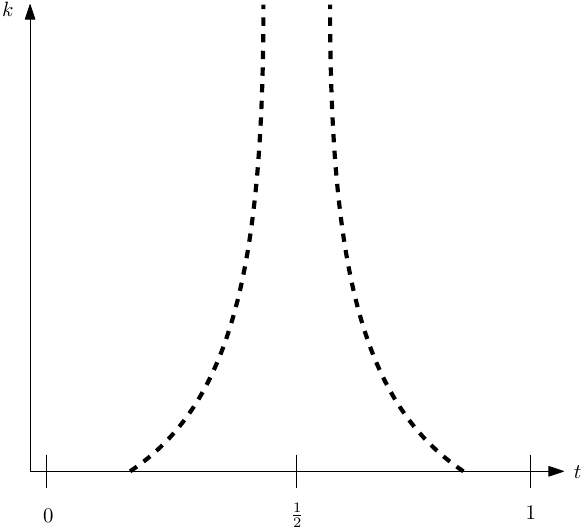}
	\caption{The dotted lines represent the limit region.}
	\label{fig:limit}
\end{figure}

In this section we study the cases in the limit region, when $\ord_p(k(2t-1)^2)=0$ (see Figure \ref{fig:limit}). The results in this case are similar to those in the inner region, but with different formulas. In this case $p\ne 2$, $\ord_p(k)=-2\ord_p(2t-1)$ is even, $\ord_p(2t-1)>0$, $\ord_p(t)=\ord_p(t-1)=0$ and $\ord_p(k(2t-1))<0$. We also have that $\ord_p(R_1R_2)=\ord_p(kR_1^2)$ is even.

For $k,t\in\Qp$ and $z_1\in\{1,-1\}$, we define
\[\Delta(k,t,z_1)=-k^2(1-2t)^2-t^2+2ktz_1.\]

\begin{lemma}\label{lemma:eigenvalues4}
	\letpprime. Let $\F$ be the parameter set in \eqref{eq:F} and let $(t,R_1,R_2)\in \F$. Let $M_{J_{t,R_1,R_2}}(z_1,z_2)$ and $M_{H_{t,R_1,R_2}}(z_1,z_2)$ be the two matrices in \eqref{eq:MJ} and \eqref{eq:MH} respectively. Let $\Omega_{R_1,R_2}(z_1,z_2)$ be the matrix of $\omega=R_1\omega_1\oplus R_2\omega_2$ as in equation \eqref{eq:omega}.
	Let $k=R_2/R_1$ and $z_1\in\{1,-1\}$. Suppose that $(k,t)$ is in the limit region as in Definition \ref{def:regions}. Then the characteristic polynomial of \eqref{eq:omegaM2}
	has four roots \[\Big\{\lambda,-\lambda,\mu,-\mu\Big\}\subset\Qp[\ii,\sqrt{k}],\] such that
	\begin{equation}\label{eq:lambda4}
		\ord_p(\lambda)=\ord_p(\mu)=\frac{\ord_p(k)}{2}.
	\end{equation}
	The roots are pairwise distinct if and only if $\Delta(k,t,z_1)=0$.
\end{lemma}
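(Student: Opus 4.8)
The plan is to mirror the proof of Lemma \ref{lemma:eigenvalues3}, extracting the relevant quadratic from Lemma \ref{lemma:eigenvectors} and analysing it, but with extra care because in the limit region two of the terms governing the eigenvalues share the same order and may partially cancel. First I would record the quadratic: by Lemma \ref{lemma:eigenvectors} with $z_2=-1$, the eigenvalues $\lambda,\mu$ are the roots of
\[
P(x)=x^2+\ii\bigl(k(1-2t)+tz_1\bigr)x+kt(t-1)z_1,
\]
and expanding its discriminant $-(k(1-2t)+tz_1)^2-4kt(t-1)z_1$ yields exactly $\Delta(k,t,z_1)=-k^2(1-2t)^2-t^2+2ktz_1$. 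Hence $\Delta=(\lambda-\mu)^2$, the four roots of \eqref{eq:omegaM2} are $\pm\lambda,\pm\mu$, and they are pairwise distinct precisely when $\lambda\neq\pm\mu$, i.e. when $\Delta\neq 0$ and $k(1-2t)+tz_1\neq 0$. I would then observe that in the limit region the dominant summand of $k(1-2t)+tz_1$ is $k(1-2t)$, of order $\ord_p(k)/2<0$, while $tz_1$ has order $0$, so $k(1-2t)+tz_1\neq 0$ is automatic; distinctness of the four roots is therefore controlled solely by whether $\Delta$ vanishes.

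The cleanest route to the orders is through Vieta's formulas rather than through $\Delta$, precisely to sidestep the cancellation. Since $\lambda\mu=kt(t-1)z_1$ and $\ord_p(t)=\ord_p(t-1)=0$, one gets $\ord_p(\lambda)+\ord_p(\mu)=\ord_p(k)$; and since $\lambda+\mu=-\ii\bigl(k(1-2t)+tz_1\bigr)$ has order $\ord_p(k(1-2t))=\ord_p(k)+\ord_p(2t-1)=\ord_p(k)/2$ (using $\ord_p(k)=-2\ord_p(2t-1)$), an ultrametric argument forces the two orders to coincide: if they differed, the order of the sum would equal their minimum, strictly below the average $\ord_p(k)/2$, contradicting the computation just made. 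This gives $\ord_p(\lambda)=\ord_p(\mu)=\ord_p(k)/2$, which is \eqref{eq:lambda4}. This step is short and robust because it never needs to resolve the actual size of $\Delta$.

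For the containment $\{\lambda,\mu\}\subset\Qp[\ii,\sqrt{k}]$ I would exploit that $-t(t-1)=t(1-t)$ has residue $1/4$ (as $t\equiv 1/2\bmod p$), hence is a square in $\Qp$ by Hensel; write $\tau=\sqrt{-t(t-1)}\in\Qp$. Substituting $x=\sqrt{kz_1}\,u$ in $P$ and dividing by $kz_1$ turns $P$ into $u^2+cu-\tau^2$ with $c=\ii\bigl(k(1-2t)+tz_1\bigr)/\sqrt{kz_1}$; since $\sqrt{kz_1}\in\Qp[\ii,\sqrt{k}]$ has order $\ord_p(k)/2$, the coefficient $c$ is a unit of $\Qp[\ii,\sqrt{k}]$, and $\Delta=kz_1\,(c^2+4\tau^2)$. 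Thus $\lambda,\mu\in\Qp[\ii,\sqrt{k}]$ as soon as $c^2+4\tau^2$ is a square there, and reducing modulo $p$ this amounts to checking that the residue $1-\overline{k(1-2t)^2z_1}$ is a square in the residue field. This is the main obstacle and the genuinely new feature compared with the inner region: the two order-$\tfrac{1}{2}\ord_p(k)$ contributions to $\Delta$ can partially cancel, so one cannot read off $\sqrt{\Delta}$ from a single dominant term as in Lemma \ref{lemma:eigenvalues3}.

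I expect to dispose of this obstacle by recalling that, for $p\neq 2$ with $\ord_p(k)$ even, $\Qp[\ii,\sqrt{k}]$ is unramified over $\Qp$ of degree at most $2$, with residue field $\mathbb{F}_p$ or $\mathbb{F}_{p^2}$; in the nontrivial case every element of $\mathbb{F}_p^\times$ is a square in $\mathbb{F}_{p^2}$, so the unit $c^2+4\tau^2$ is automatically a square and the roots land in $\Qp[\ii,\sqrt{k}]$. The trivial case $\Qp[\ii,\sqrt{k}]=\Qp$ reduces to a direct square check over $\Qp$, and the residual subcase $1-\overline{k(1-2t)^2z_1}=0$ (where the leading digits of $\lambda$ and $\mu$ coincide and $\ord_p(\Delta)>\ord_p(k)$) would be handled by computing the next-order term of $\Delta$; in all cases the Vieta argument already guarantees $\ord_p(\lambda)=\ord_p(\mu)=\ord_p(k)/2$, so only the field membership, and not the orders, is at stake in this final bookkeeping.
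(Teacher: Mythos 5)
Your treatment of the orders and of the distinctness criterion is correct and is essentially the paper's own argument: the paper likewise extracts $P(x)=x^2+\ii(k(1-2t)+tz_1)x+kt(t-1)z_1$ from Lemma \ref{lemma:eigenvectors}, identifies its discriminant with $\Delta(k,t,z_1)$, and excludes $\lambda=-\mu$ because $k(1-2t)+tz_1$ has negative order. The only difference is minor: where you combine Vieta's sum with the ultrametric inequality, the paper bounds $\ord_p(\Delta(k,t,z_1))\ge\ord_p(k)$, concludes that both roots have order at least $\ord_p(k)/2$, and then uses the product $\lambda\mu=kt(t-1)z_1$ exactly as you do. Both routes are valid, and yours has the small advantage of never needing to discuss the order of $\Delta(k,t,z_1)$.

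The genuine gap is in your last two paragraphs, on the containment of the roots in $\Qp[\ii,\sqrt{k}]$. Your argument rests on the assertion that $c^2+4\tau^2=\Delta(k,t,z_1)/(kz_1)$ is a \emph{unit}, but its order equals $\ord_p(\Delta(k,t,z_1))-\ord_p(k)$, which in the limit region can be strictly positive: this cancellation is precisely what distinguishes the limit region from the inner one, and it cannot be dismissed as a ``residual subcase.'' When it occurs, $\Delta(k,t,z_1)$ can land in any square class; in particular it can have odd order, in which case $\sqrt{\Delta(k,t,z_1)}$ generates a ramified extension that cannot sit inside $\Qp[\ii,\sqrt{k}]$, which is unramified here (recall $p\ne 2$ and $\ord_p(k)$ is even in the limit region). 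Likewise, in your ``trivial case'' $\Qp[\ii,\sqrt{k}]=\Qp$ (e.g.\ $p\equiv 1\bmod 4$ with $k$ a square), the unit $\Delta(k,t,z_1)/(kz_1)$ can perfectly well have non-square residue, and then your ``direct square check'' fails. No computation of next-order terms can repair this, because the containment itself is false in those cases: this is exactly what Lemma \ref{lemma:eigenvectors4} records, whose cases with $\Delta(k,t,z_1)$ a non-square of even order, or of odd order, produce normal forms of class (2) and (3), i.e.\ eigenvalues generating a quadratic extension governed by the square class of $\Delta(k,t,z_1)$, not by that of $k$. The field named in the statement appears to be carried over from Lemma \ref{lemma:eigenvalues3}, where the no-cancellation argument does place the roots in $\Qp[\ii,\sqrt{k}]$; in the limit region the correct ambient field is $\Qp[\ii,\sqrt{\Delta(k,t,z_1)}]$, and, tellingly, the paper's own proof of Lemma \ref{lemma:eigenvalues4} makes no attempt to prove the membership claim --- it establishes only the two facts (the orders and the distinctness criterion) that you proved correctly. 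So you should treat that containment as a defect of the statement rather than as an obstacle your proof must overcome.
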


\begin{proof}
	By Lemma \ref{lemma:eigenvectors}, $\lambda$ and $\mu$ are the roots of
	\[P(x)=x^2+\ii(k(1-2t)+tz_1)x+kt(t-1)z_1.\]
	The discriminant of this equation is precisely $\Delta(k,t,z_1)$. The order of $k(1-2t)+tz_1$ is the same as $k(2t-1)$ and that of $(k(1-2t)+tz_1)^2$ is $\ord_p(k^2(2t-1)^2)=\ord_p(k)$, because we are in the inner region. The order of $kt(t-1)$ is the same. Hence, $\ord_p(\Delta(k,t,z_1))\ge\ord_p(k)$.
	
	Since $\sqrt{\Delta(k,t,z_1)}$ has at least the same order as $\ii(k(1-2t)+tz_1)$, this is at least the order of $\lambda$ and $\mu$. But we also know that $\ord_p(\lambda\mu)=\ord_p(kt(t-1))=\ord_p(k)$, hence the order of $\lambda$ and $\mu$ is exactly $\ord_p(k)/2$.
	
	Two roots may coincide if $\Delta(k,t,z_1)=0$, in which case $\lambda=\mu$, and may also coincide if $\lambda=-\mu$, but this last case implies $\lambda+\mu=k(1-2t)+tz_1=0$, which is not possible because $k(1-2t)$ has negative order.
\end{proof}

\begin{lemma}\label{lemma:eigenvectors4}
	\letpprime. Let $\F$ be the parameter set in \eqref{eq:F} and let $(t,R_1,R_2)\in \F$. Let $F_{t,R_1,R_2}=(J_{t,R_1,R_2},H_{t,R_1,R_2}):\sphere\times\sphere\to\Qp^2$ be the $p$-adic coupled angular momentum system given by expression \eqref{eq:angular}. Let $k=R_2/R_1$ and $z_1\in\{1,-1\}$. Suppose that $(k,t)$ is in the limit region as in Definition \ref{def:regions} and $\Delta(k,t,z_1)\ne 0$. Then the local normal form of $F_{t,R_1,R_2}$ at $S$ and $T$ is as follows:
	\begin{enumerate}
		\item If $p\equiv 1\mod 4$:
		\begin{enumerate}
			\item If $\Delta(k,t,z_1)$ is a square in $\Qp$, the normal form is $(1,1)$ of class (1) (i.e. elliptic-elliptic, or equivalently hyperbolic-hyperbolic).
			\item If $\Delta(k,t,z_1)$ has even order but is not a square, the normal form is $c=c_0$ of class (2).
			\item If $\Delta(k,t,z_1)$ is $p$ times a square, the normal form is $c=p$ of class (2).
			\item If $\Delta(k,t,z_1)$ has odd order but it is not $p$ times a square, the normal form is $c=c_0p$ of class (2).
		\end{enumerate}
		\item If $p\equiv 3\mod 4$:
		\begin{enumerate}
			\item If $\Delta(k,t,z_1)$ is a square in $\Qp$, the normal form is $c=-1$ of class (2) (i.e. focus-focus).
			\item If $\Delta(k,t,z_1)$ has even order but is not a square, the normal form is $(1,1)$ of class (1) if $\ord_p(R_1R_2\Delta(k,t,z_1))$ is a multiple of $4$, and $(p^2,p^2)$ otherwise.
			\item If $\Delta(k,t,z_1)$ is $p$ times a square, the normal form is $(-p,-1,0,1,1)$ of class (3).
			\item If $\Delta(k,t,z_1)$ has odd order but it is not $p$ times a square, the normal form is $(p,-1,0,1,1)$ of class (3).
		\end{enumerate}
	\end{enumerate}
\end{lemma}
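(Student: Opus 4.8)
The plan is to follow the template of Lemma~\ref{lemma:eigenvectors3}, replacing the invariant $kz_1$ that governed the inner region by the discriminant $\Delta(k,t,z_1)$. First I would invoke Lemma~\ref{lemma:eigenvalues4}: since $\Delta(k,t,z_1)\neq 0$, the four eigenvalues $\lambda,-\lambda,\mu,-\mu$ of the matrix in \eqref{eq:omegaM2} are distinct, with $\lambda=\frac{-\ii B+\sqrt\Delta}{2}$ and $\mu=\frac{-\ii B-\sqrt\Delta}{2}$, where $B=k(1-2t)+tz_1$. The field generated over $\Qp$ by these eigenvalues, and hence the Williamson class, is controlled by the square class of $\Delta$ in $\Qp^\times/(\Qp^\times)^2$ together with whether $\ii\in\Qp$. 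A case split on $p\bmod 4$ and on the four square classes of $\Delta$ then produces exactly the list of candidate normal forms: for $p\equiv 1\bmod 4$ one reads off class~(1) or class~(2) directly, and for $p\equiv 3\bmod 4$ one obtains class~(2) with $c=-1$ when $\Delta$ is a square, class~(1) when $\Delta$ has even order and is a non-square, and class~(3) when $\Delta$ is $\pm p$ times a square. By Lemma~\ref{lemma:darboux} each linear normal form is simultaneously a local normal form.

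For the cases with more than one candidate — namely 2b, 2c and 2d — I would invoke the eigenvector criteria of \cite[Proposition 4.6]{CrePel-williamson} and \cite[Proposition 5.2]{CrePel-williamson}, exactly as in Lemma~\ref{lemma:eigenvectors3}. The essential ingredient is the value of $v\tr\Omega\bar v$ for the eigenvector $v=(tz_1-\ii\lambda,-\ii tz_1-\lambda,-t,\ii t)$ of Lemma~\ref{lemma:eigenvectors}. The same manipulation as in Lemma~\ref{lemma:eigenvectors3} gives $v\tr\Omega\bar v=2\ii R_1z_1(tz_1-\ii\lambda)^2-2\ii R_2t^2$, and here — writing $2(tz_1-\ii\lambda)=P-\ii\sqrt\Delta$ with $P=tz_1+k(2t-1)$ and using the identity $z_1(P^2+\Delta)=4kt^2$ — this collapses to the closed form
\[
v\tr\Omega\bar v=R_1z_1\bigl(P\sqrt\Delta-\ii\Delta\bigr)=2R_1z_1\sqrt\Delta\,(tz_1-\ii\lambda).
\]
Since $\ord_p(tz_1-\ii\lambda)=\ord_p(k)/2$ (the term $tz_1$ being negligible against $\ii\lambda$), the valuation of $v\tr\Omega\bar v$ equals $\ord_p(R_1)+\tfrac12\ord_p(\Delta)+\tfrac12\ord_p(k)$.

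This last formula is where the limit region departs from the inner region: there $\Delta$ had order exactly $\ord_p(k)$, whereas now $\ord_p(\Delta)$ can strictly exceed $\ord_p(k)$. In case 2b, feeding the above into \cite[Proposition 4.6]{CrePel-williamson} with $a=b=\ii\lambda/R_2$, membership of $2\ii\lambda^2/(R_2^2\,v\tr\Omega\bar v)$ in $\DSq(\Qp,1)$ reduces by \cite[Proposition 4.7]{CrePel-williamson} to an even-valuation condition; a short parity count shows this is equivalent to $\ord_p(R_1R_2\Delta)\equiv 0\bmod 4$, giving $(1,1)$ in that case and $(p^2,p^2)$ otherwise (the two components agree because repeating the computation with $\mu$ changes nothing). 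In cases 2c and 2d, where $\ord_p(\Delta)$ is odd while $\ord_p(k)$ is even, the valuation of $v\tr\Omega\bar v$ is a half-integer; applying the class~(3) criterion \cite[Proposition 5.2]{CrePel-williamson} with $\alpha=\ii\sqrt p$ (resp.\ $\sqrt p$), $\gamma=\ii$, $a=1$ and testing $b=0$ as in Lemma~\ref{lemma:eigenvectors3}, the half-integral valuation forces the tested quantity out of $\DSq(\Qp[\alpha],1)$, so that $b=0$ fails and the correct parameter is $b=1$ — precisely the point where the limit region differs from the inner region, where $b=0$ was valid.

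The main obstacle I anticipate is the valuation bookkeeping for $v\tr\Omega\bar v$: by the very definition of the limit region there are cancellations among terms of equal order, so one cannot read off $\ord_p(v\tr\Omega\bar v)$ term by term from $2\ii R_1z_1(tz_1-\ii\lambda)^2-2\ii R_2t^2$. Deriving and then trusting the exact closed form $R_1z_1(P\sqrt\Delta-\ii\Delta)$ is what makes the two delicate conclusions — the $\bmod 4$ dichotomy in 2b and the switch to $b=1$ in 2c and 2d — fall out cleanly; verifying the supporting identity $z_1(P^2+\Delta)=4kt^2$ and tracking the parity of $\ord_p(\Delta)$ relative to $\ord_p(k)$ are the steps that will require the most care.
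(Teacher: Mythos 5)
Your proposal is correct and follows essentially the same route as the paper's own proof: the candidate forms are narrowed via the eigenvalues from Lemma \ref{lemma:eigenvalues4}, and the ambiguous cases 2b, 2c, 2d are settled by the closed-form computation $v\tr\Omega\bar v=R_1z_1\sqrt{\Delta(k,t,z_1)}\,(2tz_1-2\ii\lambda)$ (your identity $z_1(P^2+\Delta)=4kt^2$ is exactly the paper's $(tz_1-k(1-2t))^2=4kt^2z_1-\Delta$), yielding $\ord_p(v\tr\Omega\bar v)=\tfrac12\ord_p(R_1R_2\Delta(k,t,z_1))$ and hence the mod-$4$ dichotomy in 2b and the failure of $b=0$ (half-integer valuation) forcing $b=1$ in 2c and 2d. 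This matches the paper's argument step for step, including the anticipated subtlety that the cancellation in $2\ii R_1z_1(tz_1-\ii\lambda)^2-2\ii R_2t^2$ must be resolved exactly rather than term by term.
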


\begin{proof}
	These results are similar to Lemma \ref{lemma:eigenvalues3}, but using $\Delta(k,t,z_1)$ instead of $k$. Let $M_{J_{t,R_1,R_2}}(z_1,z_2)$ and $M_{H_{t,R_1,R_2}}(z_1,z_2)$ be the two matrices in \eqref{eq:MJ} and \eqref{eq:MH} respectively. Let $\Omega_{R_1,R_2}(z_1,z_2)$ be the matrix of $\omega=R_1\omega_1\oplus R_2\omega_2$ as in equation \eqref{eq:omega}. We decide the linear normal forms using the eigenvalues of \eqref{eq:omegaM2}, and by Lemma \ref{lemma:darboux} they must also be local normal forms.
	
	The eigenvalues of \eqref{eq:omegaM2} are of the form $\pm\ii r\pm s\sqrt{\Delta(k,t,z_1)}$. The possible forms, taking into account these eigenvalues, are exactly the same that we see in the proof of Lemma \ref{lemma:eigenvalues3}, in each one of the eight cases. The cases which need further refinement are again 2b, 2c and 2d.
	
	In these cases, we have the eigenvectors from Lemma \ref{lemma:eigenvectors} and we need to calculate $v\tr\Omega\bar{v}$, where $v$ is the eigenvector of $\lambda$:
	\begin{align*}
		v\tr\Omega\bar{v} & =2\ii R_1z_1(tz_1-\ii\lambda)^2-2\ii R_2t^2 \\
		& =2\ii R_1z_1\left(tz_1-\ii\frac{-\ii(k(1-2t)+tz_1)+\sqrt{\Delta(k,t,z_1)}}{2}\right)^2-2\ii R_2t^2 \\
		& =2\ii R_1z_1\left(\frac{2tz_1-k(1-2t)-tz_1-\ii\sqrt{\Delta(k,t,z_1)}}{2}\right)^2-2\ii R_2t^2 \\
		& =2\ii R_1z_1\frac{(tz_1-k(1-2t))^2-2\ii(tz_1-k(1-2t))\sqrt{\Delta(k,t,z_1)}-\Delta(k,t,z_1)}{4}-2\ii R_2t^2.
	\end{align*}
	
	We also have that
	\[(tz_1-k(1-2t))^2=t^2-2kt(1-2t)z_1+k^2(1-2t)^2=4kt^2z_1-\Delta(k,t,z_1),\]
	which implies
	\begin{align*}
		v\tr\Omega\bar{v}
		& =2\ii R_1z_1\frac{4kt^2z_1-2\ii(tz_1-k(1-2t))\sqrt{\Delta(k,t,z_1)}-2\Delta(k,t,z_1)}{4}-2\ii R_2t^2 \\
		& = R_1z_1(2\ii kt^2z_1+(tz_1-k(1-2t))\sqrt{\Delta(k,t,z_1)}-\ii\Delta(k,t,z_1))-2\ii R_2t^2 \\
		& =R_1z_1((tz_1-k(1-2t))\sqrt{\Delta(k,t,z_1)}-\ii\Delta(k,t,z_1)) \\
		& =R_1z_1\sqrt{\Delta(k,t,z_1)}(tz_1-k(1-2t)-\ii\sqrt{\Delta(k,t,z_1)}) \\
		& =R_1z_1\sqrt{\Delta(k,t,z_1)}(2tz_1-2\ii\lambda).
	\end{align*}
	Hence
	\begin{align*}
		\ord_p(v\tr\Omega_0\bar{v}) & =\frac{\ord_p(\Delta(k,t,z_1))}{2}+\ord_p(\lambda)+\ord_p(R_1) \\
		& =\frac{\ord_p(\Delta(k,t,z_1))+\ord_p(k)}{2}+\ord_p(R_1) \\
		& =\frac{\ord_p(\Delta(k,t,z_1))+\ord_p(R_1)+\ord_p(R_2)}{2} \\
		& =\frac{\ord_p(R_1R_2\Delta(k,t,z_1))}{2}.
	\end{align*}
	As in the proof of Lemma \ref{lemma:eigenvalues3}, the tests we need to do in cases 2b, 2c and 2d use this order. In case 2b, the normal form is $(c_1,c_2)$ of class (1), where $c_1$ is $1$ if the order of $v\tr\Omega\bar{v}$ is even and $p^2$ if it is odd, and $c_2$ is the same but with $\mu$ instead of $\lambda$. In this case $\Delta(k,t,z_1)$ has even order, and $R_1R_2$ always has even order, hence this order is integer. It is even if and only if $\ord_p(R_1R_2\Delta(k,t,z_1))$ is a multiple of $4$, which means that in this case the form is $(1,1)$ and in the other case (being a multiple of $4$ plus $2$) it is $(p^2,p^2)$.
	
	In case 2c, the form is $(-p,-1,0,1,0)$ if $\ord_p(v\tr\Omega_0\bar{v})$ is integer and $(-p,-1,0,1,1)$ otherwise. Since $\Delta(k,t,z_1)$ has odd order and $R_1R_2$ has even order, $\ord_p(v\tr\Omega_0\bar{v})$ is not an integer and the form is $(-p,-1,0,1,1)$. Case 2d is the same as 2c but with $p$ instead of $-p$ in the normal forms.
\end{proof}

\begin{lemma}\label{lemma:eigenvectors4-degenerate}
	\letpprime. Let $\F$ be the parameter set in \eqref{eq:F} and let $(t,R_1,R_2)\in \F$. Let $F_{t,R_1,R_2}=(J_{t,R_1,R_2},H_{t,R_1,R_2}):\sphere\times\sphere\to\Qp^2$ be the $p$-adic coupled angular momentum system given by expression \eqref{eq:angular}. Let $k=R_2/R_1$ and $z_1\in\{1,-1\}$. Suppose that $(k,t)$ is in the limit region as in Definition \ref{def:regions} and $\Delta(k,t,z_1)=0$. Then $S$ (if $z_1=1$) or $T$ (if $z_1=-1$) is a degenerate critical point of $F_{t,R_1,R_2}$, and its local normal form at $S$ or $T$ is as follows.
	\begin{enumerate}
		\item If $p\equiv 1\mod 4$, it is the form \emph{R3} in \cite[Theorem 10.6]{CrePel-williamson}, that is,
		\[\left\{
		\begin{aligned}
			g_1(x,\xi,y,\eta) & =x\xi+y\eta; \\
			g_2(x,\xi,y,\eta) & =y\xi.
		\end{aligned}
		\right.\]
		\item If $p\equiv 3\mod 4$, it is the form \emph{I3}$(-1)$ in that list, that is,
		\[\left\{
		\begin{aligned}
			g_1(x,\xi,y,\eta) & =x\eta-y\xi; \\
			g_2(x,\xi,y,\eta) & =\frac{x^2+\xi^2}{2}.
		\end{aligned}
		\right.\]
	\end{enumerate}
\end{lemma}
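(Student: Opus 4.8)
The plan is to analyze, exactly as in Lemmas \ref{lemma:eigenvalues4} and \ref{lemma:eigenvectors4}, the matrix $A$ in \eqref{eq:omegaM2} at the boundary value $\Delta(k,t,z_1)=0$, where its four eigenvalues collapse to a repeated pair, and then to match the resulting degenerate $2$-jet against the list in \cite[Theorem 10.6]{CrePel-williamson}. The first step is to confirm that the point really is degenerate. Writing $A_J=\Omega_{R_1,R_2}(z_1,-1)^{-1}M_{J_{t,R_1,R_2}}(z_1,-1)$ and $A_H=\Omega_{R_1,R_2}(z_1,-1)^{-1}M_{H_{t,R_1,R_2}}(z_1,-1)$, a direct computation from \eqref{eq:MJ} and \eqref{eq:omega} shows that $A_J$ is a complex structure with eigenvalues $\pm\ii$, each of multiplicity two, and by Proposition \ref{prop:poisson-cam} (since $\{J_{t,R_1,R_2},H_{t,R_1,R_2}\}=0$) the matrices $A_J$ and $A_H$ commute. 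Decomposing the space into the two $2$-dimensional $\pm\ii$-eigenspaces of $A_J$, the matrix $A_H$ restricts on the $+\ii$-eigenspace to a $2\times 2$ block $B$ whose eigenvalues are precisely the roots $\lambda,\mu$ of the polynomial $P(x)$ of Lemma \ref{lemma:eigenvectors}.

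Consequently every pencil element $a_1A_J+a_2A_H$ has eigenvalues $\pm(a_1\ii+a_2\lambda)$ and $\pm(a_1\ii+a_2\mu)$; and since $\Delta(k,t,z_1)=0$ forces the double root $\lambda=\mu=-\tfrac{\ii}{2}(k(1-2t)+tz_1)$, each pencil element carries a repeated eigenvalue, so by the non-degeneracy definition the point is degenerate. This matches the vanishing of $v\tr\Omega\bar v=R_1z_1\sqrt{\Delta(k,t,z_1)}\,(2tz_1-2\ii\lambda)$ found in the proof of Lemma \ref{lemma:eigenvectors4} as one enters the limit locus.

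Next I would extract the two invariants that separate the cases. Because $\ord_p(k(1-2t))<0$ in the limit region, the quantity $k(1-2t)+tz_1$ is nonzero, so $\lambda\neq 0$. If $p\equiv 1\mod 4$ then $\ii\in\Qp$, hence $\lambda\in\Qp$ and $A$ has the doubled hyperbolic pattern $\{\lambda,\lambda,-\lambda,-\lambda\}\subset\Qp$; if $p\equiv 3\mod 4$ then $\ii\notin\Qp$ and $\lambda\in\ii\,\Qp$ is purely imaginary, giving a doubled imaginary pattern. In either case the restricted block $B$ is \emph{not} scalar: if $B=\lambda I$ held, then $A_H$ would equal $-\ii\lambda\,A_J$, a scalar multiple of $A_J$; but $A_J$ is block-diagonal whereas $A_H$ has nonzero entries coupling the two coordinate planes (the terms involving $t$, and $\ord_p(t)=0$ in the limit region). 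Hence $A_H$ is not proportional to $A_J$, so $B$ is a genuine $2\times 2$ Jordan block and $A_H$ is non-semisimple; this places the point in the R3/I3 family of \cite[Theorem 10.6]{CrePel-williamson} rather than in any semisimple degenerate type.

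Finally I would identify the exact normal form. Combining the eigenvalue field with the Jordan structure, the classification of \cite[Theorem 10.6]{CrePel-williamson} leaves only R3 when $p\equiv 1\mod 4$ and only I3$(c)$ when $p\equiv 3\mod 4$, with $c=-1$ forced because the semisimple generator of the pencil is $A_J$, whose associated parameter in the I3 family is $-1$. As in Lemmas \ref{lemma:eigenvectors3} and \ref{lemma:eigenvectors4}, the precise representative (and the value of $c$) would be pinned down through the eigenvector datum $v\tr\Omega\bar v$ of Lemma \ref{lemma:eigenvectors} together with the relevant criterion of \cite{CrePel-williamson}, and by Lemma \ref{lemma:darboux} the resulting linear normal form is also a local normal form. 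The hardest part is this last matching step: distinguishing the intended degenerate type from the others in \cite[Theorem 10.6]{CrePel-williamson} sharing the same eigenvalue pattern requires verifying both that the nilpotent part of $A_H$ has rank one and that the correct symplectic invariant is obtained, while handling the locus $\Delta(k,t,z_1)=0$ carefully so that the repeated eigenvalue genuinely persists across the whole pencil.
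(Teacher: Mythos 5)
Your proposal is correct and reaches both of the paper's conclusions (degeneracy, then R3 versus I3$(-1)$), but your route to degeneracy is genuinely different from the paper's. The paper proceeds by direct computation: it writes down $R_2\Omega_{R_1,R_2}^{-1}\dd^2(cJ_{t,R_1,R_2}+H_{t,R_1,R_2})$ for an arbitrary $c$, computes its characteristic polynomial, and observes that when $\Delta(k,t,z_1)=0$ it is the square of a quadratic, so every member of the pencil has multiple roots. You instead exploit structure: $A_J$ is a complex structure, $A_J$ and $A_H$ commute, and on each $A_J$-eigenspace the pencil eigenvalues are affine in $(a_1,a_2)$, so the collision $\lambda=\mu$ propagates to the whole pencil at once. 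This is cleaner and explains \emph{why} the repeated eigenvalue persists across the pencil, where the paper simply verifies it by factoring; the paper's computation, on the other hand, needs no auxiliary facts. Your Jordan-block step (non-proportionality of $A_H$ to $A_J$ forces a non-scalar block) is an extra that the paper's identification does not require, since among the degenerate types of \cite[Theorem 10.6]{CrePel-williamson} only R3 and I3$(c)$ have nonzero eigenvalues in every linear combination; it is nonetheless correct, though note that it implicitly uses that the block on the other $A_J$-eigenspace is minus the transpose of $B$ under the $\Omega$-pairing of the two (Lagrangian) eigenspaces, which is what makes ``$B$ scalar'' imply ``$A_H\propto A_J$''.

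Two smaller points. First, the commutation of $A_J$ and $A_H$ is the load-bearing step of your decomposition and deserves its one-line justification: expanding $\{J_{t,R_1,R_2},H_{t,R_1,R_2}\}=0$ to second order at the critical point, where both differentials vanish, gives exactly $[\,\Omega^{-1}\dd^2J,\ \Omega^{-1}\dd^2H\,]=0$. Second, your final paragraph hedges unnecessarily: once the type is known to be R3 or I3$(c)$ with $c\in Y_p$, the eigenvalue field alone forces the answer, exactly as in the paper. For $p\equiv 3\mod 4$ one has $Y_p=\{-1,p,-p\}$, and only $c=-1$ produces eigenvalues lying in $\Qp[\ii]$, so no eigenvector datum $v\tr\Omega\bar{v}$ and no further criterion from \cite{CrePel-williamson} is needed; indeed the paper invokes none in the degenerate case. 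Your sentence attributing $c=-1$ to the semisimple generator $A_J$ is the right idea stated loosely, and the subsequent deferral to the eigenvector criterion should simply be deleted rather than left as an unresolved step.
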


\begin{proof}
	We can check by computation that the characteristic polynomial of
	\[R_2\Omega_{R_1,R_2}^{-1}\dd^2 (cJ_{t,R_1,R_2}+H_{t,R_1,R_2})=\begin{pmatrix}
		0 & c-k(2t-1) & 0 & -ktz_1 \\
		k(2t-1)-c & 0 & ktz_1 & 0 \\
		0 & t & 0 & tz_1+c \\
		-t & 0 & -tz_1-c & 0
	\end{pmatrix}\]
	is the product of
	\[\lambda^2+\ii\lambda(2c+tz_1+k(1-2t))+(tz_1+c)(k(2t-1)-c)-kt^2z_1\]
	and
	\[\lambda^2-\ii\lambda(2c+tz_1+k(1-2t))+(tz_1+c)(k(2t-1)-c)-kt^2z_1.\]
	If $\Delta(k,t,z_1)=0$, this is equal to
	\begin{equation}\label{eq:charpoly-degenerate}
		\left(\lambda+\frac{\ii (2c+tz_1+k(1-2t))}{2}\right)^2\left(\lambda-\frac{\ii (2c+tz_1+k(1-2t))}{2}\right)^2.
	\end{equation}
	Hence the characteristic polynomial has multiple roots for any $c$, which means that the critical point $S$ or $T$ is degenerate.
	
	In order to determine its normal form, we have that the eigenvalues of \eqref{eq:charpoly-degenerate} are nonzero and in $\Qp[\ii]$ for any choice of $c$. This means that we are in a non-nilpotent case, and the only linear normal forms having nonzero eigenvalues in any linear combination are R3 and I3$(c)$, for $c\in Y_p$. If $p\equiv 1\mod 4$, $\Qp[\ii]=\Qp$, and the case with eigenvalues in $\Qp$ is R3. If $p\equiv 3\mod 4$, the case with eigenvalues in $\Qp[\ii]$ is I3$(-1)$. By Lemma \ref{lemma:darboux}, these are also the local normal forms.
\end{proof}

\section{Normal forms of $F_{t,R_1,R_2}$ for $t=0$ and $t=1$}\label{sec:01}

In this section we analyze the cases where $t=0$ and $t=1$, which were left out in the previous sections. Actually, the statements in these sections will still hold, even in the cases $t=0$ and $t=1$, but they require different proofs, because the existing proofs only work if the $p$-adic matrix $A$ has all eigenvalues different, and this does not happen if $t=0$ and $t=1$.

\begin{proposition}
	\letpprime. Let $\F$ be the parameter set in \eqref{eq:F} and let $(t,R_1,R_2)\in \F$. Let $F_{t,R_1,R_2}=(J_{t,R_1,R_2},H_{t,R_1,R_2}):\sphere\times\sphere\to\Qp^2$ be the $p$-adic coupled angular momentum system given by expression \eqref{eq:angular}. Let $P,Q,S,T$ be the points given in \eqref{eq:PQST}, which by Proposition \ref{prop:rank0} are rank zero non-degenerate critical points of $F_{t,R_1,R_2}$. Let $m\in\{P,Q,S,T\}$. For $i\in\{1,2\}$, define $c_i=p^2$ if $p\equiv 3\mod 4$ and $R_i$ has odd order, and otherwise $c_i=1$. If $t=0$, there exist a matrix $B\in\M_2(\Qp)$ and local coordinates $(x,\xi,y,\eta)$ centered at $m$ such that the $p$-adic analytic symplectic form is given by $\dd x\wedge\dd\xi+\dd y\wedge\dd\eta$ and
	\begin{align*}
		\widetilde{F}_{t,R_1,R_2}(x,\xi,y,\eta) & =B\circ(F_{t,R_1,R_2}(x,\xi,y,\eta)-F_{t,R_1,R_2}(m)) \\
		& =\frac{1}{2}(x^2+c_1\xi^2,y^2+c_2\eta^2)+\ocal((x,\xi,y,\eta)^3).
	\end{align*}
\end{proposition}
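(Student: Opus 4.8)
The plan is to exploit the fact that at $t=0$ the system decouples into a product of two one-degree-of-freedom systems, which sidesteps the obstruction that the matrix $A=R_2\Omega_{R_1,R_2}^{-1}M_{H_{t,R_1,R_2}}$ used in the previous sections acquires a repeated eigenvalue. Setting $t=0$ in \eqref{eq:angular} gives $H_{0,R_1,R_2}=z_1$ and $J_{0,R_1,R_2}=R_1z_1+R_2z_2$. Writing $m=(0,0,z_1^0,0,0,z_2^0)$ with $z_1^0,z_2^0\in\{1,-1\}$, the invertible linear map $B=\begin{pmatrix}0 & 1\\ 1/R_2 & -R_1/R_2\end{pmatrix}\in\M_2(\Qp)$ (well defined since $R_2\neq 0$) satisfies $B\circ(F_{0,R_1,R_2}-F_{0,R_1,R_2}(m))=(z_1-z_1^0,\,z_2-z_2^0)$. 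The first component depends only on the coordinates $(x_1,y_1)$ of the first sphere and the second only on $(x_2,y_2)$, and the symplectic form $R_1\omega_1\oplus R_2\omega_2$ respects this splitting; so it suffices to find symplectic coordinates on each $2$-dimensional factor separately and then combine them.

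First I would treat a single factor. On the first sphere the height function satisfies $z_1-z_1^0=-\frac{z_1^0}{2}(x_1^2+y_1^2)+\ocal(4)$, so by Proposition \ref{prop:hessians} its Hessian in the coordinates $(x_1,y_1)$ is $-\frac{1}{z_1^0}I_2$, while the symplectic matrix $\Omega_1$ is the top-left block of \eqref{eq:omega}. A direct computation then gives $\Omega_1^{-1}\bigl(-\frac{1}{z_1^0}I_2\bigr)=\begin{pmatrix}0 & 1/R_1\\ -1/R_1 & 0\end{pmatrix}$, whose eigenvalues are $\pm\ii/R_1$; hence $\lambda^2=-1/R_1^2\in\Qp$ and the factor falls in class (1) of Definition \ref{def:classes}. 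The same holds on the second factor with eigenvalues $\pm\ii/R_2$, and it remains only to pin down $c_1,c_2\in X_p$.

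For $p\equiv 1\mod 4$ we have $\Qp[\ii]=\Qp$, so $\ii/R_1\in\Qp$, the component is elliptic and $c_1=1$; for $p=2$, among the admissible values of $X_p$ only $c_1=1$ has $-c_1$ equal to the square of an element of $\Qp[\ii]$ outside $\Qp$, exactly as argued in Lemmas \ref{lemma:eigenvectors-lambda} and \ref{lemma:eigenvectors-lambda2}. For $p\equiv 3\mod 4$ both $1$ and $p^2$ remain square-equivalent candidates, and I would separate them with the eigenvector criterion of \cite[Proposition 4.6]{CrePel-williamson}: taking the eigenvector $v=(1,\ii)$ for $\ii/R_1$ gives $v\tr\Omega_1\bar v=-2\ii R_1/z_1^0$, of order $\ord_p(R_1)$, so by \cite[Proposition 4.7]{CrePel-williamson} the relevant quantity lies in $\DSq(\Qp,1)$ precisely when $\ord_p(R_1)$ is even. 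This yields $c_1=1$ when $\ord_p(R_1)$ is even and $c_1=p^2$ otherwise; the identical computation on the second factor, with $R_1$ replaced by $R_2$, gives the stated value of $c_2$.

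Finally I would assemble the two sets of symplectic coordinates into $(x,\xi,y,\eta)$; since each pair carries the standard form, the combined symplectic form is $\dd x\wedge\dd\xi+\dd y\wedge\dd\eta$, and $B\circ(F_{0,R_1,R_2}-F_{0,R_1,R_2}(m))=(z_1-z_1^0,\,z_2-z_2^0)=\frac{1}{2}(x^2+c_1\xi^2,\,y^2+c_2\eta^2)+\ocal(3)$, with Lemma \ref{lemma:darboux} upgrading these to local symplectic coordinates on a neighborhood of $m$. The main obstacle here is conceptual rather than computational: the single-Hamiltonian method of Sections~\ref{sec:PQ}--\ref{sec:ST-limit} breaks down because the relevant combination degenerates at $t=0$, so the key move is recognizing that at $t=0$ the phase space genuinely factorizes and reducing to one degree of freedom per sphere; once this is in place, the only real work is the eigenvector computation for $p\equiv 3\mod 4$ carried out above.
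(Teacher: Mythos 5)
Your proof is correct, and it reaches the paper's conclusion by a genuinely different organization of the same underlying computation. The paper never leaves the four-dimensional framework of \cite[Theorem A]{CrePel-williamson}: since $\Omega^{-1}\dd^2H_{0,R_1,R_2}$ alone has repeated eigenvalues, the paper replaces $H$ by the combination $J+H$, observes that $R_2\Omega^{-1}(M_{J}+M_{H})$ is block diagonal with four \emph{distinct} eigenvalues $\pm\ii(1+k)$ and $\pm\ii$, and then runs the eigenvector criterion of \cite[Proposition 4.6]{CrePel-williamson} on the four-dimensional eigenvectors $(1,\ii,0,0)$ and $(0,0,1,\ii)$, getting the test quantities $(1+k)^2/(R_1R_2^2)$ and $1/R_2^3$, whose orders have the parities of $\ord_p(R_1)$ and $\ord_p(R_2)$. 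You instead make the $t=0$ decoupling explicit, compose with the matrix sending $(J,H)$ to $(z_1,z_2)$, and classify each one-degree-of-freedom factor separately with the $2\times 2$ version of the same criterion; this is legitimate, and is in fact exactly how the paper itself treats the transversal Hessian at rank-one points (proof of Proposition \ref{prop:rank1-type}, case (a1), where the same eigenvalues $\pm\ii/R_1$ and the same parity test appear). What your route buys is transparency: non-degeneracy of the rank-zero point is immediate, since $a_1\dd^2z_1+a_2\dd^2z_2$ visibly has the four distinct eigenvalues $\pm\ii a_1/R_1$, $\pm\ii a_2/R_2$ for generic $a_1,a_2$. What the paper's route buys is that it never needs to argue that factor-by-factor normalization assembles into the class-(1) normal form of the pair, because Theorem A is invoked once on the full system.

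One imprecision you should fix: your final display asserts that the decoupling matrix $B$ itself realizes the normal form, i.e.\ that $(z_1-z_1^0,z_2-z_2^0)=\frac{1}{2}(x^2+c_1\xi^2,y^2+c_2\eta^2)+\ocal(3)$ in suitable symplectic coordinates. This cannot hold as written: the eigenvalues of $\Omega^{-1}$ times the Hessian are invariant under symplectic changes of coordinates, and they equal $\pm\ii/R_i$ on the left but $\pm\ii\sqrt{c_i}\in\{\pm\ii,\pm\ii p\}$ on the right. The matrix in the statement must be $\mathrm{diag}(\beta_1,\beta_2)\circ B$ with $\beta_i\in\{\pm R_i,\pm pR_i\}$ chosen to match the eigenvalues. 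This is harmless, since the proposition only asserts existence of \emph{some} $B\in\M_2(\Qp)$, but as written your last equation is false.
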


\begin{proof}
	We consider the linear combination
	\[R_2\Omega_{R_1,R_2}^{-1}(M_{J_{0,R_1,R_2}}(z_1,z_2)+M_{H_{0,R_1,R_2}}(z_1,z_2))=\begin{pmatrix}
		0 & 1+k & 0 & 0 \\
		-1-k & 0 & 0 & 0 \\
		0 & 0 & 0 & 1 \\
		0 & 0 & -1 & 0
	\end{pmatrix},\]
	which has four different eigenvalues, $\pm\ii(1+k)$ and $\pm\ii$. If $p\not\equiv 3\mod 4$, this gives directly the elliptic-elliptic linear normal form, which by Lemma \ref{lemma:darboux} is also a local normal form. If $p\equiv 3\mod 4$, we need to calculate eigenvectors.
	
	The eigenvector of the first two eigenvalues is $v=(1,\ii,0,0)$ and the condition in \cite[Proposition 4.6]{CrePel-williamson} evaluates to
	\[\frac{2a\lambda/R_2}{v\tr\Omega \bar{v}}=\frac{2\ii(1+k)^2/R_2^2}{2\ii R_1}=\frac{(1+k)^2}{R_1R_2^2}.\]
	This number has even order if and only if $R_1$ has even order, hence in this case the component has $c_1=1$ and otherwise $c_1=p^2$.
	
	The eigenvector of the last two eigenvalues is $v=(0,0,1,\ii)$, and the condition evaluates to
	\[\frac{2a\lambda/R_2}{v\tr\Omega \bar{v}}=\frac{2\ii/R_2^2}{2\ii R_2}=\frac{1}{R_2^3},\]
	which means that this component has $c_2=1$ if $R_2$ has even order and $c_2=p^2$ otherwise.
\end{proof}

\begin{proposition}
	\letpprime. Let $\F$ be the parameter set in \eqref{eq:F} and let $(t,R_1,R_2)\in \F$. Let $F_{t,R_1,R_2}=(J_{t,R_1,R_2},H_{t,R_1,R_2}):\sphere\times\sphere\to\Qp^2$ be the $p$-adic coupled angular momentum system given by expression \eqref{eq:angular}. Let $P,Q,S,T$ be the points given in \eqref{eq:PQST}, which by Proposition \ref{prop:rank0} are rank zero non-degenerate critical points of $F_{t,R_1,R_2}$. Let $m\in\{P,Q,S,T\}$. For $i\in\{1,2\}$, define $c_i=p^2$ if $p\equiv 3\mod 4$ and $R_i$ has odd order, and otherwise $c_i=1$. If $t=1$, there exist a matrix $B\in\M_2(\Qp)$ and local coordinates $(x,\xi,y,\eta)$ centered at $m$ such that the $p$-adic analytic symplectic form is given by $\dd x\wedge\dd\xi+\dd y\wedge\dd\eta$ and
	\begin{align*}
		\widetilde{F}_{t,R_1,R_2}(x,\xi,y,\eta) & =B\circ(F_{t,R_1,R_2}(x,\xi,y,\eta)-F_{t,R_1,R_2}(m)) \\
		& =\frac{1}{2}(x^2+c_1\xi^2,y^2+\eta^2)+\ocal((x,\xi,y,\eta)^3).
	\end{align*}
\end{proposition}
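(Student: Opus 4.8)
The plan is to follow the same strategy as in the case $t=0$ treated just above, the essential new difficulty being that the matrix $R_2\Omega_{R_1,R_2}(z_1,z_2)^{-1}M_{H_{1,R_1,R_2}}(z_1,z_2)$ is \emph{not} regular semisimple. Indeed, specializing Lemma \ref{lemma:eigenvectors} to $t=1$ kills the constant term $kt(t-1)z_1$ of the governing quadratic $x^2+\ii(k(tz_2-t+1)+tz_1)x+kt(t-1)z_1$, so two of the four eigenvalues of the $H$-matrix collapse to $0$. Hence $H_{1,R_1,R_2}$ alone carries no information about the transverse type, and, exactly as for $t=0$, I would replace it by the combination $J_{1,R_1,R_2}+H_{1,R_1,R_2}$.

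Concretely, I would first specialize the Hessians of Proposition \ref{prop:hessians} to $t=1$, form $G=M_{J_{1,R_1,R_2}}(z_1,z_2)+M_{H_{1,R_1,R_2}}(z_1,z_2)$, and compute $A:=R_2\Omega_{R_1,R_2}(z_1,z_2)^{-1}G$ from \eqref{eq:MJ}, \eqref{eq:MH} and \eqref{eq:omega}. A direct calculation, following the factorization pattern of Lemma \ref{lemma:eigenvectors}, shows that the characteristic polynomial of $A$ splits over $\Qp[\ii]$ with the four \emph{distinct} roots $\pm\ii R_2$ and $\pm\ii(R_2+kz_2+z_1)$; distinctness follows from $R_2\ne 0$ and the order estimates coming from $\ord_p(kz_2+z_1)=\ord_p(k)<0$. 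Since $\ii R_2$ and $\ii(R_2+kz_2+z_1)$ lie in $\Qp[\ii]$ and have squares in $\Qp$, \cite[Theorem A]{CrePel-williamson} places $m$ in class (1), so the linear normal form is $\tfrac12(x^2+c_1\xi^2,\,y^2+c_2\eta^2)$ for suitable $c_1,c_2\in X_p$; Lemma \ref{lemma:darboux} then upgrades this to a local normal form with the asserted matrix $B$ and remainder $\ocal(3)$.

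It remains to pin down the coefficients. When $p\equiv 1\bmod 4$ or $p=2$ the argument is immediate: as in the corresponding steps of Lemmas \ref{lemma:eigenvectors-lambda} and \ref{lemma:eigenvectors-mu}, the only element of $X_p$ whose negative is a square in $\Qp[\ii]$ (outside $\Qp$, when $p=2$) is $1$, so $c_1=c_2=1$. When $p\equiv 3\bmod 4$ I would run the eigenvector criterion \cite[Proposition 4.6]{CrePel-williamson} separately on each of the two eigenplanes, using the eigenvectors supplied by Lemma \ref{lemma:eigenvectors} at $t=1$, one of which spans the kernel of $R_2\Omega_{R_1,R_2}(z_1,z_2)^{-1}M_{H_{1,R_1,R_2}}(z_1,z_2)$. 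For each eigenvalue $\lambda$ with eigenvector $v$ I would compute $v\tr\Omega\bar{v}$, verify that its two summands do not cancel, and read off the type from the parity of $\ord_p\!\big(2\ii\lambda^2/(R_2^2\,v\tr\Omega\bar{v})\big)$, exactly as in Lemma \ref{lemma:eigenvectors-lambda}. The eigenplane attached to the eigenvalue of order $\ord_p(k)$ reproduces the computation of Lemma \ref{lemma:eigenvectors-lambda} and yields $c_1=1$ or $c_1=p^2$ according to the parity of $\ord_p(R_1)$, which is the value prescribed in the statement; the remaining eigenplane, lying in the kernel of the $H$-part, is handled as the $\mu$-component of Lemma \ref{lemma:eigenvectors-mu} and produces the second, elliptic factor $y^2+\eta^2$.

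The main obstacle is precisely this degeneracy at $t=1$: because one eigenplane sits in the kernel of the Hessian of $H_{1,R_1,R_2}$, the transverse type cannot be detected from $H$, and the classification must be extracted from the combination $J+H$. The delicate point inside the eigenvector criterion is to keep careful track of which summand of $v\tr\Omega\bar{v}$ dominates, so that no cancellation spoils the parity count; once this non-cancellation is established, as in the inner- and limit-region lemmas, the remaining bookkeeping of $p$-adic orders is routine.
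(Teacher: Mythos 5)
Your overall strategy is the same as the paper's in spirit (the Hessian of $H_{1,R_1,R_2}$ alone is degenerate at $t=1$, so one must classify via a linear combination with $J_{1,R_1,R_2}$), and your eigenvalue computation for $R_2\Omega_{R_1,R_2}^{-1}(M_{J}+M_{H})$ is correct: the spectrum is $\{\pm\ii R_2,\ \pm\ii(R_2+z_1+kz_2)\}$. But your distinctness claim is a genuine gap. The estimate $\ord_p(kz_2+z_1)=\ord_p(k)<0$ only rules out $R_2+z_1+kz_2=R_2$; it does not rule out $R_2+z_1+kz_2=-R_2$, nor $R_2+z_1+kz_2=0$, and both occur for admissible parameters. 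Concretely, for any $k$ with $\ord_p(k)<0$ set $R_1=\tfrac{1}{2}\left(-z_2-z_1/k\right)$ and $R_2=kR_1$; then $\ord_p(R_2)<\ord_p(R_1)$, so $(1,R_1,R_2)\in\F$, while $2R_2+z_1+kz_2=k(2R_1+z_2)+z_1=0$, so your matrix has eigenvalues $\pm\ii R_2$ each of multiplicity two, and the combination $J+H$ cannot detect the normal form at that point. This is precisely why the paper does not reuse the $t=0$ combination at $t=1$: its proof takes (up to normalization) $z_1J_{1,R_1,R_2}-R_2H_{1,R_1,R_2}$, whose displayed matrix has eigenvalues $\pm\ii k,\pm\ii$, and these are distinct for \emph{every} $(1,R_1,R_2)\in\F$ because $\ord_p(k)<0$ forces $k\ne 0,\pm 1$. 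Your argument would need either a separate treatment of the exceptional parameter set or a switch to such a uniformly nondegenerate combination.

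There is also an error in your conclusion for the second factor. If the eigenplane contained in the kernel of $\dd^2H_{1,R_1,R_2}$ is, as you propose, ``handled as the $\mu$-component of Lemma \ref{lemma:eigenvectors-mu}'', the eigenvector criterion does \emph{not} return $y^2+\eta^2$ unconditionally: it returns $y^2+c_2\eta^2$ with $c_2=1$ exactly when $\ord_p(R_2)$ is even and $c_2=p^2$ otherwise (for $p\equiv 3\mod 4$). This is what the paper's proof finds: its test quantity for the last two eigenvalues evaluates to $-1/\bigl(R_2^2(1-kz_1z_2)^2(R_1+R_2z_1z_2)\bigr)$, whose order has the parity of $\ord_p(R_2)$. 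One can also verify it directly: for $p=3$, $R_1=1$, $R_2=1/3$, $m=P$, the kernel plane of $\dd^2H$ carries a quadratic form equivalent to $\langle -3,-3\rangle$, which is not a symplectic multiple of $(y^2+\eta^2)/2$, so ``always elliptic'' is false. (The display in the proposition, which defines $c_2$ but then writes $y^2+\eta^2$, appears to carry a typo for $y^2+c_2\eta^2$; a correct proof must establish the $c_2$-dependent version, as the paper's does.)
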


\begin{proof}
	We consider the linear combination
	\[R_2\Omega_{R_1,R_2}^{-1}(z_1M_{J_{1,R_1,R_2}}(z_1,z_2)-M_{H_{1,R_1,R_2}}(z_1,z_2))=\begin{pmatrix}
		0 & z_1-kz_2 & 0 & kz_1 \\
		kz_2-z_1 & 0 & -kz_1 & 0 \\
		0 & z_2 & 0 & 0 \\
		-z_2 & 0 & 0 & 0
	\end{pmatrix}\]
	which has four different eigenvalues, $\pm\ii k$ and $\pm\ii$. If $p\not\equiv 3\mod 4$, this gives the elliptic-elliptic linear normal form (i.e. $(x^2+\xi^2)/2,(y^2+\eta^2)/2$), which by Lemma \ref{lemma:darboux} is also a local normal form. If $p\equiv 3\mod 4$, we need to calculate eigenvectors.
	
	The eigenvector of the first two eigenvalues of the matrix is
	\[v=\Big(z_1-kz_2,\ii(k-z_1z_2),z_2-\frac{z_1}{k},\ii(\frac{z_1z_2}{k}-1)\Big)\]
	and the condition in \cite[Proposition 4.6]{CrePel-williamson} evaluates to
	\begin{align*}
		\frac{2a\lambda/R_2}{v\tr\Omega \bar{v}} & =\frac{2\ii k^2/R_2^2}{-2\ii R_1z_1(z_1-kz_2)(k-z_1z_2)-2\ii R_2z_2(z_2-z_1/k)(z_1z_2/k-1)} \\
		& =-\frac{k^4/R_2^2}{R_1k^2z_1(z_1-kz_2)(k-z_1z_2)+R_2z_2(kz_2-z_1)(z_1z_2-k)} \\
		& =-\frac{R_1^4R_2^2}{R_2(kz_1+z_2)(z_1-kz_2)(k-z_1z_2)} \\
		& =-\frac{R_1^4R_2}{(kz_1+z_2)z_1(z_1-kz_2)^2}.
	\end{align*}
	This has even order if and only if $R_2/k=R_1$ has even order, hence in this case the component has $c_1=1$ and otherwise $c_1=p^2$.
	
	The eigenvector of the last two eigenvalues of the same matrix is
	\[v=(z_1-kz_2,\ii(1-kz_1z_2),z_2-kz_1,\ii(z_1z_2-k))\]
	and the condition in \cite[Proposition 4.6]{CrePel-williamson} evaluates to
	\begin{align*}
		\frac{2a\lambda/R_2}{v\tr\Omega \bar{v}} & =\frac{2\ii/R_2^2}{-2\ii R_1z_1(z_1-kz_2)(1-kz_1z_2)-2\ii R_2z_2(z_2-kz_1)(z_1z_2-k)} \\
		& =-\frac{1}{R_2^2(R_1(1-kz_1z_2)^2+R_2(1-kz_1z_2)(z_1z_2-k))} \\
		& =-\frac{1}{R_2^2(1-kz_1z_2)^2(R_1+R_2z_1z_2)}
	\end{align*}
	which has even order if and only if $R_2$ has even order. This means that the component corresponding to the last two eigenvalues has $c_2=1$ if $R_2$ has even order and otherwise $c_2=p^2$.
\end{proof}

\section{Normal forms of $F_{t,R_1,R_2}$ at rank $1$ points}\label{sec:rank1}

Now we study the rank $1$ critical points. The first step is to find them.

\begin{proposition}\label{prop:rank1}
	\letpprime. Let $\F$ be the parameter set in \eqref{eq:F} and let $(t,R_1,R_2)\in \F$. Let $F_{t,R_1,R_2}$ be the $p$-adic coupled angular momentum system in \eqref{eq:angular}. Then $F_{t,R_1,R_2}$ has critical points of rank $1$ at the following positions:
	\begin{enumerate-alph}
		\item if $t=0$, the points of the form $(0,0,\pm 1,x_2,y_2,z_2)$ and $(x_1,y_1,z_1,0,0,\pm 1)$;
		\item if $t\ne 0$, the points of the form $(x_1,y_1,z_1,x_2,y_2,z_2)$, where
		\begin{equation}\label{eq:z}
			\left\{
			\begin{aligned}
				z_1 & =\frac{1}{2}\left(\frac{t}{ck}+\frac{ck}{t}-\frac{ckt}{(1-t-c)^2}\right); \\
				z_2 & =\frac{1}{2}\left(\frac{t(1-t-c)}{c^2k^2}-\frac{t}{1-t-c}-\frac{1-t-c}{t}\right),
			\end{aligned}
			\right.
		\end{equation}
		$x_1$ and $y_1$ are such that $x_1^2+y_1^2+z_1^2=1$,
		\[
		\left\{
		\begin{aligned}
			x_2 & =\frac{1-t-c}{ck}x_1; \\
			y_2 & =\frac{1-t-c}{ck}y_1,
		\end{aligned}
		\right.
		\]
		$c\in\Qp$ with $c\ne 0$, and $k=R_2/R_1$;
		\item if $t=1$, the points of the form $(x_1,y_1,z_1,x_1,y_1,z_1)$ and $(x_1,y_1,z_1,-x_1,-y_1,-z_1)$.
	\end{enumerate-alph}
\end{proposition}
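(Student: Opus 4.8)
The plan is to characterize the rank $1$ points as those where $\dd J_{t,R_1,R_2}$ and $\dd H_{t,R_1,R_2}$ are linearly dependent but not both zero, and then to solve the resulting equations. By the computation in the proof of Proposition \ref{prop:rank0}, $\dd J_{t,R_1,R_2}=R_1\dd z_1+R_2\dd z_2$ vanishes only at the four poles in \eqref{eq:PQST} (the rank $0$ points), since $\dd z_1$ and $\dd z_2$ live on different factors; hence at a rank $1$ point $\dd J_{t,R_1,R_2}\neq 0$, and rank $1$ is equivalent to $\dd H_{t,R_1,R_2}=\lambda\,\dd J_{t,R_1,R_2}$ for some $\lambda\in\Qp$. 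Working in the chart $(x_1,y_1,x_2,y_2)$ where $z_1z_2\neq 0$ and substituting $\dd z_i=-(x_i\dd x_i+y_i\dd y_i)/z_i$, this single identity becomes four scalar equations. The pair coming from $\dd x_1,\dd x_2$ and the pair coming from $\dd y_1,\dd y_2$ coincide after exchanging $x_i\leftrightarrow y_i$, so $x_2/x_1=y_2/y_1=:\rho$, and clearing denominators leaves the two relations $t z_1 x_2=(1-t-\lambda R_1+t z_2)x_1$ and $t z_2 x_1=(t z_1-\lambda R_2)x_2$ (and likewise for $y$); this already produces the proportionality $x_2=\rho x_1$, $y_2=\rho y_1$ asserted in part (b).

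For part (b) I set $c=\lambda R_1$, so that $\lambda R_2=ck$ with $k=R_2/R_1$. Eliminating $\rho$ between the two relations gives $\rho=(1-t-c)/(ck)$ together with the single linear relation $z_2-\rho z_1=-(1-t-c)/t$. The decisive step is then the sphere constraint on the second factor: combining $x_2^2+y_2^2=\rho^2(x_1^2+y_1^2)=\rho^2(1-z_1^2)$ with $x_2^2+y_2^2=1-z_2^2$ yields $(z_2-\rho z_1)(z_2+\rho z_1)=1-\rho^2$. Because the first factor has already been pinned to the constant $-(1-t-c)/t$, this determines $z_2+\rho z_1$, and hence $z_1$ and $z_2$, \emph{uniquely} with no quadratic ambiguity; a short substitution (using $\tfrac{t}{ck}\cdot\tfrac{ck}{t}=1$) shows the outcome is exactly \eqref{eq:z}. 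The residual freedom is the choice of $(x_1,y_1)$ on the circle $x_1^2+y_1^2=1-z_1^2$, with $(x_2,y_2)$ then forced. Conversely $\lambda=0$ forces (for $t\neq 1$) a pole, so $c\neq 0$; and I would verify directly that the formulas \eqref{eq:z} satisfy both the linear relation and the sphere identity, so that the listed points genuinely lie on $\sphere\times\sphere$ and have $\dd J_{t,R_1,R_2}\neq 0$, i.e. are of rank exactly $1$.

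The cases $t=0$ and $t=1$ I would treat separately, because the denominators above degenerate. For $t=0$ one has $H_{t,R_1,R_2}=z_1$, so $\dd H_{t,R_1,R_2}=\dd z_1$ and the equation $\dd z_1=\lambda(R_1\dd z_1+R_2\dd z_2)$ decouples across the two factors; it forces either $\lambda=0$ with the first factor at a pole, giving $(0,0,\pm1,x_2,y_2,z_2)$, or $\lambda=1/R_1$ with the second factor at a pole, giving $(x_1,y_1,z_1,0,0,\pm1)$, which is part (a). For $t=1$, $H_{t,R_1,R_2}$ is the pairing $x_1x_2+y_1y_2+z_1z_2$; on the diagonal and on the antidiagonal it is the constant $\pm1$, and the two sphere relations give $\dd H_{t,R_1,R_2}=0$, so with $\dd J_{t,R_1,R_2}\neq 0$ these are rank $1$ points, namely the extra family (c). The remaining $\lambda\neq 0$ solutions at $t=1$ are already the $t=1$ specialization of part (b), so no points are missed.

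The main obstacle is the part (b) computation, and specifically the observation that the second-factor sphere constraint factors through the very linear form $z_2-\rho z_1$ that is already fixed by the coefficient equations; this is what collapses the two expected roots to the single closed form \eqref{eq:z}, and getting the reparametrization $c=\lambda R_1$ right is what makes the algebra match. A secondary technical point is handling the charts where $z_1=0$ or $z_2=0$, which I would treat either via the other affine charts on each $\sphere$ or by noting that the rank $1$ locus is cut out by the polynomial identity $\dd J_{t,R_1,R_2}\wedge\dd H_{t,R_1,R_2}=0$ and checking these boundary configurations directly, while confirming in each regime that $c\neq 0$ and that the denominators appearing in \eqref{eq:z} do not vanish.
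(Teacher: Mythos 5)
Your strategy is the same as the paper's: at a rank-$1$ point $\dd J_{t,R_1,R_2}\ne 0$ (it vanishes only at the four poles, by the argument of Proposition \ref{prop:rank0}), so rank $1$ amounts to $\dd H_{t,R_1,R_2}=\lambda\,\dd J_{t,R_1,R_2}$; matching coefficients forces $(x_2,y_2)$ to be proportional to $(x_1,y_1)$, and the sphere constraints then determine $z_1,z_2$ by a linear, not quadratic, elimination. Your part (b) computation, with $c=\lambda R_1$, $\rho=(1-t-c)/(ck)$ and the factorization $(z_2-\rho z_1)(z_2+\rho z_1)=1-\rho^2$, is correct and is the same elimination the paper performs by substituting $z_2=\rho(z_1-ck/t)$ into the second sphere equation; your $t=0$ analysis also matches the paper's.

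There is one genuine gap, at $t=1$. You prove that the diagonal and the antidiagonal satisfy $\dd H_{t,R_1,R_2}=0$ and hence consist of rank-$1$ points, but the proposition is a classification, and you never prove the converse for the $\lambda=0$ branch: that a rank-$1$ point at $t=1$ with $\dd H_{t,R_1,R_2}=0$ must lie on the diagonal or antidiagonal. ``No points are missed'' is asserted, not argued. The fix is short: the same coefficient matching gives, at $t=1$ and $\lambda=0$, that $(x_2,y_2,z_2)=c_1(x_1,y_1,z_1)$ for some scalar $c_1$, and both triples lying on $\sphere$ forces $c_1=\pm 1$; this is exactly the paper's Case 2a, where $c=0$ forces $t=1$ and the diagonal/antidiagonal conclusion. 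Relatedly, all your equations are derived in the chart $z_1z_2\ne 0$ after dividing by $x_1,y_1$, a restriction you flag but defer. The paper avoids it outright: since the only linear relation among $\dd x_i,\dd y_i,\dd z_i$ on the $i$-th sphere is $x_i\dd x_i+y_i\dd y_i+z_i\dd z_i=0$, the vanishing $1$-form must have $i$-th coefficient vector equal to $c_i(x_i,y_i,z_i)$, which yields the division-free system $tx_2=c_1x_1$, $ty_2=c_1y_1$, $1-t+tz_2-c=c_1z_1$, $tx_1=c_2x_2$, $ty_1=c_2y_2$, $tz_1-ck=c_2z_2$, valid at every point of $\sphere\times\sphere$; the identity $t^2=c_1c_2$ (which must hold, else the point has rank $0$) replaces your ratio $\rho$, and since it forces $c_1=t(1-t-c)/(ck)\ne 0$, it also shows that the denominators $1-t-c$ in \eqref{eq:z} never vanish, a point you left as a check. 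Recasting your argument with the constants $c_1,c_2$ instead of ratios closes both issues at once.
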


\begin{proof}
	Let $(t,R_1,R_2)\in\F$. Suppose that some point $(x_1,y_1,z_1,x_2,y_2,z_2)\in\sphere\times\sphere$ is a critical point of rank $1$. That the rank is $1$ implies that there exist $c,c'\in\Qp$ such that $c\dd J+c'\dd H=0$. If $c'=0$, that means $\dd J=0$ and $\dd z_1=\dd z_2=0$, which is only possible if $x_1=y_1=x_2=y_2=0$, and the point has rank $0$ instead of $1$. Hence, we may assume that $c'=-1$ and $c\dd J=\dd H$.
	
	This expands to
	\[c(R_1\dd z_1+R_2\dd z_2)=\dd H.\]
	If we absorb $R_1$ in the constant $c$, we get
	\begin{align*}
		c(\dd z_1+k\dd z_2) & =\dd H \\
		& =\dd((1-t)z_1+t(x_1x_2+y_1y_2+z_1z_2)) \\
		& =(1-t)\dd z_1+tx_1\dd x_2+tx_2\dd x_1+ty_1\dd y_2+ty_2\dd y_1+tz_1\dd z_2+tz_2\dd z_1.
	\end{align*}
	Rearranging
	\[tx_2\dd x_1+ty_2\dd y_1+(1-t+tz_2-c)\dd z_1+tx_1\dd x_2+ty_1\dd y_2+(tz_1-ck)\dd z_2=0.\]
	The linear dependencies between the forms are $x_1\dd x_1+y_1\dd y_1+z_1\dd z_1=x_2\dd x_2+y_2\dd y_2+z_2\dd z_2=0$, hence the previous equation implies
	\begin{equation}\label{eq:rank1}
		\left\{
		\begin{aligned}
			tx_2 & =c_1x_1; \\
			ty_2 & =c_1y_1; \\
			1-t+tz_2-c & =c_1z_1; \\
			tx_1 & =c_2x_2; \\
			ty_1 & =c_2y_2; \\
			tz_1-ck & =c_2z_2,
		\end{aligned}
		\right.
	\end{equation}
	for some $c_1,c_2\in\Qp$.
	
	\textit{Case 1: $t=0$.} Then we have that $c_1x_1=c_1y_1=c_2x_2=c_2y_2=0$. At least one between $c_1$ and $c_2$ must be $0$, otherwise all the coordinates $x_1,y_1,x_2,y_2$ would be $0$ and the point would have rank $0$.
	
	If $c_1=0$, \eqref{eq:rank1} implies that $1-c=c_1z_1=0$, that is, $c=1$. Then $c_2z_2=-k\ne 0$ and $c_2\ne 0$, hence $x_2=y_2=0$ and the point has the form $(x_1,y_1,z_1,0,0,\pm 1)$.
	
	If $c_2=0$, \eqref{eq:rank1} implies that $-ck=0$ and $c=0$. Then $c_1z_1=1$ and $c_1\ne 0$, hence $x_1=y_1=0$ and the point has the form $(0,0,\pm 1,x_2,y_2,z_2)$.
	
	\textit{Case 2: $t\ne 0$.} First we prove that $t^2=c_1c_2$. Suppose on the contrary that $t^2\ne c_1c_2$. By multiplying the first and fourth equations of \eqref{eq:rank1}, we have $t^2x_1x_2=c_1c_2x_1x_2$ and $x_1x_2=0$. If $x_1=0$, then $tx_2=0$ and $x_2=0$, and if $x_2=0$, then $tx_1=0$ and $x_1=0$, so in both cases we have $x_1=x_2=0$. In the same way, starting from the second and fifth equations, we deduce that $y_1=y_2=0$, and the point has rank $0$, not $1$. Hence we must have $t^2=c_1c_2$. Since $t\ne 0$, this implies $c_1\ne 0$ and $c_2\ne 0$.
	
	Now we multiply the third equation by $t$ and the sixth by $c_1$, and add them:
	\[t(1-t+tz_2-c)+c_1(tz_1-ck)=tc_1z_1+c_1c_2z_2=tc_1z_1+t^2z_2\]
	\begin{equation}\label{eq:rank1-2}
		t(1-t-c)-c_1ck=0
	\end{equation}
	
	\textit{Case 2a: $c=0$.} Then \eqref{eq:rank1-2} simplifies to $t(1-t)=0$. Since $t\ne 0$, we have $t=1$. Hence
	\[(x_2,y_2,z_2)=c_1(x_1,y_1,z_1).\]
	Since both points are in the sphere, $c_1\in\{1,-1\}$.
	
	\textit{Case 2b: $c\ne 0$.} Then \eqref{eq:rank1-2} implies
	\[
	\left\{
	\begin{aligned}
		c_1 & =\frac{t(1-t-c)}{ck}; \\
		c_2 & =\frac{t^2}{c_1}=\frac{ckt}{1-t-c}.
	\end{aligned}
	\right.
	\]
	Substituting in \eqref{eq:rank1},
	\begin{equation}\label{eq:rank1-3}
		\left\{
		\begin{aligned}
			x_2 & =\frac{(1-t-c)x_1}{ck}; \\
			y_2 & =\frac{(1-t-c)y_1}{ck}; \\
			z_2 & =\frac{(1-t-c)(tz_1-ck)}{ckt}.
		\end{aligned}
		\right.
	\end{equation}
	Since $(x_1,y_1,z_1)$ and $(x_2,y_2,z_2)$ are both in the sphere,
	\begin{align*}
		1 & =x^2+y^2+z^2 \\
		& =\frac{(1-t-c)^2}{c^2k^2}\left(x_1^2+y_1^2+\left(z_1-\frac{ck}{t}\right)^2\right) \\
		& =\frac{(1-t-c)^2}{c^2k^2}\left(1-\frac{2z_1ck}{t}+\frac{c^2k^2}{t^2}\right),
	\end{align*}
	\[1-\frac{2z_1ck}{t}+\frac{c^2k^2}{t^2}=\frac{c^2k^2}{(1-t-c)^2},\]
	\[z_1=-\frac{t}{2ck}\left(\frac{c^2k^2}{(1-t-c)^2}-1-\frac{c^2k^2}{t^2}\right)=\frac{1}{2}\left(-\frac{ckt}{(1-t-c)^2}+\frac{t}{ck}+\frac{ck}{t}\right).\]
	Substituting in \eqref{eq:rank1-3},
	\[z_2=\frac{1}{2}\left(\frac{t(1-t-c)}{c^2k^2}-\frac{t}{1-t-c}-\frac{1-t-c}{t}\right),\]
	and we are done.
\end{proof}

\begin{definition}
	\letpprime. Let $f:(\Qp)^3\to\Qp$ be given by
	\[f(c,k,t)=(1-t-c)^2(1-t)^2(1-z_1^2)+((1-t-c)^2z_1+c^2kz_2)^2,\]
	where $z_1$ and $z_2$ are given by \eqref{eq:z}.
\end{definition}

\begin{proposition}\label{prop:rank1-type}
	The Williamson type of the critical points in Proposition \ref{prop:rank1} is as follows:
	\begin{enumerate}
		\item those in part (a) of the form $(0,0,\pm 1,x_2,y_2,z_2)$ and in point (c) are transversally elliptic, that is, their local normal form is
		\[g_1(x,\xi,y,\eta)=x^2+\xi^2,g_2=\eta,\]
		except if $p\equiv 3\mod 4$ and $R_1$ has odd order, in which case
		\[g_1(x,\xi,y,\eta)=x^2+p^2\xi^2,g_2=\eta;\]
		\item those in part (a) of the form $(x_1,y_1,z_1,0,0,\pm 1)$ are transversally elliptic, that is, their local normal form is
		\[g_1(x,\xi,y,\eta)=x^2+\xi^2,g_2=\eta,\]
		except if $p\equiv 3\mod 4$ and $R_2$ has odd order, in which case
		\[g_1(x,\xi,y,\eta)=x^2+p^2\xi^2,g_2=\eta;\]
		\item those in part (b) have the local normal form
		\[g_1(x,\xi,y,\eta)=x^2+c'\xi^2,g_2=\eta,\]
		where $c'\in X_p$ is chosen so that $f(c,k,t)/c'$ is a square in $\Qp$ and it has a square root $r$ such that $rR_1(z_1^2-1)\in\DSq(\Qp,c').$
	\end{enumerate}
\end{proposition}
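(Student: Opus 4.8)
The plan is to treat each family of rank $1$ points as a non-degenerate rank $1$ singularity and to read off $c'$ from the transverse Hessian, running the same $p$-adic Williamson bookkeeping as in Sections \ref{sec:PQ}--\ref{sec:ST-limit} but one degree of freedom lower. Recall from the proof of Proposition \ref{prop:rank1} that at such a point one combination of $\dd J_{t,R_1,R_2}$ and $\dd H_{t,R_1,R_2}$ vanishes, namely $\dd G=0$ at $m$ for $G=\frac{c}{R_1}J_{t,R_1,R_2}-H_{t,R_1,R_2}$ (with $c$ as in Proposition \ref{prop:rank1}), whereas $\dd J_{t,R_1,R_2}\ne 0$ there. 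I would therefore take $G$ as the singular component and $J_{t,R_1,R_2}$ as the regular component to be straightened to $\eta$. By Lemma \ref{lemma:darboux} it suffices to determine the linear normal form, so the whole computation reduces to that of the Hessian $\dd^2 G$ at $m$ together with the matrix $\Omega$ of $\omega$, both written in the coordinates $(x_1,y_1,x_2,y_2)$ after eliminating $z_1,z_2$ via the sphere constraints exactly as in Proposition \ref{prop:hessians}.

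For the points in parts (a) and (c) I expect the analysis to decouple into a single elliptic block. For a point $(0,0,\pm 1,x_2,y_2,z_2)$ with $t=0$ the singular component is $G=H_{t,R_1,R_2}=z_1$, whose differential vanishes at the pole of the first sphere; its Hessian lives in the transverse directions $(x_1,y_1)$ and, paired with $R_1\omega_1$, gives a lone elliptic block whose square class depends only on $\ord_p(R_1)$, so that $c'=1$ in general and $c'=p^2$ exactly when $p\equiv 3\mod 4$ and $\ord_p(R_1)$ is odd. The points $(x_1,y_1,z_1,0,0,\pm 1)$ are the mirror situation, with $G=J_{t,R_1,R_2}-R_1H_{t,R_1,R_2}=R_2z_2$ and the square class now controlled by $\ord_p(R_2)$; and the case $t=1$ reduces to the same elliptic block after using $H_{t,R_1,R_2}=x_1x_2+y_1y_2+z_1z_2$ and the (anti)alignment of the two factors. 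This reproduces statements (1) and (2) and, in particular, confines $c'$ to the square class $\{1,p^2\}$ there.

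The substance of the proposition is part (b). Here I would substitute the explicit values of $z_1,z_2,x_2,y_2$ from Proposition \ref{prop:rank1} into $\dd^2 G$, producing a $4\times 4$ matrix which, because $m$ has rank $1$, has a two-dimensional degenerate block spanned by the flow direction of the regular function and its symplectic conjugate. Reducing modulo that block---equivalently, restricting $\Omega^{-1}\dd^2 G$ to the symplectic $2$-plane transverse to the critical orbit---leaves a $2\times 2$ matrix with a pair of eigenvalues $\pm\nu$. As in the rank $0$ arguments, the square class of $-\nu^2$ determines $c'$ up to squares, and the claim is that this class is exactly that of $f(c,k,t)$, so that $c'$ is forced to be the unique element of $X_p$ for which $f(c,k,t)/c'$ is a square in $\Qp$. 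The factor $1-z_1^2=x_1^2+y_1^2$ appearing in $f(c,k,t)$ is precisely the contribution of the Hessian of the height on the first sphere at the base point, which explains the shape of $f(c,k,t)$.

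To finish, I must break the residual ambiguity between distinct elements of $X_p$ sharing the same square class (for instance $1$ and $p^2$, or $p$ and $c_0^2p$), exactly as Lemmas \ref{lemma:eigenvectors-lambda}, \ref{lemma:eigenvectors-mu} and \ref{lemma:eigenvectors3} do at rank $0$. I would compute $v\tr\Omega\bar{v}$ for an eigenvector $v$ of the reduced transverse block and apply the criterion of \cite[Proposition 4.6]{CrePel-williamson}; after simplification this should collapse to testing whether $rR_1(z_1^2-1)$ belongs to $\DSq(\Qp,c')$, where $r$ is the chosen square root of $f(c,k,t)/c'$---which is the condition in the statement. The hard part will be precisely this last reduction: carrying out the rank $1$ quotient so that the transverse block, its eigenvalues, the discriminant $f(c,k,t)$ and the $\DSq$-test all emerge with the correct normalisations out of the unwieldy closed forms for $z_1$ and $z_2$. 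Once the transverse $2\times 2$ datum is pinned down, the classification is mechanical and follows from the same $p$-adic Williamson criteria already used for the rank $0$ points.
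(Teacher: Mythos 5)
Your setup is exactly the paper's: the same singular combination $G=H_{t,R_1,R_2}-(c/R_1)J_{t,R_1,R_2}$, the same straightening of $J_{t,R_1,R_2}$ to $\eta$, the same reduction to the transverse $2\times 2$ block of $\Omega^{-1}\dd^2G$, and the same two-step classification (square class of the eigenvalues, then the eigenvector criterion of \cite[Proposition 4.6]{CrePel-williamson}); your sketch of parts (1)--(2) matches the paper's cases (a1), (a2), (c) and lands on the correct parity conditions for $\ord_p(R_1)$ and $\ord_p(R_2)$. The genuine gap is in part (b), which you yourself call the substance: you never compute the transverse block. The two assertions you leave hanging --- that the reduced eigenvalues $\pm\nu$ satisfy that $-\nu^2$ lies in the square class of $f(c,k,t)$, and that the $\DSq$-test ``should collapse'' to $rR_1(z_1^2-1)\in\DSq(\Qp,c')$ --- are precisely the content of statement (3); nothing in your text derives them from the formulas of Proposition \ref{prop:rank1}. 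The paper's proof \emph{is} this computation: after normalizing $\hat{y}_1=\hat{y}_2=0$, it writes $\dd^2(H_{t,R_1,R_2}-cJ_{t,R_1,R_2})$ in the coordinates $(y_1,z_1,y_2,z_2)$, passes to symplectic coordinates $x=y_1/\hat{x}_1-y_2/\hat{x}_2$, $\xi=R_1z_1$, $y=y_2/\hat{x}_2$, $\eta=R_1z_1+R_2z_2$, and then, via a chain of identities using the sphere relations and the explicit values of $\hat{z}_1,\hat{z}_2,\hat{x}_2$, obtains $\pm\lambda=\pm\ii\sqrt{f(c,k,t)}/(c(1-t-c)R_2)$, the eigenvector $(\lambda,-\hat{x}_1\hat{x}_2)$, the normalization $a=r/(c(1-t-c)R_2)$, and only then the stated condition $rR_1(\hat{z}_1^2-1)\in\DSq(\Qp,c')$. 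Until that calculation is carried out, the specific function $f$ and the specific membership test in statement (3) are asserted, not proven.

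A smaller but real flaw: you propose to work throughout in the coordinates $(x_1,y_1,x_2,y_2)$ ``exactly as in Proposition \ref{prop:hessians}''. Those are local coordinates on $\sphere\times\sphere$ only where $z_1z_2\ne 0$, which holds at the four poles $P,Q,S,T$ but not at all rank $1$ points: part (a) allows $z_2=0$ (for instance $(0,0,1,1,0,0)$ is a rank $1$ point when $t=0$), and in parts (b) and (c) the values $\hat{z}_1$ or $\hat{z}_2$ can vanish for particular parameters. This is why the paper first uses rotation invariance about the $z$-axis to arrange $\hat{y}_i=0$ (hence $\hat{x}_i\ne 0$) and then takes $(x_1,y_1,y_2,z_2)$, $(y_1,z_1,x_2,y_2)$ or $(y_1,z_1,y_2,z_2)$ as coordinates depending on the case; your plan needs the same device before the Hessian computation can even be set up.
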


\begin{proof}
	First note that, by Lemma \ref{lemma:darboux}, the linear normal forms obtained from \cite[Theorem A]{CrePel-williamson} are also local normal forms.
	
	\textit{Case (a1): $t=0$ and the point is $(0,0,\hat{z}_1,\hat{x}_2,\hat{y}_2,\hat{z}_2)$ for $\hat{z}_1\in\{1,-1\}.$}
	
	Since the system is invariant by rotation of both spheres around the $z$-axis, we may assume that $\hat{y}_2=0$ at the critical point. Note that this implies $\hat{x}_2\ne 0$.
	
	In this case $c=0$ and $\dd H=0$. $\dd^2 H$ in the coordinates $(x_1,y_1,y_2,z_2)$ is given by
	\[\begin{pmatrix}
		-\hat{z}_1 & 0 & 0 & 0 \\
		0 & -\hat{z}_1 & 0 & 0 \\
		0 & 0 & 0 & 0 \\
		0 & 0 & 0 & 0
	\end{pmatrix}.\]
	We can change coordinates so that the symplectic form is standard at the critical point and $\dd J=\dd\eta$:
	\[x=x_1,\xi=R_1\hat{z}_1y_1,y=\frac{y_2}{\hat{x}_2},\eta=R_2z_2,\]
	and the Hessian becomes
	\[\begin{pmatrix}
		-\hat{z}_1 & 0 & 0 & 0 \\
		0 & -\frac{\hat{z}_1}{R_1^2} & 0 & 0 \\
		0 & 0 & 0 & 0 \\
		0 & 0 & 0 & 0
	\end{pmatrix}.\]
	The submatrix formed by the first two rows and columns, after multiplying by $\Omega_0^{-1}$, results in
	\[\begin{pmatrix}
		0 & \frac{\hat{z}_1}{R_1^2} \\
		-\hat{z}_1 & 0
	\end{pmatrix},\]
	which has as eigenvalues $\ii/R_1$ and $-\ii/R_1$ and as eigenvectors $(-\hat{z}_1\ii/R_1,1)$ and $(\hat{z}_1\ii/R_1,1)$. The first component of the normal form is $x^2+c'\xi^2$ for some $c'\in X_p$. If $p\not\equiv 3\mod 4$, the only possible value is $c'=1$. If $p\equiv 3\mod 4$, we must have $c'=1$ or $c'=p^2$. The criterion in \cite[Proposition 4.6]{CrePel-williamson}, with $c=1$ and $a=1/R_1$, gives
	\[\frac{2a\lambda}{v\tr\Omega_0\bar{v}}=\frac{2\ii/R_1^2}{-2\hat{z}_1\ii/R_1}=-\frac{\hat{z}_1}{R_1}\in\DSq(\Qp,1),\]
	which is equivalent to saying that $R_1$ has even order. Hence $c'=1$ in that case and otherwise $c'=p^2$.
	
	\textit{Case (a2): $t=0$ and the point is $(\hat{x}_1,\hat{y}_1,\hat{z}_1,0,0,\hat{z}_2)$, for $\hat{z}_2\in\{1,-1\}$.}
	
	Again, since the system is invariant by rotation, we assume that $\hat{y}_1=0$ at the critical point. Note that this implies $\hat{x}_1\ne 0$.
	
	Now $c=1$ and $\dd J/R_1=\dd H$. We take coordinates $(y_1,z_1,x_2,y_2)$. The sum $-\dd^2 J/R_1+\dd^2 H$ is given by
	\[\begin{pmatrix}
		0 & 0 & 0 & 0 \\
		0 & 0 & 0 & 0 \\
		0 & 0 & k\hat{z}_2 & 0 \\
		0 & 0 & 0 & k\hat{z}_2
	\end{pmatrix}.\]
	We change coordinates so that the symplectic form is standard and $\dd J=\dd\eta$:
	\[x=x_2,\xi=R_2\hat{z}_2y_2,y=\frac{y_1}{\hat{x}_1},\eta=R_1z_1,\]
	and the Hessian becomes
	\[\begin{pmatrix}
		k\hat{z}_2 & 0 & 0 & 0 \\
		0 & k\frac{\hat{z}_2}{R_2^2} & 0 & 0 \\
		0 & 0 & 0 & 0 \\
		0 & 0 & 0 & 0
	\end{pmatrix}.\]
	The submatrix formed by the first two rows and columns, after multiplying by $\Omega_0^{-1}$, results in
	\[\begin{pmatrix}
		0 & -k\frac{\hat{z}_2}{R_2^2} \\
		k\hat{z}_2 & 0
	\end{pmatrix},\]
	which has as eigenvalues $\ii k/R_2$ and $-\ii k/R_2$, or what is the same, $\ii/R_1$ and $-\ii/R_1$, with corresponding eigenvectors $(\hat{z}_2\ii/R_2,1)$ and $(-\hat{z}_2\ii/R_2,1)$. The first component of the normal form is $x^2+c'\xi^2$ for some $c'\in X_p$. If $p\not\equiv 3\mod 4$, the only possible value is $c'=1$. If $p\equiv 3\mod 4$, we must have $c'=1$ or $c'=p^2$. The criterion in \cite[Proposition 4.6]{CrePel-williamson}, with $c=1$ and $a=1/R_1$, gives
	\[\frac{2a\lambda}{v\tr\Omega_0\bar{v}}=\frac{2\ii/R_1^2}{2\hat{z}_2\ii/R_2}=\frac{R_2\hat{z}_2}{R_1^2}\in\DSq(\Qp,1),\]
	which is equivalent to saying that $R_2$ has even order. Hence $c'=1$ in that case and otherwise $c'=p^2$.
	
	\textit{Case (b): the critical point is one of those in Proposition \ref{prop:rank1}(b).} Let $(\hat{x}_1,\hat{y}_1,\hat{z}_1,\hat{x}_2,\hat{y}_2,\hat{z}_2)$ be the critical point. Since the system is invariant by rotation, we may assume that $\hat{y}_1=0$, which implies that $\hat{y}_2=0$. In this case, we are not at a pole of any sphere: if for example $\hat{x}_1=0$, this implies $\hat{x}_2=0$, so we are at a pole of both spheres and the point would have rank $0$.
	
	We have at those points that $\dd H=c\dd J$, so we must compute the Hessian of $H-cJ$. If we take $(y_1,z_1,y_2,z_2)$ as local coordinates (we can do that because $\hat{x}_1\ne 0$ and $\hat{x}_2\ne 0$), the Hessian of $J$ is $0$, and that of $H$ takes the form
	\begin{equation}\label{eq:hessian-rank1}
		t\begin{pmatrix}
			-\frac{\hat{x}_2}{\hat{x}_1} & 0 & 1 & 0 \\
			0 & -\frac{\hat{x}_2}{\hat{x}_1^3} & 0 & \frac{\hat{z}_1\hat{z}_2}{\hat{x}_1\hat{x}_2}+1 \\
			1 & 0 & -\frac{\hat{x}_1}{\hat{x}_2} & 0 \\
			0 & \frac{\hat{z}_1\hat{z}_2}{\hat{x}_1\hat{x}_2}+1 & 0 & -\frac{\hat{x}_1}{\hat{x}_2^3}
		\end{pmatrix}.
	\end{equation}
	We change coordinates so that the symplectic form is standard and $\dd J=\dd\eta$:
	\[x=\frac{y_1}{\hat{x}_1}-\frac{y_2}{\hat{x}_2},\xi=R_1z_1,y=\frac{y_2}{\hat{x}_2},\eta=R_1z_1+R_2z_2.\]
	The Hessian in the new coordinates is the result of multiplying
	\[\begin{pmatrix}
		\hat{x}_1 & 0 & 0 & 0 \\
		0 & \frac{1}{R_1} & 0 & -\frac{1}{R_2} \\
		\hat{x}_1 & 0 & \hat{x}_2 & 0 \\
		0 & 0 & 0 & \frac{1}{R_2}
	\end{pmatrix}
	M
	\begin{pmatrix}
		\hat{x}_1 & 0 & \hat{x}_1 & 0 \\
		0 & \frac{1}{R_1} & 0 & 0 \\
		0 & 0 & \hat{x}_2 & 0 \\
		0 & -\frac{1}{R_2} & 0 & \frac{1}{R_2}
	\end{pmatrix},\]
	where $M$ is the matrix from \eqref{eq:hessian-rank1}. We only need to take the first two rows and columns of this product, which are
	\begin{equation}\label{eq:partial-hessian-rank1}
		t\begin{pmatrix}
			-\hat{x}_1\hat{x}_2 & 0 \\
			0 & -\frac{\hat{x}_2}{R_1^2\hat{x}_1^3}-\frac{\hat{x}_1}{R_2^2\hat{x}_2^3}-\frac{2}{R_1R_2}\left(\frac{\hat{z}_1\hat{z}_2}{\hat{x}_1\hat{x}_2}+1\right)
		\end{pmatrix}.
	\end{equation}
	The multiplicative constant $t$ can be ignored, because it will not change the Williamson type of the point. After multiplying by $\Omega_0^{-1}$, the eigenvalues are
	\begin{align*}
		\pm\lambda & =\pm\ii\sqrt{\hat{x}_1\hat{x}_2\left(\frac{\hat{x}_2}{R_1^2\hat{x}_1^3}+\frac{\hat{x}_1}{R_2^2\hat{x}_2^3}+\frac{2}{R_1R_2}\left(\frac{\hat{z}_1\hat{z}_2}{\hat{x}_1\hat{x}_2}+1\right)\right)} \\
		& =\pm\ii\sqrt{\frac{\hat{x}_2^2}{R_1^2\hat{x}_1^2}+\frac{\hat{x}_1^2}{R_2^2\hat{x}_2^2}+\frac{2}{R_1R_2}(\hat{x}_1\hat{x}_2+\hat{z}_1\hat{z}_2)} \\
		& =\pm\ii\sqrt{\frac{\hat{x}_2^2(\hat{x}_1^2+\hat{z}_1^2)}{R_1^2\hat{x}_1^2}+\frac{\hat{x}_1^2(\hat{x}_2^2+\hat{z}_2^2)}{R_2^2\hat{x}_2^2}+\frac{2}{R_1R_2}(\hat{x}_1\hat{x}_2+\hat{z}_1\hat{z}_2)} \\
		& =\pm\ii\sqrt{\frac{\hat{x}_2^2}{R_1^2}+\frac{\hat{x}_1^2}{R_2^2}+\frac{2\hat{x}_1\hat{x}_2}{R_1R_2}+\frac{\hat{x}_2^2\hat{z}_1^2}{R_1^2\hat{x}_1^2}+\frac{\hat{x}_1^2\hat{z}_2^2}{R_2^2\hat{x}_2^2}+\frac{2\hat{z}_1\hat{z}_2}{R_1R_2}} \\
		& =\pm\ii\sqrt{\left(\frac{\hat{x}_2}{R_1}+\frac{\hat{x}_1}{R_2}\right)^2+\left(\frac{\hat{x}_2\hat{z}_1}{R_1\hat{x}_1}+\frac{\hat{x}_1\hat{z}_2}{R_2\hat{x}_2}\right)^2} \\
		& =\pm\ii\sqrt{\left(\frac{(1-t-c)\hat{x}_1}{ckR_1}+\frac{\hat{x}_1}{R_2}\right)^2+\left(\frac{(1-t-c)\hat{z}_1}{ckR_1}+\frac{ck\hat{z}_2}{(1-t-c)R_2}\right)^2} \\
		& =\pm\ii\sqrt{\frac{\hat{x}_1^2}{R_2^2}\left(\frac{1-t-c}{c}+1\right)^2+\left(\frac{(1-t-c)\hat{z}_1}{cR_2}+\frac{ck\hat{z}_2}{(1-t-c)R_2}\right)^2} \\
		& =\pm\ii\sqrt{\frac{1-\hat{z}_1^2}{R_2^2}\left(\frac{1-t}{c}\right)^2+\left(\frac{(1-t-c)^2\hat{z}_1+c^2k\hat{z}_2}{c(1-t-c)R_2}\right)^2} \\
		& =\pm\frac{\ii\sqrt{f(c,k,t)}}{c(1-t-c)R_2}.
	\end{align*}
	According to \cite[Proposition 4.6]{CrePel-williamson}, the first component of the normal form is $x^2+c'\xi^2$, where $c'$ is chosen so that $-\lambda^2/c'$ is a square in $\Qp$. This means that $f(c,k,t)/c'$ must be a square, as we wanted. Let $r$ be its square root. The eigenvector is $(\lambda,-\hat{x}_1\hat{x}_2)$, and we need $a$ such that
	\[a^2=-\frac{\lambda^2}{c'}=\frac{f(c,k,t)}{c^2(1-t-c)^2R_2^2c'}\Longrightarrow a=\frac{r}{c(1-t-c)R_2}.\]
	The eigenvector condition gives
	\[\frac{2a\lambda}{-2\lambda\hat{x}_1\hat{x}_2}=-\frac{ack}{(1-t-c)\hat{x}_1^2}=-\frac{r}{R_1(1-t-c)^2(1-\hat{z}_1^2)}\in\DSq(\Qp,c'),\]
	which is equivalent to
	\[rR_1(\hat{z}_1^2-1)\in\DSq(\Qp,c'),\]
	as we wanted.
	
	\textit{Case (c): the critical point is one of those in Proposition \ref{prop:rank1}(c).} In this case, $t=1$, $(\hat{x}_2,\hat{y}_2,\hat{z}_2)=c_1(\hat{x}_1,\hat{y}_1,\hat{z}_1)$ for $c_1\in\{1,-1\}$, and $\dd H=0$. The argument continues in the same way as in the previous case: we assume that $\hat{y}_1=\hat{y}_2=0$, the Hessian of $H$ at the critical point is \eqref{eq:hessian-rank1}, and the first two rows and columns of the Hessian in symplectic coordinates are \eqref{eq:partial-hessian-rank1}. We also have that
	\[\pm\lambda=\pm\ii\sqrt{\left(\frac{\hat{x}_2}{R_1}+\frac{\hat{x}_1}{R_2}\right)^2+\left(\frac{\hat{x}_2\hat{z}_1}{R_1\hat{x}_1}+\frac{\hat{x}_1\hat{z}_2}{R_2\hat{x}_2}\right)^2},\]
	but in this case the expression becomes
	\begin{align*}
		\pm\lambda & =\pm\ii\sqrt{\frac{\hat{x}_1^2}{R_2^2}(c_1k+1)^2+\frac{\hat{z}_1^2}{R_2^2}(c_1k+1)^2} \\
		& =\pm\frac{\ii(c_1k+1)}{R_2}\sqrt{\hat{x}_1^2+\hat{z}_1^2} \\
		& =\pm\frac{\ii(c_1k+1)}{R_2}.
	\end{align*}
	This implies that the first component of the normal form is $x^2+c'\xi^2$, where $c'$ must be $1$ except if $p\equiv 3\mod 4$, in which case $c'\in\{1,p^2\}$. The eigenvectors have the same form as in case 2: $(\lambda,-\hat{x}_1\hat{x}_2)$. In order to have $c'=1$, we need $a$ such that
	\[a^2=-\lambda^2\Longrightarrow a=\frac{c_1k+1}{R_2}.\]
	The eigenvector condition gives
	\[\frac{2a\lambda}{-2\lambda\hat{x}_1\hat{x}_2}=-\frac{a}{c_1\hat{x}_1^2}=-\frac{c_1k+1}{c_1\hat{x}_1^2R_2}\in\DSq(\Qp,1).\]
	This happens if and only if $k/R_2$ has even order, that is, if and only if $R_1$ has even order. Hence, $c'=1$ in this case and $c'=p^2$ otherwise.
\end{proof}

To sum up, the critical points described at Proposition \ref{prop:rank1}(a) and (c) have two possible normal forms. It is not obvious how many forms can be taken by those in part (b). The answer is that everything that may happen actually happens, as the following result shows.

\begin{proposition}\label{prop:rank1-universal}
	\letpprime. Let $\F$ be the parameter set in \eqref{eq:F}. For each $(t,R_1,R_2)\in \F$, let $F_{t,R_1,R_2}=(J_{t,R_1,R_2},H_{t,R_1,R_2}):\sphere\times\sphere\to\Qp^2$ be the $p$-adic coupled angular momentum system given by expression \eqref{eq:angular}. Let $(x^2+c'\xi^2,\eta)$ be any $p$-adic local normal form of a rank $1$ critical point, where $c'\in X_p$. Then there exist $c\ne 0$ and $(t,R_1,R_2)\in\F$ such that the rank $1$ critical point of $F_{t,R_1,R_2}$ described at Proposition \ref{prop:rank1}(b) for that concrete value of $c$ has $(x^2+c'\xi^2,\eta)$ as a local normal form.
\end{proposition}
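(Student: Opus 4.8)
The plan is to read the normal form directly off the recipe in Proposition~\ref{prop:rank1-type}(3): for the point of type (b) attached to a value $c$, the invariant $c'\in X_p$ is the unique element for which $f(c,k,t)/c'$ is a square in $\Qp$ possessing a square root $r$ with $rR_1(z_1^2-1)\in\DSq(\Qp,c')$. Prescribing $c'$ therefore decouples into two independent problems. First one must arrange that the square class $[f(c,k,t)]\in\Qp^*/(\Qp^*)^2$ equals $[c']$; this already restricts the normal form to the elements of $X_p$ lying in that single square class (one element for the class of $1$, and at most two for each of the remaining classes when $p$ is odd). Second, among the candidates sharing the square class $[c']$ one must select the correct one via the $\DSq$-test.

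I would dispose of the second problem by varying $R_1$. Rescaling $(R_1,R_2)\mapsto(sR_1,sR_2)$ leaves $k=R_2/R_1$, the critical point, and the quantities $z_1,z_2,f(c,k,t)$ untouched, hence fixes the square class $[c']$, while it multiplies the tested quantity $rR_1(z_1^2-1)$ by $s$. Since $r(z_1^2-1)$ is then a fixed nonzero constant and $R_1$ ranges over all of $\Qp^*$, the product meets every square class of $\Qp$; as $\DSq(\Qp,c')$ is, for the relevant non-square $c'$, an index-two subgroup of $\Qp^*$, we may place the tested quantity inside or outside it at will. This toggles which of the (at most two) candidates the recipe selects, so each is realized for a suitable $R_1$. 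For $p=2$ the same mechanism applies, except that $R_1$ must be run through all eight classes of $\Q_2^*/(\Q_2^*)^2$ to separate the eleven elements of $X_2$.

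The core of the argument, and the step I expect to be hardest, is to show that $[f(c,k,t)]$ attains every square class occurring in $X_p$---all four classes when $p$ is odd---while keeping the point defined over $\Qp$, i.e.\ keeping $1-z_1^2$ a sum of two squares (no condition when $p\equiv 1\bmod 4$, even valuation of $1-z_1^2$ when $p\equiv 3\bmod 4$, a norm condition when $p=2$). When $p\equiv 1\bmod 4$ this is immediate: the form $x^2+y^2$ is isotropic, every element is a sum of two squares, and the eigenvalue formula exhibits $f$ as such a sum, so varying $c,t$ makes $[f]$ arbitrary. The unit classes for $p\equiv 3\bmod 4$ are handled the same way by forcing $1-z_1^2$ to be an actual square, so that $f=P^2+Q^2$ with $P,Q\in\Qp$ and the ratio $P/Q$ can be tuned into the square class of $1$ or of $-1$. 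The genuinely delicate targets are the odd-valuation classes ($p$ and $c_0p$, respectively $p$ and $-p$): writing $f=s^2(1-z_1^2)+\beta^2$ with $s^2=(1-t-c)^2(1-t)^2$ and $\beta=(1-t-c)^2z_1+c^2kz_2$, both summands have even valuation, so an odd-valuation $f$ can only arise from a leading-order cancellation $s^2(1-z_1^2)+\beta^2\equiv 0$, whereupon the first surviving term fixes $[f]$.

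To produce such cancellations I would fix $k$ and study $(c,t)\mapsto f(c,k,t)$ in a neighbourhood of the locus $s^2(1-z_1^2)+\beta^2=0$, using a Hensel/continuity argument to show that the valuation and the leading unit of $f$ can be driven to any prescribed value by an arbitrarily small admissible perturbation; the three parameters $c,t,k$ supply more freedom than the single valuation-matching equation together with the single existence constraint consume. A concrete alternative, likely cleanest for $p=2$ with its eleven targets, is simply to exhibit for each desired $c'$ an explicit admissible triple $(c,t,k)$ realizing the required $[f]$ together with a companion $R_1$ fixing the $\DSq$-refinement; the finiteness of $X_p$ makes this a bounded verification.
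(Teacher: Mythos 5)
Your proposal follows the same two-step skeleton as the paper's proof: first arrange the square class $[f(c,k,t)]\in\Qp^*/(\Qp^*)^2$ to equal $[c']$, then separate the at most two elements of $X_p$ lying in that class by the $\DSq$-test of Proposition \ref{prop:rank1-type}(3), toggled by rescaling $(R_1,R_2)\mapsto(sR_1,sR_2)$ with $k=R_2/R_1$ fixed. The second step is exactly the paper's closing argument and is correct: the rescaling preserves $\F$, fixes $z_1,z_2,f,r$, and multiplies the tested quantity by $s$, which can be placed in any square class.

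The gap is in the first step, which is the real content of the proposition and which your proposal plans rather than proves. For $p\equiv 1\bmod 4$ your argument is a non sequitur: that $x^2+y^2$ is universal over $\Qp$ and that $f$ is pointwise a value of a sum of squares says nothing about which square classes the specific rational function $(c,t)\mapsto f(c,k,t)$ attains, since its arguments are not free variables. For the odd-valuation classes, and for all eleven targets when $p=2$, you defer to a ``Hensel/continuity argument'' near the zero locus of $f$, or to explicit admissible triples, neither of which is carried out: you verify neither that the locus $s^2(1-z_1^2)+\beta^2=0$ meets the admissible region ($t\in\Zp$, $\ord_p(k)<0$, $c\ne 0$, and $1-z_1^2$ a sum of two squares so that $x_1,y_1$ exist) nor that $f$ is a submersion there, and a parameter count is not a substitute for either in the $p$-adic setting. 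The paper settles this step with one asymptotic computation: taking $\ord_p(t)=2$, $\ord_p(c)\in\{3,4\}$ and $\ord_p(k)\ll 0$, it shows $f(c,k,t)\approx c^3k^2/(2t^2)$ up to the precision (order and leading digits) that determines the square class, hence $[f]=[c/2]$, and letting $c$ run over elements of order $3$ and $4$ sweeps every class. Without that computation, or a completed version of your perturbation argument, the statement is not established. Two smaller points: your parenthetical count is wrong for $p\equiv 3\bmod 4$, where the class of $1$ contains the two elements $1$ and $p^2$ of $X_p$ (precisely a case your toggle must, and does, handle); and you rightly flag the solvability of $x_1^2+y_1^2=1-z_1^2$ as a constraint --- a point the paper's own proof passes over in silence, and which is genuinely delicate for $p=2$ since $-1$ is not a sum of two squares in $\Q_2$ --- but flagging a constraint is not discharging it.
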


\begin{proof}
	We start by taking $c$ with order $3$ or $4$, $t$ with order $2$, and $k$ with very big absolute value (that is, very low order). Then we have that
	\[z_1\approx \frac{1}{2}\left(\frac{ck}{t}-ckt\right)\approx\frac{ck}{2t}\]
	and
	\[z_2\approx -\frac{1}{2}\left(\frac{t}{1-t-c}+\frac{1-t-c}{t}\right)\approx -\frac{1}{2t},\]
	where $\approx$ means that both sides have the same order and two leading digits. In particular, $z_1$ has very low order and $z_2$ has order $-2$ or $-3$. Now
	\begin{align*}
		f(c,k,t) & =(1-t-c)^2(1-t)^2(1-z_1^2)+((1-t-c)^2z_1+c^2kz_2)^2 \\
		& \approx -z_1^2+(z_1+c^2kz_2)^2 \\
		& = c^2kz_2(2z_1+c^2kz_2) \\
		& \approx -\frac{c^2k}{2t}\left(\frac{ck}{t}-\frac{c^2k}{2t}\right) \\
		& = \frac{c^3k^2}{2t^2}\left(1-\frac{c}{2}\right) \\
		& \approx \frac{c^3k^2}{2t^2}.
	\end{align*}
	The elements of $\Qp$ with the same order and two leading digits are in the same class modulo squares. In particular, we want the class of $c'$ to be the same as that of $f(c,k,t)$, which in turn is the same as $c^3k^2/2t^2$ or $c/2$. The values of $c$ of order $3$ and $4$ cover all possible classes modulo squares, independently of $p$. Hence, the same happens with $c/2$ and with $c'$.
	
	It is left to show that all values of $c'\in X_p$ which are in a fixed class modulo squares can appear in the normal form. There are at most two values $c'\in X_p$ in a fixed class. Hence, both appearing in the normal form is equivalent to saying that, for a fixed $c'$ in that class, $rR_1(1-z_1^2)$ may or may not be in $\DSq(\Qp,c')$. This happens because $r$ and $z_1$ only depends on $c$, $k$ and $t$, and we can vary $R_1$ and $R_2$ arbitrarily while keeping $k$ constant.
\end{proof}

\section{Proof of Theorems \ref{thm:new-main}, \ref{thm:number}, \ref{thm:main}, \ref{thm:number1} and \ref{thm:main1}}\label{sec:proofs}

\subsection*{Proof of Theorem \ref{thm:main}}

Point (i) follows from Corollary \ref{cor:normalform} and Lemmas \ref{lemma:eigenvectors-lambda} and \ref{lemma:eigenvectors-mu}. Point (ii) follows from Corollary \ref{cor:normalform2} and Lemmas \ref{lemma:eigenvectors-lambda2} and \ref{lemma:eigenvectors-mu2} if the parameters $(t,R_1,R_2)$ are in the outer region, Lemma \ref{lemma:eigenvectors3} if they are in the inner region, Lemma \ref{lemma:eigenvectors4} if they are in the limit region and the critical point is non-degenerate, and Lemma \ref{lemma:eigenvectors4-degenerate} if the critical point is degenerate.

\subsection*{Proof of Theorem \ref{thm:number}}

The description of the critical points follows from Proposition \ref{prop:rank0}, and the number of normal forms from Theorem \ref{thm:main}.

\subsection*{Proof of Theorem \ref{thm:main1}}

This follows from Propositions \ref{prop:rank1-type} and \ref{prop:rank1-universal}.

\subsection*{Proof of Theorem \ref{thm:number1}}

The description of the critical points follows from Proposition \ref{prop:rank1}, and the number of normal forms from Theorem \ref{thm:main1}.

\subsection*{Proof of Theorem \ref{thm:new-main}}

The part about the rank $0$ points follows from Theorem \ref{thm:number}, and the part about the rank $1$ points from Theorem \ref{thm:number1}.

\section{Final remarks}\label{sec:final}

\begin{remark}[Computational aspects of this paper]
	The computations we carry out in order to prove Theorems \ref{thm:number}, \ref{thm:main}, \ref{thm:number1}, \ref{thm:main1} are quite tedious, but they would be much more complicated without having the aforementioned general theory in \cite{CrePel-williamson} available. This was not the case for the $p$-adic Jaynes-Cummings model \cite{CrePel-JC}, for which we carried out all computations by hand.
\end{remark}

\begin{remark}[Works on $p$-adic symplectic geometry]\label{rem:works}
	The idea of developing symplectic geometry of integrable systems over the $p$-adic field was formulated in \cite[Section 7]{PVW}. This paper is the sixth of a sequence \cite{PVW,CrePel-JC,CrePel-williamson,CrePel-nonsqueezing,CrePel-Darboux} initiated by Pelayo, Voevodsky and Warren about ten years ago, and it gives a detailed study of the $p$-adic coupled angular momentum, which is the second example of $p$-adic integrable system which we have worked out in detail after the $p$-adic Jaynes-Cummings system \cite{CrePel-JC}. The goal of these works has been to develop a $p$-adic analog of symplectic geometry and of the theory of integrable systems (see Eliashberg \cite{Eliashberg}, Pelayo \cite{Pelayo-hamiltonian,Pelayo-TopApp2023}, Schlenk \cite{Schlenk} and Weinstein \cite{Weinstein-symplectic} for surveys on different aspects of symplectic geometry), with an eye on eventually implementing it using proof assistants in the context of homotopy type theory and Voevodsky's Univalent Foundations \cite{APW,PelWar,PelWar2}. Our goal in this paper is to symplectically classify the critical points of $p$-adic coupled angular momentum. Recent advances in this direction include Gromov's Nonsqueezing Theorem \cite{CrePel-nonsqueezing}, the Weierstrass-Williamson Classification \cite{CrePel-williamson}, the Jaynes-Cummings Model \cite{CrePel-JC}, and Darboux's Theorem \cite{CrePel-Darboux}. This paper is a natural continuation of \cite{CrePel-JC}, but the structure of the normal forms is much more difficult to analyze in the preset paper.
\end{remark}

\appendix
\section{The $p$-adic numbers}\label{sec:appendix}

In this appendix we recall the notions we need about $p$-adic numbers and their extensions.

The \emph{field $\Qp$ of $p$-adic numbers} is given by a completion of the field $\Q$ of rational numbers.

\begin{definition}
	\letpprime. The \emph{$p$-adic order} of an integer number $n$ is the number
	\[\ord_p(n)=\max\{k\in\N:p^k|n\}.\]
	This definition extends to all rational numbers as
	\[\ord_p\left(\frac{m}{n}\right)=\ord_p(m)-\ord_p(n).\]
	The \emph{$p$-adic absolute value of $x\in\Q$} is defined as
	\[|x|_p=p^{-\ord_p(x)}.\]
	The field $\Qp$ is defined as the metric completion of $\Q$ with respect to the $p$-adic absolute value.
\end{definition}

\begin{figure}
	\includegraphics{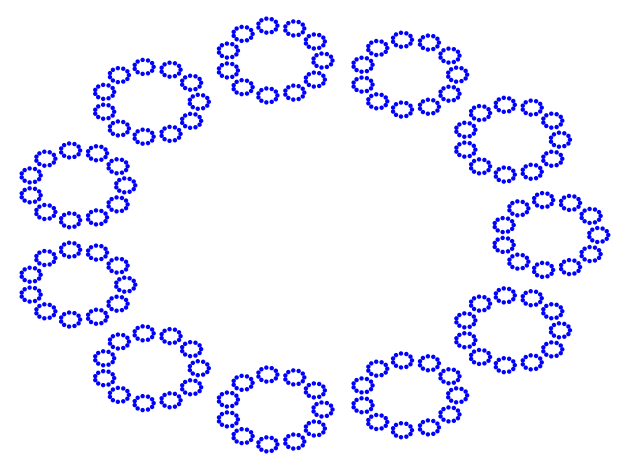}
	\caption{Representation of the $11$-adic numbers. If we consider each point as having radius $1$, then eleven of them have radius $11$, eleven of these balls have radius $11^2$ and all the points in the picture have radius $11^3$.}
	\label{fig:11adics}
\end{figure}

\begin{proposition}
	\letpprime.
	\begin{enumerate}
		\item The $p$-adic order, and with it the $p$-adic absolute value, extend uniquely to $\Qp$.
		\item For $x\in\Qp$, there exist $a_j\in\{0,\ldots,p-1\}$ such that
		\[x=\sum_{j=\ord_p(x)}^\infty a_jp^j\]
		where the sum converges with respect to the $p$-adic absolute value. This expression is known as \emph{$p$-adic expansion of $x$,} and the first coefficient $a_j$ is called \emph{leading digit.}
	\end{enumerate}
\end{proposition}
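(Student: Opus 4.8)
The plan is to treat the two parts separately, using throughout that $\Q$ is dense in $\Qp$ by construction. For part (1), the cleanest route is to observe that the metric on $\Qp$ coming from the completion, say $\bar d$, automatically provides a candidate extension $|x|_p:=\bar d(x,0)$; the content is that this candidate is genuinely an absolute value and that it is the only possible one. First I would record the ultrametric consequence on $\Q$: if $|x-y|_p<|x|_p$ then $|y|_p=|x|_p$, so that $|\cdot|_p$ is locally constant on $\Q\setminus\{0\}$ and continuous at $0$, hence uniformly continuous with respect to the metric it induces. Since $\R_{\ge 0}$ is complete and $\Q$ is dense in $\Qp$, this map extends uniquely to a continuous map on $\Qp$, necessarily agreeing with $\bar d(\cdot,0)$; multiplicativity and the ultrametric inequality then pass to the extension by density and continuity, and $\ord_p$ is recovered as $-\log_p|\cdot|_p$. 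Uniqueness is immediate, since any extension inducing the topology of $\Qp$ must be continuous and is therefore determined on the dense subset $\Q$.

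The first thing to establish for part (2) is that the value group stays discrete, i.e. $|x|_p\in\{p^n:n\in\Z\}$ for every $x\in\Qp\setminus\{0\}$, so that $\ord_p$ takes integer values. This follows from the construction of the extension: taking $x_n\in\Q$ with $x_n\to x\ne 0$, for $n$ large one has $|x_n-x|_p<|x|_p$ and hence $|x|_p=|x_n|_p\in\{p^n\}$. Multiplying $x$ by a suitable power of $p$ reduces everything to the case $x\in\Zp:=\{y\in\Qp:|y|_p\le 1\}$ with $\ord_p(x)=0$; once an expansion of such a unit is obtained, shifting back by the power of $p$ yields the asserted expansion of the original $x$ starting at $j=\ord_p(x)$ with nonzero leading coefficient.

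The technical heart of part (2) is the density of $\Z$ in $\Zp$, which I would prove directly: given $x\in\Zp$ and $N\ge 1$, choose $q=a/b\in\Q$ with $|x-q|_p\le p^{-N}$; then $|q|_p\le 1$ forces $p\nmid b$, so $b$ is invertible modulo $p^N$, and picking an integer $b'$ with $bb'\equiv 1\pmod{p^N}$ the integer $ab'$ satisfies $|q-ab'|_p\le p^{-N}$, whence $|x-ab'|_p\le p^{-N}$. With density in hand I would build the digits inductively: given $x\in\Zp$ there is a unique $a_0\in\{0,\dots,p-1\}$ with $|x-a_0|_p\le p^{-1}$ (take an integer close to $x$ and reduce it modulo $p$), after which $x_1:=(x-a_0)/p$ again lies in $\Zp$ and the procedure repeats. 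The partial sums satisfy $|x-\sum_{j=0}^{N-1}a_jp^j|_p\le p^{-N}\to 0$, so the series converges to $x$. I expect the main obstacle to be purely bookkeeping: ensuring at each stage that the approximating integer can be chosen in the residue range $\{0,\dots,p-1\}$ and that the quotient by $p$ stays in $\Zp$, both of which follow cleanly once the density of $\Z$ in $\Zp$ and the integrality of $\ord_p$ are in place.
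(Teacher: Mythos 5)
The paper offers no proof to compare against here: this proposition sits in Appendix \ref{sec:appendix} as recalled background on $p$-adic numbers and is stated without argument, so your proof stands on its own. It is correct, and it is the standard argument. For part (1), your key observation — that $|x-y|_p<|x|_p$ forces $|y|_p=|x|_p$ — does yield uniform continuity of $|\cdot|_p$ on $\Q$ (equivalently one can quote the reverse triangle inequality $\bigl|\,|x|_p-|y|_p\,\bigr|\le|x-y|_p$, which makes the map $1$-Lipschitz), so the unique continuous extension to the completion exists and agrees with $\bar d(\cdot,0)$; your reading of ``uniquely'' as uniqueness among extensions compatible with the topology of $\Qp$ is the intended one, and multiplicativity and the ultrametric inequality indeed pass to the limit. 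For part (2), the chain you set up — discreteness of the value group, reduction to units of $\Zp$, density of $\Z$ in $\Zp$ via inverting the denominator modulo $p^N$, then greedy digit extraction with $|x-\sum_{j=0}^{N-1}a_jp^j|_p\le p^{-N}$ — is complete and is exactly the textbook route. Two small bookkeeping points you should make explicit: the step ``$|q|_p\le 1$ forces $p\nmid b$'' requires $q=a/b$ to be written in lowest terms (or at least with denominator prime to $p$); and the claim that the leading coefficient $a_{\ord_p(x)}$ is nonzero, which the statement implicitly needs, deserves its one-line justification — for a unit $x\in\Zp$ one has $|x-a_0|_p\le p^{-1}<1=|x|_p$, hence $|a_0|_p=1$ and $a_0\ne 0$. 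Neither point is a gap, just the bookkeeping you yourself anticipated.
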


\begin{proposition}
	\letpprime. The $p$-adic order and the $p$-adic absolute value extend uniquely to the algebraic closure of $\Qp$, but now the order is not necessarily an integer.
\end{proposition}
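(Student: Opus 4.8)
The plan is to reduce the statement to the case of finite extensions of $\Qp$ and then pass to a direct limit. First I would recall that the algebraic closure $\overline{\Qp}$ is the union of all finite extensions $K/\Qp$ sitting inside a fixed algebraic closure, so it suffices to produce a \emph{compatible} family of extensions of $\ord_p$ to each such $K$ together with a uniqueness statement at each finite level; compatibility plus uniqueness then glue these into a single extension to $\overline{\Qp}$.

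For a finite extension $K/\Qp$ of degree $n=[K:\Qp]$, I would define the extension by the norm formula
\[
\ord_p(\alpha)=\frac{1}{n}\,\ord_p\big(\mathrm{N}_{K/\Qp}(\alpha)\big),\qquad |\alpha|_p=|\mathrm{N}_{K/\Qp}(\alpha)|_p^{1/n},
\]
for $\alpha\in K^\times$. Multiplicativity is immediate from multiplicativity of the field norm, and agreement with the original $\ord_p$ on $\Qp$ follows because $\mathrm{N}_{K/\Qp}(a)=a^n$ for $a\in\Qp$. The genuinely nontrivial point is the ultrametric inequality $|\alpha+\beta|_p\le\max\{|\alpha|_p,|\beta|_p\}$. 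The cleanest route I would take is to show that the valuation ring $\ocal_K=\{\alpha\in K:|\alpha|_p\le 1\}$ coincides with the integral closure of $\Zp$ in $K$; since $\Zp$ is integrally closed and the integral closure is a ring, this forces $\ocal_K$ to be closed under addition, which is exactly the ultrametric inequality. This identification rests on the fact that an element is integral over $\Zp$ precisely when its minimal polynomial has coefficients in $\Zp$, combined with the relation between $\mathrm{N}_{K/\Qp}(\alpha)$ and the constant term of that minimal polynomial.

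The uniqueness on each finite $K$ is where completeness of $\Qp$ enters, and this is the step I expect to be the main obstacle. I would invoke the structural fact that any two absolute values on the finite-dimensional $\Qp$-vector space $K$ which restrict to $|\cdot|_p$ on $\Qp$ are equivalent as norms, because over a complete field all norms on a finite-dimensional vector space induce the same topology. Two absolute values inducing the same topology satisfy $|\cdot|_1=|\cdot|_2^{\,s}$ for some $s>0$; restricting to $\Qp$, where $|\cdot|_p$ is nontrivial, forces $s=1$, hence the two extensions coincide. Uniqueness on finite extensions immediately gives compatibility (the unique extension to a larger field restricts to the unique extension on any smaller subfield), so the valuations glue to a well-defined $\ord_p$ on $\overline{\Qp}$, and any extension to $\overline{\Qp}$ must restrict to these unique finite-level extensions, giving uniqueness there as well.

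Finally, to see that the order need not be an integer, I would exhibit a single explicit element: in $K=\Qp(p^{1/e})$ with $e\ge 2$, applying multiplicativity to $(p^{1/e})^e=p$ gives $e\cdot\ord_p(p^{1/e})=\ord_p(p)=1$, hence $\ord_p(p^{1/e})=1/e\notin\Z$. This shows the value group of $\ord_p$ on $\overline{\Qp}$ contains every $1/e$ and is therefore all of $\Q$, which completes the statement.
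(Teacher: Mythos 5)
The paper itself never proves this proposition: it appears in Appendix~\ref{sec:appendix} as recalled background on $p$-adic numbers and is stated without proof, so there is no in-paper argument to compare yours against. What you propose is the standard textbook proof, and its architecture is correct: the norm formula $|\alpha|_p=|\mathrm{N}_{K/\Qp}(\alpha)|_p^{1/[K:\Qp]}$ on each finite extension, the ultrametric inequality via identifying the closed unit ball with the integral closure of $\Zp$ in $K$, uniqueness via equivalence of norms on finite-dimensional vector spaces over a complete field, gluing over the directed union of finite subextensions, and $\ord_p(p^{1/e})=1/e$ to see that the value group becomes all of $\Q$.

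One step is thinner than your write-up suggests, and your accounting of where completeness is used is off. The identification of $\{\alpha\in K:|\alpha|_p\le 1\}$ with the integral closure of $\Zp$ does \emph{not} follow from the two facts you cite. They give the easy direction (integral $\Rightarrow$ minimal polynomial in $\Zp[x]$ $\Rightarrow$ norm in $\Zp$), but the direction needed for the ultrametric inequality is the converse: if the constant term $a_0$ of the monic irreducible minimal polynomial lies in $\Zp$, then \emph{all} of its coefficients lie in $\Zp$, hence $\alpha$ is integral. That is a separate lemma, proved with Hensel's lemma or a Newton polygon argument, and it is precisely there that completeness of $\Qp$ enters the \emph{existence} half of the theorem, not only the uniqueness half as you claim. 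Indeed, over the non-complete field $\Q$ with $\ord_5$ the lemma fails: $f(x)=x^2-\frac{6}{5}x+1$ is monic irreducible with unit constant term but a non-integral middle coefficient, and for a root $\alpha$ the norm formula gives $|\alpha|=1$ yet $|1+\alpha|=|f(-1)|_5^{1/2}=|16/5|_5^{1/2}=5^{1/2}>1$, so the norm formula (which in the non-complete case is a geometric mean of several inequivalent extensions of the valuation) is not even an absolute value. With that Hensel-type lemma inserted, your argument is complete and correct.
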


\begin{definition}
	\letpprime. The metric completion of the algebraic closure of $\Qp$ with respect to the extension of the $p$-adic absolute value to this closure is denoted by $\Cp$, the \emph{complex $p$-adic numbers}.
\end{definition}

\begin{proposition}
	\letpprime\ such that $p\equiv 3\mod 4$. Let $\ii=\sqrt{-1}$. For $x\in\Qp[\ii]$, $\ord_p(x)\in\Z$ and there exist $a_j,b_j\in\{0,\ldots,p-1\}$ such that
	\[x=\sum_{j=\ord_p(x)}^\infty (a_j+\ii b_j)p^j,\]
	where the sum converges with respect to the $p$-adic absolute value extended to $\Qp[\ii]$. The first coefficient $a_j+\ii b_j$ is called \emph{leading digit.}
\end{proposition}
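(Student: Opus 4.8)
The plan is to realize $\Qp[\ii]$ as the unramified quadratic extension of $\Qp$ and to read the claimed expansion off the structure of its ring of integers. Since $p\equiv 3\mod 4$, the number $-1$ is not a quadratic residue modulo $p$, so $x^2+1$ is irreducible both over $\Qp$ and over $\mathbb{F}_p$; hence $\Qp[\ii]$ is a genuine degree-two extension of $\Qp$. To prove that $\ord_p$ remains integer-valued, I would use the norm formula $\ord_p(x)=\frac{1}{2}\ord_p(N(x))$, where $N(a+\ii b)=a^2+b^2$. The key elementary lemma is that $a^2+b^2$ always has even $p$-adic order for $a,b\in\Qp$ not both zero: if $\ord_p(a)\ne\ord_p(b)$ the minimum dominates and the order is twice it, while if $\ord_p(a)=\ord_p(b)=m$ we factor out $p^{2m}$ and the resulting unit parts $u,v$ satisfy $u^2+v^2\not\equiv 0\pmod p$ --- otherwise $(u/v)^2\equiv-1$, contradicting that $-1$ is a non-residue. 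Thus $\ord_p(x)\in\Z$ for all $x\in\Qp[\ii]$.

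Next I would record the residue structure. The ring of integers of $\Qp[\ii]$ is $\Zp[\ii]$, with maximal ideal $p\Zp[\ii]$ and residue field $\Zp[\ii]/p\Zp[\ii]\cong\mathbb{F}_p[\ii]=\mathbb{F}_{p^2}$. Because $\{a+\ii b:a,b\in\{0,\ldots,p-1\}\}$ is a complete set of representatives for $\mathbb{F}_{p^2}$, every element of $\Zp[\ii]$ is congruent modulo $p$ to a unique member of this set.

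With these two facts the expansion follows by the same digit-extraction procedure used for ordinary $p$-adic numbers. Given $x\in\Qp[\ii]$ with $m=\ord_p(x)$, the element $x/p^m$ is a unit of $\Zp[\ii]$; I would recursively take $a_j+\ii b_j$ to be the unique residue representative of the current remainder, subtract it, and divide by $p$, obtaining
\[x=\sum_{j=m}^{\infty}(a_j+\ii b_j)p^j,\]
with $a_j,b_j\in\{0,\ldots,p-1\}$. Convergence is immediate since the $j$-th term has order at least $j\to\infty$, and uniqueness of the leading coefficient $a_m+\ii b_m$ (which makes the notion of leading digit well defined) comes from uniqueness of the residue representative modulo $p$. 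The only substantive step is the integer-order claim; once $\ord_p$ is known to avoid half-integers, the remaining bookkeeping is transported verbatim from the expansion over $\Qp$.
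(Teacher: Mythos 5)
Your proof is correct. Note that the paper itself states this proposition in its appendix on $p$-adic numbers as a recalled standard fact, with no proof given, so there is no argument of the authors to compare against; your write-up supplies a sound, self-contained justification. The one observation worth making is that your key lemma --- that $\ord_p(a^2+b^2)$ is always even when $p\equiv 3\bmod 4$ --- does essentially all the work: besides giving integrality of the extended valuation via the norm formula, it yields $\ord_p(a+\ii b)=\min(\ord_p(a),\ord_p(b))$, which is what justifies your assertions that the ring of integers of $\Qp[\ii]$ is exactly $\Zp[\ii]$, that a unit of $\Qp[\ii]$ of order $m$ divided by $p^m$ lies in $\Zp[\ii]^\times$, and that the $p^2$ elements $a+\ii b$ with $a,b\in\{0,\dots,p-1\}$ are pairwise incongruent modulo $p\Zp[\ii]$ and hence represent the residue field $\mathbb{F}_{p^2}$. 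With those facts in place, the digit-extraction recursion and the convergence and uniqueness claims are, as you say, transported verbatim from the case of $\Qp$, and the leading digit $a_m+\ii b_m$ is automatically nonzero because $x/p^m$ is a unit.
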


\begin{definition}\label{def:norm}
	\letnpos. \letpprime. We define the \emph{$p$-adic norm of a vector} $v=(v_1,\ldots,v_n)\in(\Cp)^n$ as
	\[\|v\|_p=\max_{1\le i\le n}|v_i|_p.\]
\end{definition}

\section{Review of the real coupled angular momentum}\label{sec:real}

A \emph{symplectic manifold} $(M,\omega)$ is a smooth manifold $M$ endowed with a closed non-degenerate $2$-form $\omega$. If $M$ is $4$-dimensional, an \emph{integrable system} is given by two functions $f_1,f_2$ which are defined on $(M,\omega)$ which Poisson-commute at any point and whose differentials are linearly independent almost everywhere in $M$. We often say that the map itself $F:(M,\omega)\to\R^2$ is the \emph{integrable system.}

\begin{theorem}[{\cite[Lemmas 2.1, 2.4, Propositions 2.5, 2.6, 2.7]{LeFPel}}]\label{thm:real}
	Let $(x_1,y_1,z_1,x_2,y_2,z_2)$ be coordinates on the product $\mathrm{S}^2\times\mathrm{S}^2$ and let $\omega_1$ and $\omega_2$ be the standard symplectic form on the first and second factor of this product, respectively. Endow $\mathrm{S}^2\times\mathrm{S}^2$ with the symplectic form $R_1\omega_1\oplus R_2\omega_2$. Let $t,R_1,R_2\in\R$ such that $0\le t\le 1$ and $R_2>R_1>0$ and define the coupled angular momentum system
	$F_{t,R_1,R_2}:\mathrm{S}^2\times\mathrm{S}^2\to\R^2$ by
	\[F_{t,R_1,R_2}(x_1,y_1,z_1,x_2,y_2,z_2)=\Big(J_{t,R_1,R_2}(x_1,y_1,z_1,x_2,y_2,z_2),H_{t,R_1,R_2}(x_1,y_1,z_1,x_2,y_2,z_2)\Big),\]
	where
	\begin{equation}\label{eq:angular-real}
		\left\{\begin{aligned}
			J_{t,R_1,R_2}(x_1,y_1,z_1,x_2,y_2,z_2) & =R_1z_1+R_2z_2; \\
			H_{t,R_1,R_2}(x_1,y_1,z_1,x_2,y_2,z_2) & =(1-t)z_1+t(x_1x_2+y_1y_2+z_1z_2).
		\end{aligned}\right.
	\end{equation}
	Let $t^+$ and $t^-$ be given by
	\[\frac{R_2}{2R_2+R_1\mp 2\sqrt{R_1R_2}}.\]
	Then the following hold.
	\begin{enumerate}
		\item $F_{t,R_1,R_2}:\mathrm{S}^2\times\mathrm{S}^2\to\R^2$ is an integrable system.
		\item The map $F_{t,R_1,R_2}$ has exactly four critical points $P,Q,S,T$, of rank $0$:
		\[\begin{cases}
			P = (0, 0, 1, 0, 0, 1), & F_{t,R_1,R_2}(P) = (R_1 +R_2, 1), \\
			Q = (0, 0, -1, 0, 0, 1), & F_{t,R_1,R_2}(Q) = (R_2 -R_1, -1), \\
			S = (0, 0, 1, 0, 0, -1), & F_{t,R_1,R_2}(S) = (R_1 -R_2, 1 - 2t), \\
			T = (0, 0, -1, 0, 0, -1), & F_{t,R_1,R_2}(T) = (-(R_1 +R_2), 2t - 1).
		\end{cases}\]
		\item The critical point $S$ of $F_{t,R_1,R_2}$ is non-degenerate of focus-focus type if $t^- < t < t^+$,
		degenerate if $t \in {t^-, t^+}$, and non-degenerate of elliptic-elliptic type otherwise.
		\item For every $t \in [0, 1]$, the critical points $P$, $Q$ and $T$ of $F_{t,R_1,R_2}$ are non-degenerate of elliptic-elliptic type.
		\item When $t \notin \{0, 1\}$, the critical points of rank one of $F_{t,R_1,R_2}$ are the points \[(x_1, y_1, z_1, x_2, y_2, z_2) \in
		\mathrm{S}^2 \times \mathrm{S}^2\] for which there exists $\lambda \in \R \setminus \{0, \frac{1-t}{R_1}\}$ such that
		\[\left\{\begin{aligned}
			(x_2, y_2) & = \frac{1-t-R_1\lambda}{R_2\lambda}(x_1, y_1); \\
			z_1 & =\frac{(t^2+(R_2)^2\lambda^2)(1-t-R_1\lambda)^2-t^2(R_2)^2\lambda^2}{2tR_2\lambda(1-t-R_1\lambda)^2}; \\
			z_2 & =\frac{(t^2-(R_2)^2\lambda^2)(1-t-R_1\lambda)^2-t^2(R_2)^2\lambda^2}{2t(R_2)^2\lambda^2(1-t-R_1\lambda)},
		\end{aligned}\right.\]
		and which are different from the points $P,Q,S,T$ introduced earlier. When $t = 1$, the critical
		points of $F_{t,R_1,R_2}$ of rank one are either the points \[(x_1, y_1, z_1, x_2, y_2, z_2) \in \mathrm{S}^2 \times \mathrm{S}^2 \setminus \{P,Q,S,T\}\] such that
		$(x_2, y_2, z_2) = \pm(x_1, y_1, z_1)$ or those for which there exists $\lambda \ne 0$ such that
		\[\left\{\begin{aligned}
			(x_2,y_2) & =-\frac{R_1}{R_2}(x_1,y_1); \\
			z_1 & =\frac{(R_1)^2-(R_2)^2+(R_1)^2(R_2)^2\lambda^2}{2(R_1)^2R_2\lambda}; \\
			z_2 & =\frac{(R_2)^2-(R_1)^2+(R_1)^2(R_2)^2\lambda^2}{2R_1(R_2)^2\lambda}.
		\end{aligned}\right.\]
		When $t = 0$, the critical points of corank one of $F$ are the points of the form $(0, 0, \pm1, x_2, y_2, z_2)$ with
		$(x_2, y_2, z_2) \ne (0, 0, \pm1)$ or of the form $(x_1, y_1, z_1, 0, 0, \pm1)$ with $(x_1, y_1, z_1) \ne (0, 0, \pm1)$.
	\end{enumerate}
\end{theorem}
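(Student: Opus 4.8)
The plan is to transcribe, over $\R$, the computations already performed for the $p$-adic system, replacing the arithmetic of square classes in $\Qp$ by the analysis of the \emph{sign} of a single discriminant in $\R$. For part (1) I would reproduce the real analogs of Lemma \ref{lemma:poisson-sphere} and Proposition \ref{prop:poisson-cam}: the brackets on each sphere factor are $\{x_i,y_i\}=z_i/R_i$ (and cyclically), and the identical cancellation gives $\{J_{t,R_1,R_2},H_{t,R_1,R_2}\}=0$; linear independence of $\dd J$ and $\dd H$ on a dense set holds because the dependency locus is a proper algebraic subset, hence of measure zero. Part (2) is the real analog of Proposition \ref{prop:rank0}: $\dd J=0$ forces $z_1,z_2\in\{\pm1\}$, producing exactly $P,Q,S,T$, with images obtained by substitution.

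The heart is parts (3)--(4). I would compute the Hessians $M_J,M_H$ at each pole exactly as in Proposition \ref{prop:hessians}, then examine the eigenvalues of $A=\Omega^{-1}M_H$ following Lemma \ref{lemma:eigenvectors}. Its characteristic polynomial factors as $(x^2+\ii Bx+C)(x^2-\ii Bx+C)=x^4+(2C+B^2)x^2+C^2$, with $B=k(tz_2-t+1)+tz_1$ and $C=kt(t-1)z_1$, where $k=R_2/R_1$. Viewed as a quadratic in $x^2$, the two roots have product $C^2>0$ and discriminant $B^2(B^2+4C)$, so the Williamson type is governed entirely by the sign of $B^2+4C$. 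When $B^2+4C<0$ the two values of $x^2$ are complex conjugate, yielding a genuine complex quadruple $\pm a\pm \ii b$, i.e.\ \emph{focus-focus}; when $B^2+4C>0$ they are real and, since one checks $2C+B^2>0$ in each case, both negative, giving two purely imaginary pairs, i.e.\ \emph{elliptic-elliptic}; the degenerate locus is $B^2+4C=0$.

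A direct expansion then separates the poles. At $P$, $Q$ and $T$ one obtains manifestly positive expressions (for instance $(k-t)^2+4kt^2$ at $P$, $(k+t)^2-4kt^2$ at $Q$, which is positive on $[0,1]$ by concavity and endpoint evaluation, and $k^2(1-2t)^2+t^2+2kt$ at $T$), so these three points are elliptic-elliptic for every $t$, with the endpoints $t\in\{0,1\}$ handled by a separate linear combination exactly as in Section \ref{sec:01}. At $S$ the same computation gives $B^2+4C=k^2(1-2t)^2+t^2-2kt$, and solving the boundary equation $B^2+4C=0$ for $t$ produces precisely $t=t^{\pm}=\frac{k(2k+1\pm2\sqrt{k})}{4k^2+1}=\frac{R_2}{2R_2+R_1\mp2\sqrt{R_1R_2}}$, so $S$ is focus-focus exactly for $t^-<t<t^+$ and elliptic-elliptic (resp.\ degenerate) outside (resp.\ on the boundary).

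Finally, part (5) follows Proposition \ref{prop:rank1}: rank one means $\dd H=c\,\dd J$ for a scalar $c$, and solving the induced relations among $\dd x_i,\dd y_i,\dd z_i$ modulo the sphere constraints $x_i\dd x_i+y_i\dd y_i+z_i\dd z_i=0$ yields the stated families, with the parameter $\lambda$ playing the role of $c/R_1$ and the three regimes $t\in(0,1)$, $t=1$, $t=0$ treated as in cases 2b, 2a and 1 of that proof. The main obstacle is the type analysis at $S$: one must factor the quartic correctly, track the signs of both $x^2$-roots to distinguish focus-focus from elliptic-elliptic (and rule out any hyperbolic block), and execute the algebra identifying the boundary roots with the closed form $t^{\pm}$; everything else is a routine real transcription of the $p$-adic arguments.
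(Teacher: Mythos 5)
You should first note what you are comparing against: the paper does not prove Theorem \ref{thm:real} at all. It is stated in the appendix as a review of the \emph{real} coupled angular momentum, quoted verbatim with the citation \cite[Lemmas 2.1, 2.4, Propositions 2.5--2.7]{LeFPel}; the body of the paper only proves the $p$-adic analogs. So your proposal is not paralleling an internal proof but reconstructing one, and your plan of transcribing the paper's $p$-adic computations (Lemma \ref{lemma:poisson-sphere}, Propositions \ref{prop:poisson-cam}, \ref{prop:rank0}, \ref{prop:hessians}, Lemma \ref{lemma:eigenvectors}, Proposition \ref{prop:rank1}) to $\R$ is the natural route. The computations you exhibit are correct: with $B=k(tz_2-t+1)+tz_1$ and $C=kt(t-1)z_1$ the characteristic polynomial is $x^4+(2C+B^2)x^2+C^2$, the discriminants at the four poles are $(k-t)^2+4kt^2$ at $P$, $(k+t)^2-4kt^2$ at $Q$, $k^2(1-2t)^2+t^2+2kt$ at $T$, $k^2(1-2t)^2+t^2-2kt$ at $S$, and solving the last expression for $t$ does give $t^{\pm}=k(2k+1\pm2\sqrt{k})/(4k^2+1)=R_2/(2R_2+R_1\mp2\sqrt{R_1R_2})$. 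The missing verification $2C+B^2>0$ also holds, e.g.\ at $P$ and $S$ it follows from $C=kt(t-1)\le 0$ via $2C+B^2=(B^2+4C)-2C\ge B^2+4C$.

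The genuine gap is that you determine the Williamson type and (non)degeneracy from the eigenvalues of $\Omega^{-1}\dd^2H$ \emph{alone}, while both are properties of the whole pencil $a_1\dd^2J+a_2\dd^2H$ and can only be read off a \emph{generic} element of it. Your dichotomy ``$B^2+4C<0$ gives a complex quadruple, $B^2+4C>0$ gives imaginary pairs'' silently assumes $B\neq 0$; if $B=0$ the quartic degenerates to $(x^2+C)^2$ and the eigenvalues are double regardless of the sign of $C$. This is not a vacuous worry: at $S$ one has $B=k(1-2t)+t=0$ at $t=k/(2k-1)\in(t^-,t^+)$, where the eigenvalues of $\Omega^{-1}\dd^2H$ are \emph{real} and doubly repeated --- your criterion would suggest hyperbolic blocks or simply fail to conclude, even though the point is focus-focus; similarly $B=0$ at $T$ for $t=k/(2k+1)$. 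The paper's own $p$-adic argument hits exactly this wall and resolves it at the end of the proof of Lemma \ref{lemma:eigenvectors3}, by switching to the combination $\dd^2(H_{t,R_1,R_2}-tz_1J_{t,R_1,R_2})$, whose eigenvalues are again distinct; you need the real analog of that step. Relatedly, your sentence ``the degenerate locus is $B^2+4C=0$'' asserts degeneracy at $t\in\{t^-,t^+\}$ from the degeneracy of one matrix, but degeneracy of the critical point requires that \emph{every} linear combination has repeated eigenvalues (equivalently, that the span is not a Cartan subalgebra); this is what the paper checks in the $p$-adic case in Lemma \ref{lemma:eigenvectors4-degenerate} by computing the characteristic polynomial of $cJ_{t,R_1,R_2}+H_{t,R_1,R_2}$ for arbitrary $c$ and showing it is a perfect square. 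With these two repairs (generic combinations where $B=0$, and an all-combinations argument at $t^{\pm}$), your outline becomes a complete proof; parts (1), (2) and (5) are fine as sketched, modulo writing out the change of parameter between the paper's $c$ and the theorem's $\lambda$.
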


\end{document}